\newcommand\ovl[1]{\overline{#1}}
\newcommand{\eps}{\varepsilon}
\newcommand{\Z}{\mathbb{Z}}
\newcommand{\C}{\mathbb{C}}
\newcommand{\Q}{\mathbb{Q}}
\newcommand{\N}{\mathbb{N}}
\newcommand{\R}{\mathbb{R}}
\def\P{\mathbb{P}}
\def\O{\mathcal{O}}
\newcommand{\ra}{\rightarrow}
\newcommand{\lra}{\longrightarrow}
\newcommand{\xra}{\xrightarrow}
\newcommand{\xlra}[1]{-\hspace{-0.2cm}\xra{\hspace{-0.15cm}#1}}
 \newtheorem{thm}{Theorem}[section]
 \newtheorem{cor}[thm]{Corollary}
 \newtheorem{lem}[thm]{Lemma}
 \newtheorem{prop}[thm]{Proposition}
 \theoremstyle{definition}
 \newtheorem{defn}[thm]{Definition}
 \newtheorem{ex}[thm]{Example}
 \theoremstyle{remark}
 \newtheorem{rem}[thm]{Remark}
 \newtheorem{notation}[thm]{Notation}
 \newtheorem{constr}[thm]{Construction}
 \numberwithin{equation}{subsection}
\definecolor{zielony}{rgb}{0.5, 0.9, 0.1}
\definecolor{czerwony}{rgb}{0.9, 0.2, 0.1}
\definecolor{niebieski}{rgb}{0.3, 0.1, 0.9}
\newcommand{\Cl}{\operatorname{Cl}}
\newcommand{\Pic}{\operatorname{Pic}}
\newcommand{\Spec}{\operatorname{Spec}}
\newcommand{\Hom}{\operatorname{Hom}}
\newcommand{\Ext}{\operatorname{Ext}}
\newcommand{\codim}{\operatorname{codim}}
\newcommand{\coker}{\operatorname{coker}}
\newcommand{\ul}[1]{\underline{#1}}
\newcommand{\scx}{\mathrm{Spec}(\mathrm{Cox}(X))}
\newcommand{\diag}{\operatorname{diag}}
\newcommand{\gq}{/\hspace{-3pt}/\hspace{1pt}}
\newcommand{\Cox}{\mathrm{Cox}}
\def\GL{G\!L}
\def\SL{S\!L}
\def\BD{B\!D}
\def\BI{B\!I}
\begin{document}

\title[Cox rings of minimal resolutions of surface quotient singularities]
 {Cox rings of minimal resolutions\\ of surface quotient singularities}

\author{ Maria Donten-Bury }

\address{Instytut Matematyki UW, Banacha 2, PL-02097 Warszawa}

\email{marysia@mimuw.edu.pl}

\subjclass[2010]{14E15, 14L30, 14M25}

\keywords{Cox ring, quotient singularity, minimal resolution, toric variety}

\date{May 22, 2013.}

\thanks{This research was supported by a grant of Polish MNiSzW (N N201 611 240).}

\begin{abstract}
We investigate Cox rings of minimal resolutions of surface quotient singularities and provide two descriptions of these rings. The first one is the equation for the spectrum of a Cox ring, which is a hypersurface in an affine space. The second is the set of generators of the Cox ring viewed as a subring of the coordinate ring of a product of a torus and another surface quotient singularity. In addition, we obtain an explicit description of the minimal resolution as a divisor in a toric variety.
\end{abstract}

\maketitle


\section{Introduction}

Let $X$ be a normal (pre)variety with finitely generated class group. The Cox ring (or the total coordinate ring) of $X$ is a $\Cl(X)$-graded module
$$\Cox(X) = \bigoplus_{[D] \in \Cl(X)} \Gamma(X, \mathcal{O}_X(D))$$
with multiplication as in the field of rational functions on $X$, given by a choice of lifting of divisor classes. Different choices of representatives of divisor classes lead to isomorphic ring structures. A very important question is whether the Cox ring of a variety is finitely generated.

One can describe this object from a geometric point of view if only $\Cox(X)$ is finitely generated. Assume $\Pic(X)$ is torsion-free and consider the action of the Picard torus of $X$
$$T = \mathrm{Hom}(\Pic(X), \C^*)$$ on $\scx$. Then $X$ can be obtained as a geometric quotient of an open subset of $\scx$ by $T$. Thus the Cox ring contains a lot of information on the geometry of $X$ -- the variety is determined by $\scx$ and some combinatorial data in its grading group (see e.g. \cite{surveyCox}).

In this paper we study the case of minimal resolutions of surface quotient singularities (over $\C$), i.e. $\!\!X$ is a minimal resolution of the quotient space $\C^2/G$ for a finite subgroup $G \subset GL(2,\C)$. In this case $\Cl(X) = \Pic(X)$ is torsion-free (see Proposition~\ref{prop_pic_is_lattice}).

The main results of this paper are two descriptions of the Cox rings of minimal resolutions of surface quotient singularities: in terms of a single equation for its spectrum (Theorem~\ref{main_thm}), and also by a (finite) set of generators, as a subring of the coordinate ring of the product of the Picard torus and a singular surface (Theorem~\ref{theorem_parametrization}). While the first of these theorems is related to the results of~\cite{compl1} and can be proven using the ideas presented there, the second one introduces a new method of describing the Cox ring, designed to work in the case of quotient singularities. Thus the present paper can be thought of as a first step towards understanding the total coordinate rings of resolutions of quotient singularities in general. The ideas described in its final part possibly can be generalized and applied to higher-dimensional singularities.
As we attempt to develop methods that could work also in more complex cases, we do everything step by step, performing quite a lot of computations, checking details and providing examples.

An important motivation for extending this work is the possibility of presenting $X$ as a geometric quotient of an open set of $\scx$ in case where $\Cox(X)$ is finitely generated. Roughly speaking, if one finds a way to understand the Cox ring of a (hypothetical) resolution $X$ of a (quotient) singularity, based only on some restricted knowledge of the geometry of $X$, one may be able to construct some new resolutions as geometric quotients of open sets of $\scx$. An especially interesting case is the one of 4-dimensional symplectic quotient singularities and their symplectic resolutions. The potential results may be used in the work on a generalization of the classical Kummer construction, investigated in~\cite{AW} and~\cite{mgr}.

The first attempt to study Cox rings of resolutions of quotient singularities is a recent paper~\cite{FGAL}, where the authors find the single relation of $\Cox(X)$ where $X$ is the minimal resolution of a Du Val singularity (i.e. $G \subset SL(2,\C)$). However, their methods rely heavily on the equations of an embedding of the singularity in an affine space, and consequently their work seems to be very hard to generalize in a straightforward manner. Cox rings of minimal resolutions of all surface quotient singularities can be computed using the theory of varieties endowed with a (diagonal) torus action such that its biggest orbits are of codimension one, see~\cite{compl1}. However, these results also do not apply to singularities in higher dimensions.

\subsection{Outline of the paper}
Throughout the paper $X$ denotes the minimal resolution of a surface quotient singularity $\C^2/G$, where $G$ is a finite subgroup of $GL(2,\C)$.

In section~\ref{background} we recall basic information on the set-up: properties of finite (small) subgroups of $GL(2,\C)$ and the structure of the special fibre of their minimal resolution (after~\cite{Brieskorn}). Then, in section~\ref{Pic_torus_action}, we define an action of the Picard torus $T$ of $X$ on an affine space which will become the ambient space for $\scx$. We investigate the properties of this action in the toric setting and describe the quotient as a toric variety. In section~\ref{candidate} a candidate $S$ for $\scx$ is proposed. It is defined as a $T$-invariant hypersurface in an affine space. Section~\ref{section_resolution_toric} contains a description of a certain geometric quotient of an open subset of $S$ by the action of $T$ as a divisor in a toric variety. We show that it is the minimal resolution of $\C^2/G$. This may seem to be a roundabout way of reproving the results of Brieskorn~\cite{Brieskorn}. However, we are planning to use the ideas developed in this work in cases of higher dimensional quotient singularities, where resolutions do not have such a detailed description, and try to reverse the process: construct resolutions of quotient singularities from their Cox rings. In section~\ref{spectrum_cox_ring} we give the proof of the first of our main results, Theorem~\ref{main_thm}, which states that $S$ is the spectrum of the Cox ring of $X$. The proof is based on \cite[Thm.~6.4.3]{CoxRings}, the GIT characterization of the Cox ring.

The last section contains the second main result, summarized in Theorem~\ref{theorem_parametrization}. It is a description of $\Cox(X)$ in terms of its generators, as a subring of $\C[x,y]^{[G,G]}\otimes \C[t_0^{\pm 1},\ldots,t_{n-1}^{\pm 1}]$, where $[G,G]$ denotes the commutator subgroup of~$G$. We hope that the last part of the paper will be the basis for generalizing these results to higher dimensions.

\subsection*{Acknowledgements}

This paper is a part of the author's PhD thesis completed at the University of Warsaw. The author would like to thank her advisor Jaros\l{}aw Wi\'sniewski for inspiring discussions on the topic of this work, a lot of help (and patience) while preparing this paper, and for all the beautiful mathematics she has learned from him during the PhD studies.

Also, thanks to Micha\l{} Laso\'n and V\'ictor Gonz\'alez Alonso for answering the questions regarding their work~\cite{FGAL} and for the comments on the preliminary version of this paper.


\section{The background material}\label{background}

This section starts with the list of groups for which we consider the quotient singularity $\C^2/G$. We describe the minimal resolution of these singularities. We also compute commutator subgroups and abelianizations of considered groups, which will be needed in the sequel, especially in section~\ref{sect_generators}.

\subsection{Groups}\label{subsection_groups}

We investigate the singularities constructed by taking the quotient of $\C^2$ by the linear action of a finite subgroup of $GL(2,\C)$. Such a quotient either is smooth or has an isolated singularity in $0$. However, it is worth noting that in higher dimensions the singular locus of a quotient of an affine space by a finite linear group action can be much more complicated.

The Chevalley-Shephard-Todd theorem states that the ring of invariants of such a group action is a polynomial ring if and only if the group is generated by pseudo-reflections, i.e. linear transformations of dimension $n$ which have 1 as an eigenvalue with multiplicity $n-1$ (see e.g.~\cite[Section 2.4]{AlgInv}). Due to this result we can restrict ourselves to considering \emph{small groups}, that is groups without pseudo-reflections. Finite small subgroups of $GL(2,\C)$ are classified and listed e.g. in~\cite[Satz~2.9]{Brieskorn} and in~\cite{Riemenschneider}. It is worth noting that conjugacy classes of the non-cyclic small subgroups of $GL(2,\C)$ coincide with their isomorphism classes. Before listing the groups we recall the notation for the fibre product cases, repeated after~\cite{Brieskorn}.

By $\mu\colon \GL(2,\C)\times \GL(2,\C)\ra \GL(2,\C)$ we denote the matrix multiplication.

\begin{notation}\label{definition_fibre_product}
Take $H_1, H_2 \subset \GL(2,\C)$ with normal subgroups $N_1$ and $N_2$ respectively, such that there is an isomorphism $\phi: H_1/N_1 \ra H_2/N_2$. By $[h_i]$ we denote the class of $h_i\in H_i$ in $H_i/N_i$. We will consider the image under $\mu$ of the fibre product of $H_1$ and $H_2$ over $\phi$:
$$(H_1, N_1; H_2, N_2)_{\phi} = \mu(\{(h_1, h_2)\in H_1\times H_2\colon [h_2] = \phi([h_1])\}).$$
If the choice of~$\phi$ is obvious, it will be denoted by $(H_1, N_1; H_2, N_2)$.
\end{notation}

Throughout the text we use the usual notation $\eps_n = e^{2\pi i /n}$.

\begin{prop}{\rm \cite[Satz~2.9]{Brieskorn}}\label{prop_group_list}
The conjugacy classes of finite small subgroups of $\GL(2,\C)$ are:
\begin{enumerate}
\item cyclic groups $C_{n,q} = \langle \diag(\eps_n, \eps_n^q) \rangle$, where $C_{n,q}$ is conjugate to $C_{n,q'}$ if and only if $q=q'$ or $qq' \equiv 1 \mod n$,
\item non-cyclic groups contained in $\SL(2,\C)$:
\begin{itemize}
  \item binary dihedral groups $\BD_{n}$ ($4n$ elements, $n\geq 2$, gives the Du Val singularity $D_{n+2}$),
  \item binary tetrahedral group $BT$ ($24$ elements, Du Val singularity $E_6$),
  \item binary octahedral group $BO$ ($48$ elements, Du Val singularity $E_7$),
  \item binary icosahedral group $\BI$ ($120$ elements, Du Val singularity $E_8$),
 \end{itemize}
\item images under $\mu$ of fibre products of a group in $\SL(2,\C)$ and a cyclic group $Z_k = C_{k,1} =\diag(\eps_k, \eps_k)$ contained in the center of $\GL(2,\C)$:
\begin{itemize}
 \item $\BD_{n,m}$ for $(m,n)=1$, defined as $(Z_{2m},Z_{2m};\BD_n,\BD_n)$ for odd $m$ and $(Z_{4m},Z_{2m};\BD_n,C_{2n})$, where $C_{2n} \vartriangleleft \BD_n$ is cyclic of order $2n$, when $m$ is even,
 \item $BT_m$ defined as $(Z_{2m},Z_{2m};BT,BT)$ in the cases where $(m,6)=1$ and as $(Z_{6m},Z_{2m};BT,\BD_2)$ when $(m,6)=3$,
 \item $BO_m = (Z_{2m},Z_{2m};BO,BO)$ if $(m,6)=1$,
 \item $\BI_m = (Z_{2m},Z_{2m};\BI,\BI)$ if $(m,30)=1$.
\end{itemize}
Generators of each of these groups can be found in~\cite{Riemenschneider}.\\
Note that for $m=1$ we obtain the subgroups of $\SL(2,\C)$ listed above.
\end{enumerate}
\end{prop}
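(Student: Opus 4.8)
The statement is a classical classification (Brieskorn's Satz~2.9), and the plan is to recover it from the structure of finite subgroups of the unitary group. First I would replace $G$ by a conjugate lying in $U(2)$: averaging an arbitrary Hermitian inner product over $G$ yields a $G$-invariant one, so every finite $G\subset\GL(2,\C)$ is conjugate to a subgroup of $U(2)$, and smallness is invariant under conjugation. The key tool is then the matrix-multiplication map $\mu$ of Notation~\ref{definition_fibre_product}: restricted to the product of $SU(2)$ with the group of scalar matrices it is a surjection onto $U(2)$ with kernel $\{(I,I),(-I,-I)\}$. Pulling $G$ back along this map gives a finite subgroup $\wt G$ of $SU(2)\times(\text{scalars})$ whose image under $\mu$ is $G$.

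Next I would apply Goursat's lemma to the subgroup $\wt G$ of this direct product. Denoting by $H_1,H_2$ its two projections and by $N_1\vartriangleleft H_1$, $N_2\vartriangleleft H_2$ the kernels of the two partial projections, Goursat's lemma produces an isomorphism $\phi\colon H_1/N_1\ra H_2/N_2$ for which $\wt G$ is the fibre product over $\phi$; applying $\mu$ shows $G=(H_1,N_1;H_2,N_2)_\phi$ exactly in the sense of Notation~\ref{definition_fibre_product}. Here $H_2$ is a finite cyclic group of scalars, while $H_1\subset SU(2)$ is one of the groups in the ADE classification of finite subgroups of $SU(2)$ -- cyclic, binary dihedral $\BD_n$, or the exceptional groups $BT$, $BO$, $\BI$ -- which in turn follows by passing to the double cover $SU(2)\ra SO(3)$ and classifying finite rotation groups. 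This pins down the possible factors and produces the skeleton of the list.

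It then remains to impose smallness and extract the arithmetic conditions. An element $\lambda A$ with $A\in SU(2)$ has eigenvalues $\lambda\alpha$ and $\lambda\alpha^{-1}$, where $\alpha,\alpha^{-1}$ are those of $A$; it is a pseudo-reflection precisely when exactly one of these equals $1$. Demanding that no nontrivial element of $G$ be of this form restricts the admissible gluings and yields the coprimality conditions in the statement -- $(m,n)=1$ for the dihedral families, $(m,6)\in\{1,3\}$ for $BT_m$, and $(m,30)=1$ for $\BI_m$ -- and forces the splitting of $\BD_{n,m}$ according to the parity of $m$. The cyclic case is direct: $C_{n,q}=\gen{\diag(\eps_n,\eps_n^q)}$ contains a pseudo-reflection unless $(n,q)=1$, and two small cyclic groups $C_{n,q}$, $C_{n,q'}$ are conjugate iff some generator $\diag(\eps_n^a,\eps_n^{aq})$ with $(a,n)=1$ has the same unordered eigenvalue pair as $\diag(\eps_n,\eps_n^{q'})$, i.e. iff $q\equiv q'$ or $qq'\equiv 1\pmod n$.

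I expect the principal difficulty to lie in the conjugacy bookkeeping rather than in any isolated computation. One must verify that the list is both exhaustive and irredundant; that for the non-cyclic groups conjugacy in $\GL(2,\C)$ coincides with abstract isomorphism, so that the Goursat data $(H_1,N_1;H_2,N_2,\phi)$ determine $G$ up to the ambiguities already recorded; and that the exceptional fibre-product families are neither omitted nor counted twice. Carrying this out type by type for $H_1$, while simultaneously tracking the scalar factor $H_2$ and the admissible isomorphisms $\phi$, is the delicate core of the argument.
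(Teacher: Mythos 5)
The paper offers no proof of this statement: it is quoted as background directly from Brieskorn's Satz~2.9, so there is no in-paper argument to compare yours against. Your outline is nonetheless the standard route to this classification and is essentially the one underlying the cited source: unitarize by averaging, lift through the double cover $SU(2)\times U(1)\to U(2)$, apply Goursat's lemma to present $G$ as a fibre product $(H_1,N_1;H_2,N_2)_\phi$ in the sense of Notation~\ref{definition_fibre_product}, invoke the ADE classification of finite subgroups of $SU(2)$, and then impose smallness. Two places where the sketch is thinnest deserve mention. First, the abelian case is not quite ``direct'': you must also rule out non-cyclic small abelian groups, which is again a Goursat-type step --- a finite diagonal subgroup with a nontrivial partial kernel contains $\diag(1,\eps_q)$, a pseudo-reflection, so smallness forces the group to be the graph of an automorphism of $\mu_n$ and hence cyclic of the form $C_{n,q}$ with $(n,q)=1$. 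Second, the passage from ``no element $\lambda A$ is a pseudo-reflection'' to the precise arithmetic conditions ($(m,n)=1$; the two distinct fibre-product structures for $\BD_{n,m}$ and $BT_m$ according to parity of $m$ and to $\gcd(m,6)$; $(m,30)=1$), together with exhaustiveness and irredundancy of the resulting list, is the actual bulk of Brieskorn's theorem; you correctly identify this as the delicate core but defer it, so what you have is a sound strategy rather than a complete proof.
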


In what follows, by abuse of notation, we most often identify conjugacy classes of subgroups of $\GL(2,\C)$ and their representatives from the list in Proposition~\ref{prop_group_list}.

Quotients by cyclic groups are toric singularities. The structure of their Cox rings, which are just polynomial rings, is well known. For the details we refer to~\cite[Chapter 5]{ToricBook}) and in what follows we consider only quotients by non-cyclic groups.

Generators of finite small subgroups of $\SL(2,\C)$ are given e.g.~in~\cite{duval}. A simple computation, performed for example in~\cite{gap}, allows to prove the following lemma.

\begin{lem}\label{lemma_commutators}
The commutator subgroups and the abelianizations of finite small subgroups of $\SL(2,\C)$ are:
\begin{itemize}
\item $[\BD_n,\BD_n] \simeq \Z_n$, it is generated by $\diag(\eps_n, \eps_n^{-1})$, $Ab(\BD_n)$ is $\Z_2\times \Z_2$ if $n$ is even and $\Z_4$ for odd $n$,
\item $[BT,BT] = \BD_2$, $Ab(BT) \simeq \Z_3$,
\item $[BO,BO] = BT $, $Ab(BO) \simeq \Z_2$,
\item $[\BI,\BI] = \BI$, $Ab(\BI) = 1$.
\end{itemize}
\end{lem}

\begin{lem}\label{commutators}
The commutator subgroup of a small subgroup $G \subset \GL(2,\C)$ from the list in Proposition~\ref{prop_group_list}~(3) is the same as the commutator subgroup of the non-cyclic factor of the corresponding fibre product structure given in Proposition~\ref{prop_group_list}.
\end{lem}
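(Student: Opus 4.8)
The plan is to exploit the fact that in every fibre product of Proposition~\ref{prop_group_list}~(3) the cyclic factor $Z_k$ consists of the scalar matrices $\eps_k I$ and therefore lies in the centre of $\GL(2,\C)$. Write
$P = \{(h_1,h_2)\in H_1\times H_2 : [h_2] = \phi([h_1])\}$
for the fibre product of Notation~\ref{definition_fibre_product}, so that $G = \mu(P)$; here $H_1 = Z_k$ is the cyclic factor and $H_2 \in \{\BD_n, BT, BO, \BI\}$ is the non-cyclic one. Although $\mu$ is not a homomorphism on all of $\GL(2,\C)\times\GL(2,\C)$, I would first check that its restriction $\mu|_P\colon P \to G$ \emph{is} a group homomorphism: for $(a_1,a_2),(b_1,b_2)\in P$ one has $\mu\bigl((a_1,a_2)(b_1,b_2)\bigr) = a_1 b_1 a_2 b_2$ while $\mu(a_1,a_2)\,\mu(b_1,b_2) = a_1 a_2 b_1 b_2$, and these agree precisely because $b_1\in Z_k$ is central and so commutes with $a_2$. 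By construction $\mu|_P$ is surjective onto $G$ (this incidentally reconfirms that $G$ is a group).

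Next I would compute $[P,P]$ inside $H_1\times H_2$. Since $H_1 = Z_k$ is abelian, every commutator in $H_1\times H_2$ has trivial first coordinate, whence $[P,P]\subseteq\{1\}\times[H_2,H_2]$. For the reverse inclusion, note that $\phi$ is an \emph{isomorphism} $H_1/N_1 \to H_2/N_2$, in particular surjective; hence for each $h_2\in H_2$ there is some $h_1\in H_1$ with $[h_2] = \phi([h_1])$, i.e. $(h_1,h_2)\in P$. Thus any $a_2,b_2\in H_2$ arise as second coordinates of elements of $P$, so $(1,[a_2,b_2]) = [(a_1,a_2),(b_1,b_2)]\in[P,P]$, giving $[P,P] = \{1\}\times[H_2,H_2]$.

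Finally, a surjective homomorphism carries the commutator subgroup onto the commutator subgroup, so
$$[G,G] = \mu|_P\bigl([P,P]\bigr) = \mu\bigl(\{1\}\times[H_2,H_2]\bigr) = [H_2,H_2],$$
which is exactly the commutator subgroup of the non-cyclic factor $H_2$, as claimed (combined with Lemma~\ref{lemma_commutators} this even pins down the subgroup explicitly in each case). Equivalently, one can bypass the homomorphism language and argue directly: writing $g = h_1 h_2$ and $g' = h_1' h_2'$ with $h_1,h_1'$ central, the scalar parts cancel in $gg'g^{-1}g'^{-1}$ and one gets $[g,g'] = [h_2,h_2']$, after which the same surjectivity of $\phi$ shows these commutators exhaust $[H_2,H_2]$. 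I do not expect a genuine obstacle here: the only points needing care are the multiplicativity of $\mu|_P$ (equivalently, the cancellation of scalars in the commutator), which is exactly where centrality of $Z_k$ enters, together with the surjectivity of $\phi$ required to realise all of $[H_2,H_2]$.
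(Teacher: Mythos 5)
Your proposal is correct and is essentially the paper's own argument: the paper proves the lemma directly by writing $g=h_1h_2$, $g'=h_1'h_2'$ with $h_1,h_1'$ central so that $[g,g']=[h_2,h_2']$, and then uses the surjectivity onto $H_2$ built into the fibre product to get the reverse inclusion --- exactly the two points you isolate. Your homomorphism-of-fibre-products packaging ($\mu|_P$ multiplicative, $[P,P]=\{1\}\times[H_2,H_2]$) is just a slightly more formal dressing of the same idea, and your closing ``equivalently'' paragraph reproduces the paper's proof verbatim in spirit.
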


\begin{proof}
Let $G =(H_1,N_1;H_2,N_2)$ such that $H_1$ is in the center of $\GL(2,\C)$. Take $g, g' \in G$ and let $g = h_1h_2$, $g' = h_1'h_2'$ where $h_i, h_i' \in H_i$. Then $gg'g^{-1}g'^{-1} = h_2h_2'h_2^{-1}h_2'^{-1}$, so $[G,G] \subseteq [H_2,H_2]$. They are equal, since by Definition~\ref{definition_fibre_product} for every $h_2 \in H_2$ there exists some $h_1\in H_1$ such that $h_1h_2 \in G$.
\end{proof}

Now the abelianizations of considered groups can be computed. Their isomorphism types are given in the last column of the table in~\cite[Satz~2.11]{Brieskorn}. However, the proof of Proposition~\ref{sing_quot} requires knowing generators of $Ab(G)$, hence we list them below (written as matrices in $GL(2,\C)$ whose classes generate $G/[G,G]$). To describe these generators we use
$$B= \left(\begin{array}{cc}
0 & 1 \\
-1 & 0 \\
\end{array} \right)
\qquad \hbox{and} \qquad
C = \frac{1}{2} \left(\begin{array}{cc}
1+i & -1+i \\
1+i & 1-i \\
\end{array} \right).$$

\begin{cor}\label{abelianizations}
Abelianizations of finite small subgroups of $\GL(2,\C)$ are
\begin{itemize}
\item if $n$ is even, $Ab(\BD_{n,m}) \simeq \Z_{2m} \times \Z_2$ is generated by $\eps_{2m}\cdot B$ and $\diag(\eps_{2n}, \eps_{2n}^{-1})$,
\item if $n$ is odd, $Ab(\BD_{n,m}) \simeq \Z_{4m}$ is generated by $\eps_{4m}\cdot B$ for $m$ even and $\eps_{2m}\cdot B$ for $m$ odd,
\item $Ab(BT_m) \simeq \Z_{3m}$ is generated by
\begin{itemize}
\item $\eps_{2m} \cdot C$ if $(m,6) = 1$,
\item $\eps_{6m} \cdot C$ if $(m,6) = 3$,
\end{itemize}
\item $Ab(BO_m) \simeq \Z_{2m}$ is generated by $\eps_{2m}\cdot \diag(\eps_8, \eps_8^{-1})$,
\item $Ab(\BI_m) \simeq \Z_m$ is generated by $\diag(\eps_m, \eps_m)$.
\end{itemize}
\end{cor}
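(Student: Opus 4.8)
The plan is to compute each abelianization $Ab(G) = G/[G,G]$ directly, using the fibre product structure from Proposition~\ref{prop_group_list}~(3) together with the commutator computations already in hand. By Lemma~\ref{commutators} the commutator subgroup $[G,G]$ equals $[H_2,H_2]$, the commutator of the non-cyclic $SL_2$-factor, whose explicit generators are recorded in Lemma~\ref{lemma_commutators}. So in each case I know both $G$ and $[G,G]$ explicitly, and the quotient is a finite abelian group whose order is $|G|/|[G,G]|$. The isomorphism type is already asserted in \cite[Satz~2.11]{Brieskorn}; the real content here is exhibiting an explicit matrix (or matrices) whose class generates the cyclic factor(s), since these generators are what the proof of Proposition~\ref{sing_quot} will require.

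First I would treat the simplest pattern, the odd cases $BO_m$ and $BI_m$ (and $BT_m$). Here $G = (Z_{2m},Z_{2m};H,H)$ with $H$ one of $BO, BT, BI$, so every element of $G$ has the form $\eps_{2m}^j \cdot h$ for $h \in H$, and modulo $[G,G]=[H,H]$ the class is determined by the scalar part together with the image of $h$ in $Ab(H)$. Using $Ab(BI)=1$ I get $Ab(BI_m)\simeq Z_{2m}/(\pm 1)\simeq Z_m$, generated by $\diag(\eps_m,\eps_m)$; for $BO$, since $Ab(BO)\simeq \Z_2$ is generated by the class of a matrix conjugate to $\diag(\eps_8,\eps_8^{-1})$, the scalar and the $\Z_2$-part combine into a single cyclic group $\Z_{2m}$ generated by $\eps_{2m}\cdot\diag(\eps_8,\eps_8^{-1})$ (one checks the orders multiply because $\gcd(m,2)=1$). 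The $BT_m$ case is the same idea with $Ab(BT)\simeq\Z_3$ and the two congruence conditions on $m$ selecting the scalar factor $Z_{2m}$ or $Z_{6m}$, again combining with $\Z_3$ to give $\Z_{3m}$, with generator $\eps_{2m}\cdot C$ or $\eps_{6m}\cdot C$.

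The binary dihedral cases $\BD_{n,m}$ are the main obstacle, because the answer splits on the parity of both $n$ and $m$, and because $Ab(\BD_n)$ is itself noncyclic ($\Z_2\times\Z_2$) for even $n$, so I must track which generators survive. For odd $n$, $Ab(\BD_n)\simeq\Z_4$ is cyclic; combining with the central scalar factor (either $Z_{2m}$ for odd $m$ or $Z_{4m}$ with $N=Z_{2m}$ for even $m$) and computing the order of $\eps_{2m}B$ or $\eps_{4m}B$ gives the single cyclic group $\Z_{4m}$. For even $n$, I keep the $\Z_2\times\Z_2$ from $Ab(\BD_n)$ and the scalar $\Z_{2m}$; the subtle point is identifying which $\Z_2$ merges with the scalars and which survives as an independent factor, yielding $\Z_{2m}\times\Z_2$ generated by $\eps_{2m}B$ (order $2m$) and $\diag(\eps_{2n},\eps_{2n}^{-1})$ (order $2$ modulo $[\BD_n,\BD_n]=\langle\diag(\eps_n,\eps_n^{-1})\rangle$).

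Concretely, the verification reduces in every case to two routine checks: that the proposed generator lies in $G$ (i.e. satisfies the fibre-product condition $[h_2]=\phi([h_1])$ of Notation~\ref{definition_fibre_product}), and that its order in $G/[G,G]$ is exactly the claimed cyclic order. Both follow from the explicit matrix entries of $B$, $C$, the diagonal generators, and the relations defining $\BD_n$, $BT$, $BO$, $BI$; the coprimality hypotheses $(m,n)=1$, $(m,6)=1$ or $3$, $(m,30)=1$ guarantee that the scalar order and the $Ab(H)$-order are coprime, so the product of cyclic groups is again cyclic of the stated order (except in the even-$n$ dihedral case, where the extra $\Z_2$ persists). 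I would present the dihedral computation in full and indicate the others as analogous, since the three polyhedral families follow the identical scalar-times-$Ab(H)$ template.
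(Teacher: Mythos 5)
Your proposal follows essentially the same route as the paper: compute $|Ab(G)|$ from the fibre-product structure (which forces identifying the $\Z_2$ kernel of $\mu$, i.e.\ $\eps_{2m}^j h = \eps_{2m}^{j+m}(-h)$), then verify that the proposed matrix has exactly that order modulo $[G,G]$, treating the even-$n$ dihedral case separately by exhibiting two commuting generators whose $\Z_2$-factor does not merge into the cyclic part. One small caveat: your blanket appeal to coprimality of the scalar order and $|Ab(H_2)|$ fails for $\BD_{n,m}$ with $n$ odd, where $Ab(\BD_n)\simeq\Z_4$ and $\gcd(2m,4)=2$, but your stated fallback of directly computing the order of $\eps_{2m}B$ or $\eps_{4m}B$ against $|Ab(G)|$ --- which is exactly what the paper does in every cyclic case --- covers it.
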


\begin{proof} Let $G = (H_1, N_1; H_2, N_2)$, where $H_1$ is a cyclic group generated by $\diag(\eps_{2k}, \eps_{2k})$ and $H_2$ a subgroup of $\SL(2,\C)$.

We start from computing the order of $Ab(G)$. The order of $[G,G]$ is known by Lemma~\ref{lemma_commutators}, so we only have to determine the order of $G$. Look at the kernel of
$$\mu \colon \{(h_1, h_2)\in H_1\times H_2\colon [h_2] = \phi([h_1])\} \ra \GL(2,\C).$$
Take $0 < i \leq 2k$ and $M \in \SL(2,\C)$ such that $\eps_{2k}^i \cdot M = 1$. Then $\diag(\eps_{2k}^{-i}, \eps_{2k}^{-i})$ is in $\SL(2,\C)$, which is possible only if $i=k$ or $i=2k$, i.e. $M = \diag(-1,-1)$ or $M = \diag(\eps_{2k}^{-2k}, \eps_{2k}^{-2k}) = 1$. Since for any considered group $G$ both these pairs of matrices are in the fibre product (which can be checked directly), the kernel is always $\Z_2$. Thus we have the formula, which is compatible with the formulation of the corollary:
$$|Ab(G)| = |N_1|\cdot |N_2| \cdot (|H_1| / |N_1|)/(|\ker \mu| \cdot |[G,G]|) = |H_1|\cdot |N_2| / (2 |[G,G]|).$$

Now in all cases but the first one it suffices to say that the order of the element given in the formulation of the corollary modulo $[G,G]$ is in fact equal to the order of $Ab(G)$, which is straightforward.
In the case of $G = \BD_{n,m}$ for $n$ even the number $m$ must be odd, so we have $G = (Z_{2m},Z_{2m};\BD_n,\BD_n)$. Then $\eps_{2m}\cdot B$ is of order $2m$ and $\diag(\eps_{2n}, \eps_{2n}^{-1})$ has order 2 modulo $[G,G]$. The commutator of these elements is $\diag(\eps_{2n}^{-2}, \eps_{2n}^2) \in [G,G] \simeq \Z_n = \langle (\diag(\eps_{2n}, \eps_{2n}^{-1}))^2\rangle$, hence their classes in $Ab(G)$ commute. Moreover, $(\eps_{2m}\cdot B)^m = -B^m$ and $-B^m\cdot \diag(\eps_{2n}, \eps_{2n}^{-1}) \notin [G,G]$, so the element of order 2 is not in the subgroup generated by $\eps_{2m}\cdot B$. Thus in fact $Ab(G) \simeq \Z_{2m} \times \Z_2$.
\end{proof}

\subsection{Resolution of singularities}\label{resolution_intro}
The singularities we consider are rational, so the exceptional divisor of the minimal resolution is a tree of smooth rational curves, meeting transversally. More precisely, for the surface quotient singularities it is a tree with three chains of curves attached to a central component, that is the dual graph is a T-shaped diagram (see Fig.~\ref{figure_tree_shaped}). For small subgroups of $SL(2,\C)$ they are Dynkin diagrams of the root systems $D_n$ for $n\geq 4$, $E_6$, $E_7$ and $E_8$. In this case all rational curves in the exceptional fibre have self-intersection $(-2)$. For considered groups not contained in $SL(2,\C)$ the diagrams do not have to be Dynkin diagrams any more, and also the self-intersection numbers can be less then $(-2)$, see Examples~\ref{example_products_with_D4} and~\ref{example_big_diagram}. The structure of the exceptional divisors for small subgroups of $GL(2,\C)$ is described in detail e.g. in~\cite{Brieskorn} and~\cite{Riemenschneider}. Here we recall some facts which will be useful in what follows. First of all we fix some notation.

For a chosen small group $G \subset GL(2,\C)$ we will denote by $X$ the minimal resolution of the quotient singularity $\C^2/G$ (it is unique, as we consider only the surface case). We describe the special fibre of the resolution $X \ra \C^2/G$.

\begin{notation}\label{notation_exc_curves}
Let $E_0$ be the curve corresponding to the branching point of the diagram and $E_{i,j}$ be the $j$-th curve in the $i$-th branch, counting from $E_0$, as in Fig.~\ref{figure_tree_shaped}. We assume that the first branch always has the smallest length.

\begin{figure}[h]
\vspace{1.5cm}
\vbox to 0ex{\vss\centerline{\includegraphics[width=0.75\textwidth]{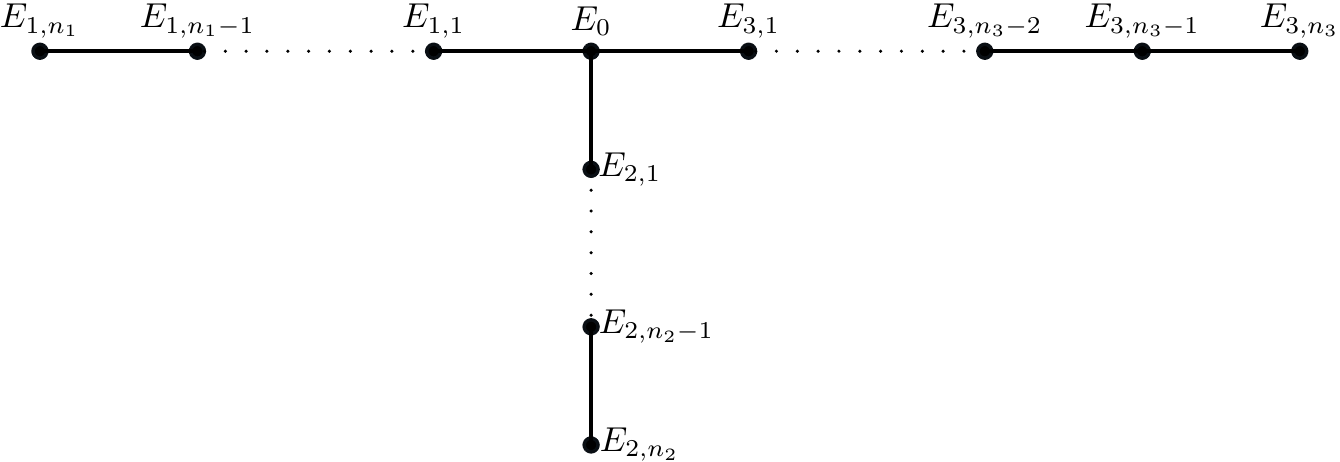}}\vss}
\vspace{1.5cm}
\caption{\small Dual graph of the special fibre of the minimal resolution of $\C^2/G$}
\label{figure_tree_shaped}
\end{figure}

If we need to write these curves in a sequence, we order them as follows:
$$E_0; E_{1,1},\ldots, E_{1,n_1}; E_{2,1},\ldots, E_{2,n_2}; E_{3,1},\ldots, E_{3,n_3}.$$
The number of irreducible components of the special fibre is $n = n_1+n_2+n_3+1$.
\end{notation}

From the rationality of the resolution we have $E_{i,j}\cdot E_{k,l} = 1$ (and $E_0 \cdot E_{i,j} = 1$) if these curves are adjacent, and 0 if they are different and not adjacent. Hence we only have to describe the self-intersection numbers $E_{i,j}\cdot E_{i,j}$ and $E_0\cdot E_0$.

\begin{defn}\label{definition_invariant_7}
We will denote by $$\langle d;\: p_1, q_1;\: p_2, q_2;\: p_3, q_3\rangle,$$
an invariant consisting of seven integers, which contains full information about the intersection numbers of components of the exceptional divisor of the minimal resolution of $\C^2/G$ for a non-cyclic small group $G\subset \GL(2,\C)$.
We will be using the following information:
\begin{itemize}
\item $d = -E_0\cdot E_0$,
\item the $j$-th entry of the expansion of $p_i/q_i$ into the Hirzebruch-Jung continued fraction is equal to $-E_{i,j}\cdot E_{i,j}$  (hence the length of a branch is the length of the corresponding continued fraction),
\item the exact rule how to restore these numbers from the group structure description can be found in~\cite[Satz~2.11]{Brieskorn}.
\end{itemize}
\end{defn}

Broadly speaking, these numbers are connected to the fibre product description of the group structure (see Proposition~\ref{prop_group_list}). This follows from the construction of the resolution of $\C^2/G$ based on the well-understood minimal resolutions for the subgroups of $\SL(2,\C)$; for the details we refer to~\cite{Brieskorn}.

\begin{ex}\label{example_products_with_D4} Quotients by $\BD_{2,m}$.
The simplest case is $\BD_{2,1} = \BD_2 \subset \SL(2,\C)$, which gives the Du Val singularity $D_4$, whose dual graph has three branches of length 1 (see Fig.~\ref{figure_dual_graphs_BD}). As $(m,n)=1$, other cases are $(\Z_{2m}, \Z_{2m}; \BD_2, \BD_2)$ where $m$ is odd. Using the notation of \cite[Satz~2.11]{Brieskorn}, the minimal resolution for $\BD_{2,m}$ is described by the sequence $\langle -\frac{m+3}{2};\: 2, 1;\: 2, 1;\: 2, 1 \rangle$. Thus it turns out that the dual graphs of these resolutions are the same as for $\BD_2$, but the self-intersection number in the branching point changes: for $\BD_{2,m}$ it is $-\frac{m+3}{2}$.

\begin{figure}[h]
\vspace{0.5cm}
\vbox to 0ex{\vss\centerline{\includegraphics[width=0.65\textwidth]{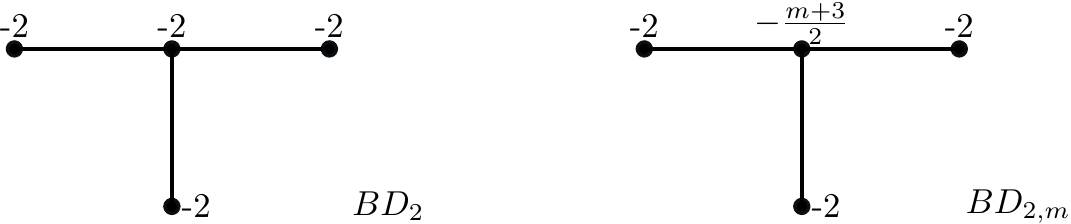}}\vss}
\vspace{0.75cm}
\caption{\small Dual graphs of exceptional divisors of minimal resolutions of $\C^2/\BD_2$ and $\C^2/\BD_{2,m}$}
\label{figure_dual_graphs_BD}
\end{figure}
\end{ex}

\begin{ex}\label{example_big_diagram}
Starting from larger binary dihedral groups and taking the fibre product with a suitable cyclic group one can obtain resolutions much different from the Du Val case. For example, for $\BD_{23,39}$ the minimal resolution is described by the sequence $\langle d;\: 2, 1;\: 2, 1;\: 23, q\rangle$, where, according to the rule in~\cite[Satz~2.11]{Brieskorn}, $39 = 23(d-1) - q$. Thus $d=3$ and $q = 7$, the continued fraction describing the last branch is
$$\frac{23}{7} = 4 - \frac{1}{2-\frac{1}{2-\frac{1}{3}}}$$
and the dual graph (much smaller than the one for $\BD_{23}$) is as in Fig.~\ref{figure_dual_graph_big}.

\begin{figure}[h]
\vspace{0.5cm}
\vbox to 0ex{\vss\centerline{\includegraphics[width=0.65\textwidth]{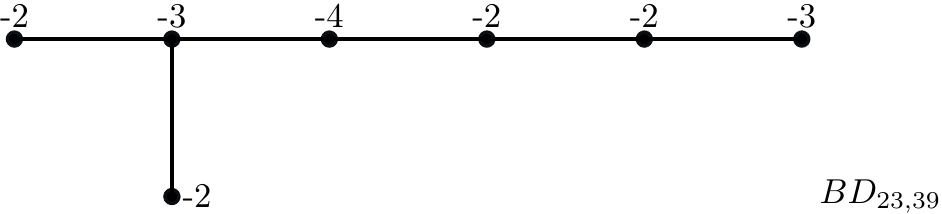}}\vss}
\vspace{0.75cm}
\caption{\small Dual graph of exceptional divisors of minimal resolution of $\C^2/\BD_{23,39}$}
\label{figure_dual_graph_big}
\end{figure}
\end{ex}

Based on the intersection numbers of curves in the exceptional divisor of the resolution we define a matrix $U$ which will be called an \emph{extended intersection matrix} for the singularity $\C^2/G$.

We start from the intersection matrix $U^0$ of the components of the exceptional divisor. The curves are ordered as stated in Notation~\ref{notation_exc_curves}, so $U^0_{k,l}$ is the intersection number of the $k$-th and $l$-th curve in the sequence. We extend $U^0$ to a matrix $U$ by adding three columns: for $i=1,2,3$, just after the column corresponding to $E_{i,n_i}$, we add a column filled with 0 except of the entry corresponding to $E_{i,n_i}$, where we put 1. In fact, adding these columns corresponds to choosing three rational functions on $\C^2/G$, which are elements of the Cox ring of the singularity itself, and including them in a generating set of the Cox ring of the minimal resolution. This attitude will be used and explained in detail in section~\ref{sect_generators}.

This construction will be used to define an action of a torus on the (candidate for the) spectrum of the Cox ring of the minimal resolution, which will be introduced in section~\ref{Pic_torus_action}.

\begin{notation}\label{notation_def_U}
Throughout the paper we think of $U$ as if it was divided in several blocks:

$$\left(
  \begin{array}{c|c|c|c|c|c|c}
    -d & \begin{array}{cccc} 1 & 0 & \ldots & 0 \end{array} & 0 & \begin{array}{cccc} 1 & 0 & \ldots & 0 \end{array} & 0 & \begin{array}{cccc} 1 & 0 & \ldots & 0 \end{array} & 0 \\
    \hline
    \begin{array}{c} 1 \\ 0 \\ \vdots \\ 0 \end{array} & A_1 & \begin{array}{c} 0 \\ \vdots \\ 0 \\ 1 \end{array} & 0 & 0 & 0 & 0 \\
    \hline
    \begin{array}{c} 1 \\ 0 \\ \vdots \\ 0 \end{array} & 0 & 0 & A_2 & \begin{array}{c} 0 \\ \vdots \\ 0 \\ 1 \end{array} & 0 & 0 \\
    \hline
    \begin{array}{c} 1 \\ 0 \\ \vdots \\ 0 \end{array} & 0 & 0 & 0 & 0 & A_3 & \begin{array}{c} 0 \\ \vdots \\ 0 \\ 1 \end{array} \\
  \end{array}
\right)$$

The block denoted by $A_i$ is the matrix of intersection numbers of components in the $i$-th branch of the exceptional divisor:

$$A_i = \left(
          \begin{array}{cccccc}
            -a_{i,1} & 1 & 0 & 0 & 0 & 0 \\
            1 & -a_{i,2} & 1 & 0 & 0 & 0 \\
            0 & 1 & -a_{i,3} & 1 & 0 & 0 \\
            \ldots & \ldots & \ldots & \ldots & \ldots & \ldots\\
            0 & 0 & 0 & 1 & -a_{i,n_i-1} & 1\\
            0 & 0 & 0 & 0 & 1 & -a_{i,n_i}\\
          \end{array}
        \right)
$$

On the diagonal of $A_i$ there is the sequence of the negatives of entries of the Hirzebruch-Jung continued fraction associated with the $i$-th branch of the exceptional divisor. This means that $A_i$ is just the intersection matrix of the components of the exceptional divisor of the minimal resolution of a certain cyclic quotient singularity (see~\cite{HJquot} and~\cite{Brieskorn}). This singularity will appear later in the toric picture of the considered situation in section~\ref{toric_structure_quotient}.
\end{notation}

In fact, results of Brieskorn give more restrictions for the description of the exceptional divisor of the minimal resolution. In particular, not all T-shaped diagrams can appear as dual graphs. It turns out that one branch always has length one and also the second one cannot be too long. Moreover, there are restrictions for the self-intersection numbers of components (remember that $n_1 \leq n_2 \leq n_3$).

\begin{rem}
According to \cite[Satz~2.11]{Brieskorn},
\begin{itemize}
\item $A_1 = (-2)$, i.e.~$n_1 = 1$,
\item at least one of $A_2, A_3$ is one of $(-2)$, $\left(\begin{array}{cc}-2 & 1 \\ 1 & -2 \end{array}\right)$, $(-3)$,
\item $A_3$ can be of any size only if $A_1 = A_2 = (-2)$; otherwise $A_3$ is at most $5\times5$ matrix.
\end{itemize}
\end{rem}

Finally, we describe the Picard group and the class group of the singularity and the resolution.

\begin{prop}\label{prop_cl_for_singularity}
For the singularity $\C^2/G$ we have
$$\Pic(\C^2/G) = 0, \qquad \Cl(\C^2/G) \simeq Ab(G).$$
\end{prop}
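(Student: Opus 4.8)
The plan is to treat the two assertions separately, since they come from rather different mechanisms. For the Picard group, the key observation is that $\C^2/G = \Spec R$ with $R = \C[x,y]^G$, and the scaling action of $\C^*$ on $\C^2$ commutes with $G$, hence descends to a contracting $\C^*$-action on $\C^2/G$ fixing the image of the origin. Concretely, $R = \bigoplus_{d\geq 0} R_d$ is a positively graded finitely generated $\C$-algebra with $R_0 = \C$. For such a ring every finitely generated projective module is free (the vertex is an attractive fixed point, so $\Spec R$ carries no nontrivial line bundles), and therefore $\Pic(\C^2/G) = 0$.

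For the class group I would first reduce to the smooth locus. The singular point $[0] \in \C^2/G$ is the only singularity and has codimension $2$, so restriction induces an isomorphism $\Cl(\C^2/G) \cong \Cl(U)$, where $U = (\C^2/G)\setminus\{[0]\} = (\C^2\setminus\{0\})/G$ is smooth; on $U$ we have $\Cl(U) = \Pic(U)$. The point of passing to $U$ is that $G$ acts \emph{freely} on $\wt U := \C^2\setminus\{0\}$: smallness of $G$ means that no $g\neq 1$ is a pseudo-reflection, so the fixed locus of each nontrivial $g$ is $\{0\}$, and hence $\wt U \to U$ is an \'etale Galois $G$-cover.

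Now I would compute $\Pic(U)$ by descent along $\wt U \to U$. The crucial inputs are that $\Pic(\wt U) = 0$ (it equals $\Cl(\wt U) = \Cl(\C^2) = 0$, as $\{0\}$ has codimension two and $\C^2$ is factorial) and that its only invertible global functions are the constants, $\Gamma(\wt U, \O^*) = \C^*$ with trivial $G$-action. Feeding this into the low-degree exact sequence of the Hochschild--Serre spectral sequence for the Galois cover,
\[
0 \to H^1(G, \C^*) \to \Pic(U) \to \Pic(\wt U)^G,
\]
the last term vanishes and we obtain $\Pic(U) \cong H^1(G,\C^*) = \Hom(G,\C^*)$. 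Finally $\Hom(G,\C^*) = \Hom(Ab(G),\C^*) \cong Ab(G)$, the last isomorphism being Pontryagin duality for the finite abelian group $Ab(G)$, which yields $\Cl(\C^2/G) \simeq Ab(G)$.

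The routine parts are the codimension-$2$ excision and the verification that $G$ acts freely away from the origin. The step that needs the most care -- and which I expect to be the main obstacle to write cleanly -- is the descent computation identifying $\Pic(U)$ with $\Hom(G,\C^*)$: one must check that line bundles on $U$ correspond to $G$-linearizations of the (necessarily trivial) line bundle on $\wt U$, that these are classified by $H^1(G,\Gamma(\wt U,\O^*))$, and that $G$ acts trivially on the constants $\C^*$. An alternative, purely algebraic route would invoke the general theorem that for a finite group acting on a factorial normal domain freely in codimension one the class group of the invariant ring is $\Hom(G,\C^*)$; I would use the cohomological argument above as the self-contained version.
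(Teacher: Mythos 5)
Your argument is correct, but it is worth knowing that the paper does not actually prove this proposition: it simply cites Theorems~3.6.1 and~3.9.2 of Benson's \emph{Polynomial Invariants of Finite Groups}, which are exactly the two statements at hand. What you have written is, in effect, a self-contained reconstruction of the proofs of those cited theorems, and both halves go through. For the class group your route is the standard one: smallness of $G$ is precisely the statement that every nontrivial element has fixed locus $\{0\}$, so $\C^2\setminus\{0\}\to U$ is an \'etale Galois cover; combined with $\Pic(\C^2\setminus\{0\})=0$, $\Gamma(\C^2\setminus\{0\},\O^*)=\C^*$ with trivial $G$-action, and codimension-two excision, the low-degree descent sequence gives $\Cl(\C^2/G)\cong\Hom(G,\C^*)\cong Ab(G)$. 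The only place where you overreach is the justification of $\Pic(\C^2/G)=0$: the blanket claim that \emph{every} finitely generated projective module over a connected positively graded $\C$-algebra is free is stronger than what you need and is not obviously true in that generality (it is a delicate question for higher rank). What you actually need is only the rank-one case --- that every invertible module over a positively graded normal domain $R$ with $R_0=\C$ is free, equivalently $\Pic(\operatorname{Spec} R)=0$ --- and that is the standard statement (it is what Benson~3.6.1 proves, via the fact that an invertible divisorial ideal of such a graded normal domain can be taken homogeneous and is then principal by graded Nakayama). I would rephrase that step to invoke only the invertible-module statement; with that adjustment the proof is complete.
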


\begin{proof}
These two properties are Theorems~3.6.1 and~3.9.2 in \cite{Benson}.
\end{proof}

\begin{prop}\label{prop_pic_is_lattice}
The Picard group of the minimal resolution $X$ of $\C^2/G$ is a free abelian group generated by divisors dual to irreducible curves in the special fibre of this resolution. That is, if $n$ is the number of exceptional curves of the minimal resolution, then
$$\Pic(X) = \Cl(X) \simeq \Z^n.$$
\end{prop}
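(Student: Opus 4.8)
The plan is to identify $\Pic(X)$ with a topological cohomology group and then to read off the dual basis from the intersection pairing with the exceptional curves. First note that, $X$ being smooth, Weil and Cartier divisors coincide, so $\Cl(X) = \Pic(X)$ and it suffices to analyse $\Pic(X)$. The key geometric input is the homotopy type of $X$. Since $\C^2/G$ is an affine cone, hence contractible, and the minimal resolution $\pi\colon X \to \C^2/G$ is an isomorphism away from the isolated singular point $0$, I would first argue that $X$ deformation retracts onto the exceptional fibre $E = \pi^{-1}(0)$. By the discussion in Section~\ref{resolution_intro}, $E$ is a tree of smooth rational curves $E_0, E_{i,j}$ meeting transversally, so it is simply connected; thus $H_1(X,\Z) = 0$, while $H_2(X,\Z) \cong H_2(E,\Z)$ is the free abelian group on the classes $[E_0], [E_{i,j}]$, of rank $n$.

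Next I would pass from topology to line bundles. Quotient singularities are rational, so $R^q\pi_*\O_X = 0$ for $q>0$; since the base is affine, Leray gives $H^1(X,\O_X) = H^2(X,\O_X) = 0$. Feeding this into the exponential sequence $0 \to \Z \to \O_X \to \O_X^* \to 0$ shows that the first Chern class $c_1\colon \Pic(X) \to H^2(X,\Z)$ is an isomorphism. Combined with the previous paragraph, $\Pic(X) \cong \Z^n$, which already yields the displayed formula.

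Finally, the dual basis. Because $H_1(X,\Z)=0$, the universal coefficient theorem carries no $\Ext$ term and realises $H^2(X,\Z)$ as $\Hom(H_2(X,\Z),\Z)$ via evaluation; under $c_1$ the evaluation of a class on $[E_k]$ is exactly the intersection number with $E_k$. Thus $\Pic(X)$ is canonically the lattice dual to $\bigoplus_k \Z[E_k]$ under the intersection pairing, and the basis $\{D_k\}$ determined by $D_k\cdot E_l = \delta_{kl}$ — the divisors dual to the exceptional curves — is a $\Z$-basis of $\Pic(X)$, as claimed. As a consistency check, the excision sequence $\bigoplus_k\Z[E_k]\to \Cl(X)\to \Cl(\C^2/G)\to 0$, together with $\Cl(\C^2/G) \simeq Ab(G)$ from Proposition~\ref{prop_cl_for_singularity}, then identifies the index of the curve sublattice with $|Ab(G)| = |\det U^0|$.

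I expect the main obstacle to be the two comparison statements that I have used as black boxes: the deformation retraction of $X$ onto $E$ (equivalently, that the homotopy type of the resolution is carried by the exceptional fibre), and the identification $\Pic(X)\cong H^2(X,\Z)$, which couples the analytic exponential sequence with the algebraic Picard group and so requires a GAGA-type comparison. A purely algebraic alternative is to run the excision sequence directly, but then the delicate point — torsion-freeness of $\Cl(X)$ — is precisely what the transcendental argument supplies for free.
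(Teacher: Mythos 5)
Your proposal is correct and follows essentially the same route as the paper: smoothness gives $\Cl(X)=\Pic(X)$, rationality kills $H^1(X,\O_X)$ and $H^2(X,\O_X)$ so the exponential sequence identifies $\Pic(X)$ with $H^2(X,\Z)$, the universal coefficient theorem (with vanishing $\Ext$ term) identifies this with $\Hom(H_2(X,\Z),\Z)$, and the retraction of $X$ onto the tree of exceptional curves gives the dual basis. The only cosmetic difference is that you deduce $H_1(X,\Z)=0$ from simple connectivity of the exceptional tree, whereas the paper cites Koll\'ar's theorem that blow-ups preserve the fundamental group of the contractible quotient.
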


\begin{proof}
Since $X$ is smooth, $\Cl(X) = \Pic(X)$. We start from showing that $\Pic(X)$ is a lattice. First note that $H^1(X,\O_X) = H^2(X, \O_X) = 0$ because of the rationality of~$\C^2/G$. Then from the exponential sequence
$$\ldots \lra H^1(X,\O_X) \lra H^1(X,\O^*(X)) \lra H^2(X,\Z) \lra H^2(X,\O_X) \lra \ldots$$
we deduce that $\Pic(X) \simeq H^2(X,\Z)$. By the universal coefficient theorem we have a short exact sequence
$$0 \lra \Ext(H_1(X,\Z), \Z) \lra H^2(X,\Z) \lra \Hom(H_2(X,\Z), \Z) \lra 0.$$
Its first term is 0, because $\pi_1(X)$ is trivial (the quotient space $\C^2/G$ is contractible and by \cite[Thm.~7.8]{kollar} the blow-ups do not change the fundamental group). Thus $\Pic(X) \simeq \Hom(H_2(X,\Z), \Z)$, which is torsion-free.

Because $X$ can be contracted to the exceptional divisor, which by the rationality of $\C^2/G$ is a tree of rational curves, $H_2(X,\Z)$ is a lattice generated by classes of exceptional curves. Thus $\Pic(X)$ is indeed generated by divisors dual to exceptional curves.
\end{proof}


\section{The Picard torus action}\label{Pic_torus_action}

Let $n$ be the number of components of the exceptional fibre of the minimal resolution~$X$ of $\C^2/G$ for a small subgroup $G<GL(2,\C)$. We define the action of the Picard torus $$T = \Hom(\Pic(X), \C^*) \simeq (\C^*)^n$$ on $\C^{n+3}$ and investigate geometric quotients of open subsets of this affine space. Then, in section~\ref{candidate}, we propose a candidate for the $\scx$, defined as a hypersurface in $\C^{n+3}$, and prove that it is invariant under the action of $T$ in order to consider its quotients by $T$ (see section~\ref{section_resolution_toric}). An inspiration for this part of the paper is the construction of the total coordinate ring of a toric variety, see e.g.~\cite[Section~5.2]{ToricBook}.

To define the action of $T$ on $\C^{n+3}$ we use the extended intersection matrix $U$, described in Notation~\ref{notation_def_U}. We fix the coordinates: let $$\C[y_0,y_{1,1},\ldots,y_{1,n_1},x_1,y_{2,1},\ldots,y_{2,n_2},x_2,y_{3,1},\ldots,y_{3,n_3},x_3]$$ be the coordinate ring of $\C^{n+3}$.

\begin{defn}\label{equation_Pic_torus_action}
Define a Picard torus action $T\times \C^{n+3} \ra \C^{n+3}$ by the formula
\begin{multline*}
(\ul{t},\ul{x}) = ((t_1, \ldots, t_n), (y_0, y_{1,1}, \ldots, y_{3,n_3}, x_3))\mapsto \\ \mapsto (\ul{t}^{u_0} \cdot y_0, \ul{t}^{u_1} \cdot y_{1,1}, \ldots, \ul{t}^{u_{n-1}} \cdot y_{3,n_3}, \ul{t}^{u_n} \cdot x_3)
\end{multline*}
where $u_i$ is the $i$-th column of $U$ and $\ul{t}^{u_i} = t_1^{(u_i)_1}\cdots t_n^{(u_i)_n}$.
\end{defn}

\begin{rem}\label{remark_tori_embedding}
In other words, this is the composition of a homomorphism of tori $T \ra (\C^*)^{n+3} \subset \C^{n+3}$ defined by $U^t$ with a natural action of $(\C^*)^{n+3}$ on $\C^{n+3}$.
\end{rem}

Before we move to considering certain quotients of open subsets of $\C^{n+3}$ by this action (see section~\ref{toric_structure_quotient}), we need some technical observations. In section~\ref{kernel_map} we determine the kernel of the lattice map given by $U$, which appears later, in the toric geometry setting.

\subsection{The kernel map}\label{kernel_map}

We look at $U$ as at the restriction of a map from $\R^{n+3}$ to $\R^n$ (in the standard basis) to the sublattice $\Z^{n+3} \subset \R^{n+3}$. By $\ker U$ we understand the sublattice of $\Z^{n+3}$ carried to 0 by $U$. The aim of this section is to describe a convenient set of its generators.

\begin{defn}
Let $A$ be a square matrix. Then $A'$ denotes $A$ with a new column $(0,\ldots,0,1)^t$ added on the right, $A''$ denotes $A'$ with a new column $(1,0,\ldots,0)^t$ added on the left:

$$A' = \left(
  \begin{array}{c|c}
    A & \begin{array}{c} 0 \\ \vdots \\ 0 \\ 1 \end{array} \\
  \end{array}
\right) \qquad\qquad
A'' = \left(
  \begin{array}{c|c|c}
   \begin{array}{c} 1 \\ 0 \\ \vdots \\ 0 \end{array} & A & \begin{array}{c} 0 \\ \vdots \\ 0 \\ 1 \end{array} \\
  \end{array}
\right)$$
\end{defn}

These operations will be applied to matrices $A_i$ describing the branches of the exceptional divisor of $X$. We can think of $A_i''$ as if we cut out from $U$ the block $A_i$ with the suitable parts of the first column and the column just after $A_i$.

We will frequently use the following term:

\begin{defn}\label{definition_orthogonal_to_branch}
Vector $\xi = (\xi_1,\ldots,\xi_{n_i+1}) \in \Z^{n_i+1}$ is \emph{orthogonal to the $i$-th branch}, of length $n_i$, represented by the matrix $A_i$, if $\xi_1 = 1$ and $A_i'\xi = 0$.
\end{defn}

\begin{lem}\label{def_alpha}
There exists a unique vector $\alpha_i$ orthogonal to the $i$-th branch of the exceptional divisor of the minimal resolution of a surface quotient singularity. It has integral and non-negative entries, which form an increasing sequence.
\end{lem}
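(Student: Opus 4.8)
The plan is to unwind the orthogonality condition into a three-term linear recurrence and then settle positivity and monotonicity by a short induction. First I would write out the $n_i$ scalar equations encoded by $A_i'\xi = 0$. Reading off the rows of $A_i'$ (the tridiagonal matrix $A_i$ with an extra column $(0,\ldots,0,1)^t$ adjoined on the right), the first row gives $-a_{i,1}\xi_1 + \xi_2 = 0$, while the $k$-th row for $2 \le k \le n_i$ gives $\xi_{k-1} - a_{i,k}\xi_k + \xi_{k+1} = 0$. Imposing $\xi_1 = 1$ and adopting the convention $\xi_0 := 0$, these are precisely
$$\xi_{k+1} = a_{i,k}\,\xi_k - \xi_{k-1}, \qquad 1 \le k \le n_i,$$
with initial data $\xi_0 = 0$, $\xi_1 = 1$. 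Since each new value is determined by the two preceding ones, the entire vector $\xi = (\xi_1,\ldots,\xi_{n_i+1})$ is determined, and, as the $a_{i,k}$ are integers, its entries are integers. This settles existence, uniqueness and integrality at once: any vector orthogonal to the branch must obey this recurrence, and conversely the recurrence produces exactly one such vector. (If one prefers a recurrence-free uniqueness argument, one may note that $A_i$, being the intersection form of a minimal resolution, is negative definite, so $A_i'$ has full row rank $n_i$, its kernel is a line, and a kernel vector with $\xi_1 = 0$ is forced to vanish; normalizing $\xi_1 = 1$ then singles out a unique $\xi$.)

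For positivity and strict monotonicity I would track the consecutive differences $\delta_k := \xi_{k+1} - \xi_k$. A direct computation from the recurrence gives the telescoping identity
$$\delta_k = \delta_{k-1} + (a_{i,k} - 2)\,\xi_k, \qquad \delta_0 = \xi_1 - \xi_0 = 1.$$
The decisive input is that every $a_{i,k} \ge 2$, which holds because the $a_{i,k}$ are the entries of a Hirzebruch--Jung continued fraction, equivalently the negatives of the self-intersection numbers $E_{i,k}\cdot E_{i,k} \le -2$ of curves in a minimal resolution (no $(-1)$-curves, by minimality). I would then run an induction on $k$: assuming $\delta_0,\ldots,\delta_{k-1} \ge 1$, one has $\xi_k = \sum_{j=0}^{k-1}\delta_j \ge 1 > 0$, whence $\delta_k = \delta_{k-1} + (a_{i,k}-2)\,\xi_k \ge \delta_{k-1} \ge 1$. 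Thus all differences are $\ge 1$, so $\xi_1 < \xi_2 < \cdots < \xi_{n_i+1}$ is strictly increasing, and since it starts at $\xi_1 = 1 > 0$ it consists of positive integers.

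I do not expect a genuine obstacle: once orthogonality is translated into the recurrence, everything is forced. The only step requiring care is the monotonicity induction, and specifically the observation that it is exactly the condition $a_{i,k} \ge 2$ (the minimality of the resolution) that renders the differences $\delta_k$ non-decreasing via the identity above. Everything else is bookkeeping on the explicit three-term recursion.
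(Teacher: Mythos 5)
Your proposal is correct and follows essentially the same route as the paper: unwind $A_i'\xi=0$ into the three-term recurrence $\xi_{k+1}=a_{i,k}\xi_k-\xi_{k-1}$ with $\xi_0=0$, $\xi_1=1$ to get existence, uniqueness and integrality, then use $a_{i,k}\geq 2$ (minimality, Hirzebruch--Jung entries) to show the consecutive differences are non-decreasing, which is exactly the paper's inequality $z_k\geq z_{k-1}+(z_{k-1}-z_{k-2})$. Your write-up is a little more explicit about the difference sequence $\delta_k$ and adds an optional negative-definiteness remark, but the argument is the same.
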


\begin{proof}
The consecutive entries of $\alpha_i = (1, z_1,\ldots, z_{n_i})$ can be computed from the form of $A_i$ like that:
$$
z_1 = a_{i,1} \in \Z,\quad z_2 = a_{i,2}z_1 - 1 \in \Z,\ldots\quad z_k = a_{i,k}z_{k-1} - z_{k-2} \in \Z,\ldots
$$
Hence by induction all entries of $\alpha_i$ are uniquely determined and integral.
Moreover, $a_{i,j}>1$ since they are entries of a Hirzebruch-Jung continued fraction, so $z_k \geq z_{k-1} + (z_{k-1}-z_{k-2})$ and again by induction the sequence $(z_i)$ is increasing and all its elements are positive.
\end{proof}

\begin{notation}
In what follows $\alpha_i$ will always denote the unique vector orthogonal to the $i$-th branch of the exceptional divisor.
\end{notation}

Now let us construct a basis of $\ker U$.

\begin{notation}\label{notation_kernel_U}
Elements of $\ker U$ will be presented as quadruples $(u, w_1, w_2, w_3)$ consisting of a number $u$ and three vectors $w_i$ of lengths $n_i+1$ respectively, i.e. $$(u, w_1, w_2, w_3) := (u,(w_1)_1,\ldots,(w_1)_{n_1+1}, (w_2)_1,\ldots,(w_2)_{n_2+1}, (w_3)_1,\ldots,(w_3)_{n_3+1}).$$
\end{notation}

Such a partition is natural: when we multiply $U$ by a vector of this form, the number $u$ is multiplied by the numbers in the column corresponding to the branching point of the resolution diagram, and the remaining three parts correspond to the branches. Thus obviously
$$v_2 = (0,\alpha_1,0,-\alpha_3)\quad \hbox{and} \quad v_3= (0,0,\alpha_2,-\alpha_3)$$ are in $\ker U$. We construct $v_1$ such that $\{v_1, v_2, v_3\}$ is a basis of $\ker U$.

\begin{lem}\label{constr_v1}
There is a unique vector $v \in \ker U$ of the form $$(1, (0, *, \ldots, *), (0, *, \ldots, *), (d, *, \ldots, *))$$ where $*$ stands for an integer and $-d$ is the self-intersection number of the central curve in the exceptional divisor of the minimal resolution.
\end{lem}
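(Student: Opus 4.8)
### Proof proposal for Lemma~\ref{constr_v1}

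The plan is to construct $v$ explicitly by working block by block along the structure of $U$ described in Notation~\ref{notation_def_U}, and then to verify that the requirement $Uv = 0$ together with the prescribed shape forces all the free entries uniquely. Write $v = (1, w_1, w_2, w_3)$ in the notation of Notation~\ref{notation_kernel_U}, with $w_1 = (0, *, \ldots, *)$, $w_2 = (0, *, \ldots, *)$ and $w_3 = (d, *, \ldots, *)$. The condition $Uv = 0$ splits, according to the block decomposition of $U$, into one equation coming from the central row (the branching point) and three independent systems of equations coming from the blocks $A_i''$ acting on the respective branch data, together with the central coordinate $u = 1$.

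First I would analyze each branch separately. For a branch $i$, the relevant part of $U$ is the block $A_i''$, and the rows of $A_i''$ impose that the vector $(u_{\text{first}}, (w_i)_1, \ldots, (w_i)_{n_i+1})$, where $u_{\text{first}} = 1$ comes from the top-left corner of $U$, lies in the kernel of the corresponding rows. Concretely, since $A_i''$ is obtained by bordering $A_i$ with a column $(1,0,\ldots,0)^t$ on the left and $(0,\ldots,0,1)^t$ on the right, the interior rows are exactly the recursion defining the orthogonality condition of Definition~\ref{definition_orthogonal_to_branch}, so the entries of $w_i$ are determined recursively by the entries $a_{i,j}$ of $A_i$, precisely as in the proof of Lemma~\ref{def_alpha}. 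The prescribed first entries — $(w_1)_1 = (w_2)_1 = 0$ and $(w_3)_1 = d$ — serve as the initial data for these recursions. Because each $A_i$ is tridiagonal with the off-diagonal $1$'s feeding a two-term recursion, once the first two entries of each $w_i$ are fixed, every subsequent entry is uniquely and integrally determined, exactly as in Lemma~\ref{def_alpha}. This establishes uniqueness of the interior entries and their integrality.

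The one genuine constraint to check — and the step I expect to be the crux — is the compatibility at the branching point: the central row of $U$ reads $-d \cdot u + (w_1)_1 + (w_2)_1 + (w_3)_1 = 0$ after accounting for the $1$'s in the central row that couple $u$ to the first entry of each branch. Wait: more carefully, the central row contributes $-d \cdot u$ from the diagonal entry together with the first entries of each $w_i$ picked out by the $1$'s in the central row's branch-columns, so the equation becomes $-d + (w_1)_1 + (w_2)_1 + (w_3)_1 = 0$. With the prescribed values $(w_1)_1 = (w_2)_1 = 0$ and $(w_3)_1 = d$ this reads $-d + 0 + 0 + d = 0$, which holds identically. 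The remaining bottom rows of each branch block (the rows carrying the bordering $1$ from the extended columns corresponding to the rational functions $x_i$) impose no further condition on $v$, since those extended columns are not among the coordinates of $v$ — equivalently, those rows of $U$ are satisfied automatically because the relevant extended-column entry is absent from the kernel vector. Hence the single nontrivial closing equation is satisfied precisely by the chosen first entries, and no contradiction arises.

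Assembling these observations: the shape constraint pins down $u = 1$ and the first entries of each $w_i$; the tridiagonal recursions then determine all remaining entries uniquely and integrally; and the central compatibility equation is automatically satisfied by the prescribed data. This simultaneously proves existence and uniqueness of $v$, giving the desired vector $v_1$. The main subtlety, which I would make explicit, is simply bookkeeping the block structure of $U$ correctly so that the central row's contribution $-d + (w_3)_1 = 0$ forces $(w_3)_1 = d$ and nothing more — this is what singles out the third branch as the one carrying the value $d$ and explains the asymmetry in the statement.
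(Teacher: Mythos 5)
Your overall strategy coincides with the paper's: split $Uv=0$ into the central row plus three branch systems governed by the bordered matrices $A_i''$, note that each branch system is a two-term recursion whose initial data are $u=1$ and $(w_i)_1\in\{0,d\}$, and check that the central row closes up via $-d+0+0+d=0$. That core is correct. One small imprecision: the recursion is determined by the first two entries of the \emph{bordered} vector $(u,(w_i)_1,\ldots,(w_i)_{n_i+1})$, i.e.\ by $u$ and $(w_i)_1$, not by ``the first two entries of $w_i$'' --- the second entry of $w_i$ is the output of the first recursion step $u-a_{i,1}(w_i)_1+(w_i)_2=0$, not an input.

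The genuine problem is your paragraph on the ``remaining bottom rows.'' You assert they impose no condition ``since those extended columns are not among the coordinates of $v$.'' That is false: $v$ lies in $\Z^{n+3}$, and the three added columns of $U$ correspond exactly to the coordinates $x_1,x_2,x_3$, which are the last entries $(w_i)_{n_i+1}$ of your blocks (see Notation~\ref{notation_kernel_U}, where each $w_i$ has length $n_i+1$). The last row of the $i$-th branch block reads $(w_i)_{n_i-1}-a_{i,n_i}(w_i)_{n_i}+(w_i)_{n_i+1}=0$; far from being vacuous, it is the final step of the recursion and is precisely what determines the $x_i$-coordinate of $v$. Taken literally, your claim would delete that coordinate and turn this row into the constraint $(w_i)_{n_i-1}-a_{i,n_i}(w_i)_{n_i}=0$, which generically fails, so the argument as written breaks at exactly this point. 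The repair is simply to drop that paragraph: every one of the $n_i$ rows of each branch block is consumed by the recursion (row $j$ determines the $(j+2)$-nd entry of the bordered vector), so once the central row is checked there is nothing left over to verify, and both existence and uniqueness follow as in the paper's proof.
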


\begin{proof}
First note that for any $a,b \in \Z$ there is a unique integral vector in the kernel of $A_i''$ of the form $(a,b,*,\ldots,*)$. To see this, we just determine the entries by an inductive procedure as in the proof of Lemma~\ref{def_alpha}.

Consider vectors in kernels of $A_i''$ of two types:
\begin{equation}\label{equation_beta_gamma}
\beta_i = (1,0, *, \ldots, *) \qquad \hbox{and} \qquad \gamma_i = (1,d, *, \ldots, *).
\end{equation}
In addition, again as in the proof of Lemma~\ref{def_alpha}, we see that the entries of each $\beta_i$ form a decreasing sequence and the entries of each $\gamma_i$ form an increasing sequence.

By $\ovl{\beta_i}$ and $\ovl{\gamma_i}$ we denote vectors constructed from $\beta_i$ and $\gamma_i$ by removing the first entry. Look at $$v = (1,\ovl{\beta_1}, \ovl{\beta_2}, \ovl{\gamma_3}),$$ and compute $U\cdot v$. Since each $\ovl{\beta_i}$ starts from 0 and $\ovl{\gamma_i}$ from $d$, the first entry of $U\cdot v$ is 0. Following entries are the same as the entries of first $A_1''\cdot \beta_1$, then $A_2''\cdot \beta_2$ and finally $A_3'' \cdot \gamma_3$, so they are also 0.

Finally, $v$ is uniquely determined, because if we write $v = (u, w_1, w_2, w_3)$ then from the form of $U$ we see that $(u,(w_i)_1,\ldots, (w_i)_{n_i+1})$ must be in the kernel of $A_i''$, so it is uniquely determined by $u$ and $(w_i)_1$.
\end{proof}

\begin{notation}\label{notation_def_K}
Take $v_1 = v$ from the above lemma and write $v_1, v_2, v_3$ in the rows of a matrix $K$, divided into blocks in a similar way as $U$ in Notation~\ref{notation_kernel_U}.
\setlength{\arraycolsep}{4pt}
$$
K=\left(
  \begin{array}{c}
    v_1 \\
    v_2 \\
    v_3 \\
  \end{array}
\right) =
\left(
\begin{array}{c|c|c|c}
      1 & 0, *, \ldots, * & 0, *, \ldots, * & d, *, \ldots, * \\
      0 & \alpha_1 & 0 & -\alpha_3 \\
      0 & 0 & \alpha_2 & -\alpha_3 \\
    \end{array}
\right) =
\left(
\begin{array}{c|c|c|c}
      1 & \ovl{\beta_1} & \ovl{\beta_2} & \ovl{\gamma_3} \\
      0 & \alpha_1 & 0 & -\alpha_3 \\
      0 & 0 & \alpha_2 & -\alpha_3 \\
    \end{array}
\right)
$$
\setlength{\arraycolsep}{6pt}
\end{notation}
The choice of the matrix $K$ defining the kernel of $U$ is obviously non-unique; we choose one that is convenient for further computations.

\begin{rem}
Notice that $K$ indeed defines the kernel of the lattice map, not only the map of vector spaces, i.e. $v_1, v_2, v_3$ span a full sublattice of $\Z^{n+3}$. This is because $K$ has an identity matrix as a minor: $\alpha_1$ and $\alpha_2$ start from 1 by Lemma~\ref{def_alpha}.
\end{rem}

\subsection{The toric structure of quotients by the action of $T$}\label{toric_structure_quotient}

We investigate geometric quotients of open subsets of $\C^{n+3}$ by the action of the Picard torus $T$ using toric geometry as a tool. More precisely, what we do is the reverse of the toric quotient construction, see~\cite[Chapter 5.1]{ToricBook}. Instead of expressing a given toric variety as a quotient of an open set of an affine space, we reconstruct this variety and the open set knowing the torus action on an affine space. Obviously, it is not unique, hence we recover only some properties and then it turns out that remaining parameters can be chosen arbitrarily.

We think of the Picard torus $T$ as of a subtorus of the big torus $(\C^*)^{n+3} \subset \C^{n+3}$; the embedding is given by $U^t$, see Remark~\ref{remark_tori_embedding}. Look at the short exact sequence
$$0 \lra T \lra (\C^*)^{n+3} \lra (\C^*)^3 \lra 0.$$
Let $$M' \simeq \Z^{n+3} \quad \hbox{ and } \quad M \simeq \Z^3$$ be the lattices of characters of the big torus $(\C^*)^{n+3} \subset \C^{n+3}$ (with the same fixed coordinates) and of the quotient torus respectively.
By $P$ we denote the monomial lattice of $T$, which can be identified with the Picard group of $X$. Then we have a map of monomial lattices
\begin{equation}\label{equation_monomial_lattices_map}
0 \lra M \lra M' \xlra{U} P \lra0,
\end{equation}
where $M$ can be identified with $\ker U \subset M'$ and we may assume that the map $M \ra M'$ is given in standard coordinates by $K^t$, where $K$ is as in Notation~\ref{notation_def_K}.

Thus we have described the monomial lattice $M$ of a quotient variety. To understand more of its structure we prefer to look at the dual exact sequence
$$0 \lra P^{\vee} \xlra{U^t} N' \xlra{K} N \lra 0$$
(note that it is exact on both ends, because $M$ is a saturated sublattice of $M'$, i.e. the quotient is torsion free).
We first describe the set of rays of the fan of a quotient and then look which points have to be removed from $\C^{n+3}$ to obtain a chosen variety with good properties as a geometric quotient. (In other words, we will check which points of $\C^{n+3}$ are unstable with respect to chosen linearizations of the action.)

\begin{notation}
When we choose one of many possible geometric quotients of open subsets of $\C^{n+3}$ by $T$, a fan of such a quotient will be denoted by $\Sigma$. And by $\Sigma'$ we will denote the fan of $\C^{n+3}$ in $N'$: the positive orthant and all its faces.
\end{notation}

The discussion above leads to the following observation.

\begin{cor}\label{corollary_quot_fan_rays}
Look at the third arrow in the sequence above: $N' \xra{K} N$. The rays of $\Sigma$ are the images of the rays of $\Sigma'$ under the map given by~$K$, so their coordinates are just columns of~$K$.
\end{cor}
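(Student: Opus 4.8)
The plan is to obtain the rays of $\Sigma$ directly from the reverse toric-quotient construction, whose only input is the lattice surjection $K \colon N' \to N$ appearing in the dual sequence. First I would record that the fan $\Sigma'$ of $\C^{n+3}$ is the positive orthant in $N'$ together with all of its faces, so that its rays are precisely the coordinate rays $\operatorname{Cone}(e_j)$ spanned by the standard basis vectors $e_j$ of $N' \simeq \Z^{n+3}$. Since $K$ is a $3 \times (n+3)$ matrix acting on column vectors, $K(e_j)$ is exactly the $j$-th column of $K$, and hence $K(\operatorname{Cone}(e_j)) = \operatorname{Cone}(K e_j)$ is the cone spanned by that column.

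Next I would invoke the standard theory of toric geometric quotients (the reverse of the construction in \cite[Ch.~5.1]{ToricBook}): once a $T$-invariant open subset of the affine toric variety $\C^{n+3} = X_{\Sigma'}$ admits a good geometric quotient by the subtorus $T$, whose cocharacter lattice is $\ker K = \operatorname{im} U^t = P^{\vee}$, the quotient is again toric with fan $\Sigma$ in $N$, and this fan is the image fan: every cone of $\Sigma$ is $K(\sigma)$ for a cone $\sigma$ of the relevant subfan of $\Sigma'$. Specialising to one-dimensional cones, each ray of $\Sigma$ is the image under $K$ of a ray of $\Sigma'$, and by the previous paragraph these images are exactly the cones spanned by the columns of $K$.

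To make the correspondence ``ray $\leftrightarrow$ column'' clean I would check that no column of $K$ is zero, so that each coordinate ray of $\Sigma'$ maps to an honest ray rather than collapsing to the origin. This is immediate from the explicit form of $K$ in Notation~\ref{notation_def_K}: the first column is $(1,0,0)^t$, while in each of the three branch blocks one of the lower two entries is the corresponding entry of $\alpha_1$, $\alpha_2$ or $-\alpha_3$, all of which are nonzero by Lemma~\ref{def_alpha}.

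The main obstacle is conceptual rather than computational: one must pin down the precise version of the quotient theorem being used and verify its hypotheses --- in particular that $K$ is surjective, which follows from exactness of the dual sequence on the right (equivalently, saturatedness of $M \subset M'$), and that the chosen open set indeed produces a geometric quotient realised by the image fan. Granting this, the claim about rays is a formal consequence of the linearity of $K$. The finer points --- which columns actually occur as rays, and whether each column is the primitive generator of its ray --- depend on the choice of linearization and are settled only when a specific quotient is fixed in the sections that follow.
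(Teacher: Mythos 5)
Your proposal is correct and takes essentially the same route as the paper, which offers no separate proof of this corollary beyond the remark that ``the discussion above leads to the following observation'' --- namely, reversing the toric quotient construction of \cite[Ch.~5.1]{ToricBook} applied to the dual exact sequence $0 \to P^{\vee} \xra{U^t} N' \xra{K} N \to 0$. Your write-up simply makes explicit the details that discussion leaves implicit (images of the coordinate rays of $\Sigma'$ under $K$ are its columns, the columns are nonzero, $K$ is surjective because $M\subset M'$ is saturated), and your closing caveat that primitivity and the exact set of surviving rays are only pinned down once a specific fan is fixed matches how the paper defers those points to Notation~\ref{notation_fan_properties} and Lemma~\ref{branches_resolution}.
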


\begin{rem}\label{remark_matrix_cyclic_sing}
We will also use an analogous fact for a quotient $\C^2/H$ by an abelian group $H < GL(2,\C)$. Let $[a_1,\ldots,a_k]$ be the Hirzebruch-Jung continued fraction whose entries are the self-intersection numbers of components of the exceptional divisor of the minimal resolution $X$ of $\C^2/H$. Let $A$ be a matrix constructed in the same way as the matrix $A_i$ in Notation~\ref{notation_def_U}: with $-a_j$ on the diagonal, 1 just below and just above the diagonal and 0 in other entries. Then the matrix defining the homomorphism $\ker A'' \hookrightarrow \Z^{k+2}$, constructed by taking two (general enough) vectors in the kernel of $A''$ as its rows, corresponds to the toric quotient of $\C^{n+2}$ by the Picard torus action. In other words, the columns of this matrix are rays of the fan of the minimal resolution of $\C^2/H$ (in some chosen coordinates). In particular, a pair of adjacent columns is a lattice basis, which will be used later on. (The details can be established based on \cite[Section~5.2]{ToricBook} together with~\cite{HJquot}.)
\end{rem}

Some more information on the structure of fans of quotients can be obtained based on these observations.
Let $x, y, z$ be the coordinates in $N_{\R} = N\otimes \R$ corresponding to the standard basis in $N$.

\begin{lem}\label{branches_resolution}
A fan $\Sigma \subset N_{\R}$ with the set of rays as in Corollary~\ref{corollary_quot_fan_rays} has the following properties:
\begin{enumerate}
\item the rays of $\Sigma$ are divided into three groups, corresponding to the branches of the diagram, of vectors lying in three planes: $y = 0$, $z = 0$ and $y=z$,
\item the intersection of these planes is the line $y=z=0$, represented in $\Sigma(1)$ by the central ray $(1,0,0)$, the first column of $K$,
\item the rays in each group together with $(1,0,0)$, when considered as vectors not in $N_{\R}$, but in the plane containing them, form the 1-skeleton of a fan of the minimal resolution of a cyclic quotient singularity. In particular, adjacent rays in each group span the intersection of $N$ with the plane containing this group.
\end{enumerate}
\end{lem}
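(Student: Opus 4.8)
The plan is to extract the three geometric claims directly from the explicit form of the matrix $K$ in Notation~\ref{notation_def_K}, since by Corollary~\ref{corollary_quot_fan_rays} the rays of $\Sigma$ are precisely the columns of $K$. First I would read off the columns group by group. The first column is $(1,0,0)^t$, giving the central ray. The columns coming from the first branch are $(\ovl{\beta_1}; \alpha_1; 0)$, i.e.\ they have zero third coordinate, so they lie in the plane $z=0$; the columns from the second branch are $(\ovl{\beta_2}; 0; \alpha_2)$, which have zero second coordinate and hence lie in $y=0$; and the columns from the third branch are $(\ovl{\gamma_3}; -\alpha_3; -\alpha_3)$, whose second and third coordinates agree, placing them in the plane $y=z$. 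This establishes part~(1) and simultaneously part~(2), since the three planes $z=0$, $y=0$, $y=z$ meet exactly along $y=z=0$, and the only ray on that line is the first column $(1,0,0)$.

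For part~(3) the plan is to compare, within each plane, the $2$-dimensional lattice of rays coming from $K$ with the toric picture of a cyclic quotient singularity described in Remark~\ref{remark_matrix_cyclic_sing}. Fix a branch, say the $i$-th, represented by $A_i$, and restrict attention to the plane containing its rays. I would show that, after choosing coordinates in that plane, the relevant columns of $K$ together with $(1,0,0)$ are exactly the columns of a matrix built from two independent vectors in $\ker A_i''$, which by Remark~\ref{remark_matrix_cyclic_sing} are the rays of the fan of the minimal resolution of the cyclic quotient singularity associated with the continued fraction $[a_{i,1},\ldots,a_{i,n_i}]$. Concretely, the central ray corresponds to the first column $(1,0,\ldots)$ of that construction, and the successive branch rays correspond to the successive columns; the entries are determined by the inductive recursion $z_k = a_{i,k}z_{k-1}-z_{k-2}$ used in Lemmas~\ref{def_alpha} and~\ref{constr_v1}, which is the same recursion governing the kernel of $A_i''$. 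The final assertion, that adjacent rays span $N\cap(\text{plane})$, then follows from the corresponding statement in Remark~\ref{remark_matrix_cyclic_sing} that adjacent columns form a lattice basis.

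The main obstacle I anticipate is the lattice-theoretic bookkeeping in part~(3): it is not enough that the rays lie in the correct plane and reproduce the right $1$-skeleton as rays in $N_\R$; one must verify that they generate the full rank-two sublattice $N\cap(\text{plane})$, not merely a finite-index subgroup. This requires checking that the two-dimensional projection identifies $N\cap(\text{plane})$ with the standard lattice $\Z^2$ in which the cyclic-resolution rays live. I would handle this by exhibiting an explicit pair of adjacent columns of $K$ inside the plane that already form a $\Z$-basis of $N\cap(\text{plane})$ --- for instance using that $\alpha_1$ and $\alpha_2$ begin with $1$ (as noted in the remark following Notation~\ref{notation_def_K}), which supplies the needed identity minor --- and then invoke the unimodularity of adjacent pairs from Remark~\ref{remark_matrix_cyclic_sing}. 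Once the lattice identification is pinned down in one plane, the argument is symmetric across the three branches, so the remaining verifications are routine and can be dispatched by the same reasoning.
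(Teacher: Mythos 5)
Your proposal is correct and follows essentially the same route as the paper: parts (1) and (2) are read off directly from the block structure of $K$, and part (3) is obtained by projecting each plane to $\Z^2$ (dropping a coordinate), recognizing the resulting $2\times(n_i+2)$ matrix as one whose rows span the lattice kernel of $A_i''$ (the identity minor coming from the leading entries of $\alpha_i$ guarantees saturation), and invoking Remark~\ref{remark_matrix_cyclic_sing}. The lattice-theoretic concern you flag is handled exactly as you propose.
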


\begin{proof}
Statements 1. and 2. follow directly from the definition of $K$ (see Notation~\ref{notation_def_K}). To prove the last one we construct matrices $K_i$ for $i = 1,2,3$ by taking from $K$ the first column and the $i$-th of remaining blocks from division in Notation~\ref{notation_def_K}. Then columns of $K_i$ are rays of the $i$-th group. The isomorphism of the plane containing the $i$-th group of rays with $\Z^2$ can be defined e.g. by forgetting about the last coordinate for $i = 1,3$ and forgetting about the second one for $i=2$. This corresponds to constructing matrices $\ovl{K_i}$ from $K_i$ by forgetting the last row for $i = 1,3$ and the second one for $i = 2$:
$$K_1 = \left(
          \begin{array}{cc}
            1 & \ovl{\beta_1} \\
            0 & \alpha_1 \\
          \end{array}
        \right) \qquad
K_2 = \left(
          \begin{array}{cc}
            1 & \ovl{\beta_2} \\
            0 & \alpha_2 \\
          \end{array}
        \right) \qquad
K_3 = \left(
          \begin{array}{cc}
            1 & \ovl{\gamma_3} \\
            0 & -\alpha_3 \\
          \end{array}
        \right).$$
By construction of $\alpha_i$, $\beta_i$, $\gamma_i$ rows of $K_i$ span the lattice kernel of the map given by~$A_i''$. Hence, by Remark~\ref{remark_matrix_cyclic_sing}, columns of $K_i$ are rays of the fan of the minimal resolution of the cyclic quotient singularity corresponding to $A_i$ (i.e. to the continued fraction with entries on the diagonal of $A_i$).
\end{proof}

\begin{defn}
By the \emph{outer rays} of $\Sigma$ we understand the set consisting of three rays which are the last columns of $K$ in each block corresponding to a branch of the resolution diagram. Sometimes we use the name \emph{inner rays} for the remaining ones. The first column $(1,0,0)$ will be called the \emph{central ray}.
\end{defn}

We say that a ray lies on the $i$-th branch if it is a column from the $i$-th block of $K$ (excluding the one consisting only of the central ray). We assume that the central ray belongs to all three branches.

In the following lemma we describe the outer rays of $\Sigma$ in terms of Hirzebruch-Jung continued fractions assigned to branches of the resolution diagram in Definition~\ref{definition_invariant_7}.

\begin{lem}\label{lemma_outer_rays}
Assume that the self-intersection numbers of the components of the $i$-th branch of the exceptional divisor are the negatives of the entries of a Hirzebruch-Jung continued fraction $p_i/q_i$, and that $-d$ is the self intersection number of the central curve. Then the outer rays are
$$(dp_3-q_3, -p_3, -p_3),\quad (-q_2, 0, p_2),\quad (-q_1, p_1, 0).$$
\end{lem}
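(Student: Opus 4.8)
The plan is to read the three outer rays directly off the block structure of $K$ (Notation~\ref{notation_def_K}) and then identify their entries with convergents of the Hirzebruch--Jung continued fractions $p_i/q_i$. By definition the outer ray is the last column in each block, so it equals $((\ovl{\beta_1})_{n_1+1}, (\alpha_1)_{n_1+1}, 0)$ on the first branch, $((\ovl{\beta_2})_{n_2+1}, 0, (\alpha_2)_{n_2+1})$ on the second, and $((\ovl{\gamma_3})_{n_3+1}, -(\alpha_3)_{n_3+1}, -(\alpha_3)_{n_3+1})$ on the third. Thus the whole statement reduces to computing the last entries of $\alpha_i$, $\ovl{\beta_i}$ and $\ovl{\gamma_3}$.

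First I would observe that all three families satisfy the same three-term linear recursion $x_{j+1} = a_{i,j}x_j - x_{j-1}$ coming from lying in $\ker A_i''$ (respectively $\ker A_i'$), exactly as used in the proofs of Lemma~\ref{def_alpha} and Lemma~\ref{constr_v1}; they differ only in their two initial values. Writing $P_j,Q_j$ for the numerator and denominator of the $j$-th convergent of $p_i/q_i = [a_{i,1},\ldots,a_{i,n_i}]$, normalised by $P_0=1$, $P_1=a_{i,1}$, $Q_0=0$, $Q_1=1$ together with $P_j=a_{i,j}P_{j-1}-P_{j-2}$ and $Q_j=a_{i,j}Q_{j-1}-Q_{j-2}$, one has $P_{n_i}=p_i$ and $Q_{n_i}=q_i$. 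A short induction then matches the recursions: the solution with initial data $(x_0,x_1)=(0,1)$ satisfies $x_j=P_{j-1}$, while the solution with $(x_0,x_1)=(1,0)$ satisfies $x_j=-Q_{j-1}$.

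Then I would identify each vector with the appropriate solution. Since $\alpha_i$ has $(\alpha_i)_1=1$ and $(\alpha_i)_2=a_{i,1}$, it is the $(0,1)$-solution, so $(\alpha_i)_{n_i+1}=P_{n_i}=p_i$. Since $\beta_i=(1,0,\ast,\ldots)$ is the $(1,0)$-solution, $(\ovl{\beta_i})_{n_i+1}=-Q_{n_i}=-q_i$. For $\gamma_3=(1,d,\ast,\ldots)$ I would use linearity of the recursion to write it as $d$ times the $(0,1)$-solution plus the $(1,0)$-solution, whence $(\ovl{\gamma_3})_{n_3+1}=dP_{n_3}-Q_{n_3}=dp_3-q_3$. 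Substituting these three last entries into the outer-ray expressions above gives exactly $(-q_1,p_1,0)$, $(-q_2,0,p_2)$ and $(dp_3-q_3,-p_3,-p_3)$.

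The only delicate point, and the step I would double-check most carefully, is the bookkeeping of indices and signs when matching the paper's recursions to the standard convergent recursion: the Hirzebruch--Jung minus signs, the index shift $x_j=P_{j-1}$ forced by the initial conditions, and the removal of the first entry in passing from $\beta_i,\gamma_i$ to $\ovl{\beta_i},\ovl{\gamma_i}$. As a conceptual sanity check I would note that, by Lemma~\ref{branches_resolution} and Remark~\ref{remark_matrix_cyclic_sing}, the pair (central ray, outer ray of the $i$-th branch) spans the two-dimensional cone whose Hirzebruch--Jung subdivision is the minimal resolution of the cyclic singularity attached to $A_i$; for the first two branches the far generator of that cone is precisely $(-q_i,p_i)$ in the planar coordinates $\ovl{K_i}$, and the analogous computation with $\gamma_3$ in place of $\beta_3$ produces the $d$-shift yielding $(dp_3-q_3,-p_3)$, in agreement with the direct calculation.
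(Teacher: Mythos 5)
Your proof is correct, and its overall architecture is the same as the paper's: read the outer rays off the last column of each block of $K$, reduce everything to the last entries of $\alpha_i$, $\beta_i$, $\gamma_3$, and obtain $\gamma_3$ from $\beta_3$ and $\alpha_3$ by linearity of the recursion. The one genuine difference is in how you identify those last entries. The paper runs the three-term recursion so that $(\alpha_i)_{j+1}$ becomes the numerator of the \emph{reversed} tail $[a_{i,n_i},\ldots,a_{i,n_i-j+1}]$ and then invokes the reversal property of Hirzebruch--Jung fractions (the numerator is preserved, the denominator is replaced by its inverse mod $p_i$), citing this fact rather than proving it; for $\beta_i$ it similarly passes through the reversed fraction $[a_{i,n_i},\ldots,a_{i,2}]$ and the identity $p_i/q_i = a_{i,1} - s_i/r_i$. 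You instead match the entries directly against the standard forward convergent recursion $P_j = a_{i,j}P_{j-1}-P_{j-2}$, $Q_j = a_{i,j}Q_{j-1}-Q_{j-2}$, identifying $\alpha_i$ with the $(0,1)$-solution and $\beta_i$ with the $(1,0)$-solution. This is a little more elementary and self-contained, since it needs only the short induction you state and not the reversal identity; the paper's route has the mild advantage of connecting the entries of $\alpha_i$ to the continued fractions of the cyclic singularities sitting on each branch, which is the viewpoint used elsewhere in Section~3. Your index bookkeeping (the shift $x_j=P_{j-1}$, the sign on $Q$, and the passage from $\beta_i$ to $\ovl{\beta_i}$) checks out.
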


\begin{proof}
We have to find formulae for the last entries of vectors $\alpha_i$, $\beta_i$, $\gamma_i$ introduced in the proofs of Lemmata~\ref{def_alpha} and~\ref{constr_v1}. First of all we notice that the recursive formula for the entries of $\alpha_i$, given in the proof of Lemma~\ref{def_alpha}, is also a formula for the numerator of the reversed continued fraction. More precisely, if $p_i/q_i = [a_{i,1}, \ldots, a_{i,n_i}]$, and $\alpha_i$ is orthogonal to the $i$-th branch, then $(\alpha_i)_{j+1}$ is the numerator of $[a_{i,n_i},a_{i,n_i-1},\ldots,a_{i,n_i-j+1}]$ for $j \in \{1,\ldots,n_i\}$. But the reversed continued fraction to $p_i/q_i$ is $p_i/q_i'$ where $q_i'$ is reverse modulo $p$ to $q_i$ (see e.g.~\cite[Section~10.2]{ToricBook}). Thus $(\alpha_i)_{n_i+1} = p_i$.

The case of $\beta_i$ from formula~(\ref{equation_beta_gamma}) is very similar. If we write down an analogous formula for its entries, we obtain that the last one is the negative of the numerator of $[a_{i,n_i}, \ldots, a_{i,2}]$, which is the same as the negative of the numerator of $[a_{i,2}, \ldots, a_{i,n_i}] = r_i/s_i$. But $$\frac{p_i}{q_i} = a_{i,1} - \frac{1}{\frac{r_i}{s_i}},$$ so indeed $r_i=q_i$ and $\beta_i$ ends with $-q_i$.

Let $\alpha_i' = (0,(\alpha_i)_1,\ldots,(\alpha_i)_{n_i+1})$. Then $\gamma_i = \beta_i + d\alpha_i'$, because each of these vectors is uniquely determined by their first two entries. Therefore $\gamma_i$ ends with $dp_i-q_i$.
\end{proof}

\begin{lem}
The outer rays span a convex cone which contains $(1,0,0)$ inside.
\end{lem}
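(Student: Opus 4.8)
The plan is to show that the central ray $(1,0,0)$ is a strictly positive combination of the three outer rays. Since a simplicial cone spanned by three linearly independent vectors in $\R^3$ has its topological interior equal to the set of strictly positive combinations of its generators, this will prove the claim, and will simultaneously exhibit the three rays as linearly independent so that the cone is full-dimensional.

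First I would set up the linear system. Writing the outer rays from Lemma~\ref{lemma_outer_rays} as $v^{(1)} = (-q_1, p_1, 0)$, $v^{(2)} = (-q_2, 0, p_2)$ and $v^{(3)} = (dp_3 - q_3, -p_3, -p_3)$, I look for $\lambda_1, \lambda_2, \lambda_3$ with $\lambda_1 v^{(1)} + \lambda_2 v^{(2)} + \lambda_3 v^{(3)} = (1,0,0)$. The $y$- and $z$-coordinate equations give $\lambda_1 p_1 = \lambda_3 p_3 = \lambda_2 p_2$, so $\lambda_1 = \lambda_3 p_3/p_1$ and $\lambda_2 = \lambda_3 p_3/p_2$; since the numerators $p_i$ of the Hirzebruch--Jung fractions are positive, all three coefficients share the sign of $\lambda_3$. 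Substituting into the $x$-coordinate equation yields $\lambda_3\, p_3\,\bigl(d - \tfrac{q_1}{p_1} - \tfrac{q_2}{p_2} - \tfrac{q_3}{p_3}\bigr) = 1$. Hence a necessarily unique solution exists, with all $\lambda_i$ strictly positive, exactly when
$$d > \frac{q_1}{p_1} + \frac{q_2}{p_2} + \frac{q_3}{p_3}.$$
Uniqueness of the solution also shows the three rays are linearly independent, so the argument reduces entirely to this inequality.

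Proving the inequality is the step I expect to be the main obstacle, and the only genuinely geometric one. I would deduce it from the negative definiteness of the intersection matrix $U^0$ of the exceptional divisor, which holds for any resolution of a normal surface singularity. Ordering the components with the central curve last, one has $U^0 = \left(\begin{smallmatrix} B & C \\ C^t & -d\end{smallmatrix}\right)$, where $B = \diag(A_1, A_2, A_3)$ collects the three arms (which do not meet one another) and $C$ records the intersections of the central curve with the first component of each arm. Each $A_i$ is itself the negative-definite intersection matrix of a cyclic quotient resolution, so $B$ is negative definite and invertible, and the Schur complement criterion forces the scalar $-d - C^t B^{-1} C < 0$. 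As $C$ meets only the first vertex of each arm, $C^t B^{-1} C = \sum_i (A_i^{-1})_{11}$.

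The final ingredient is the identity $(A_i^{-1})_{11} = -q_i/p_i$, which I would obtain from a standard continuant computation: for the tridiagonal $A_i$ built from $p_i/q_i = [a_{i,1}, \ldots, a_{i,n_i}]$ one has $\det A_i = (-1)^{n_i} p_i$, while the $(1,1)$-minor is the analogous determinant for $[a_{i,2}, \ldots, a_{i,n_i}]$, whose numerator is exactly $q_i$; comparing signs gives the identity. Substituting into $-d - \sum_i (A_i^{-1})_{11} < 0$ produces precisely $d > \sum_i q_i/p_i$, closing the argument. As a sanity check, in the Du Val $D_4$ case all branches are $(-2)$, so $q_i/p_i = 1/2$ and $d = 2 > 3/2$.
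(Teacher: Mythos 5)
Your proof is correct, and its first half --- expressing $(1,0,0)$ as a positive combination of the three outer rays and reducing the whole statement to the inequality $d > q_1/p_1 + q_2/p_2 + q_3/p_3$ --- is exactly the computation the paper performs. Where you genuinely diverge is in proving that inequality. The paper dismisses $d\geq 3$ via $q_i<p_i$ and then settles $d=2$ by a case-by-case check against the table in Brieskorn's Satz~2.11, so its argument leans on the classification of small subgroups of $\GL(2,\C)$. You instead derive the inequality from the negative definiteness of the intersection matrix $U^0$ of the exceptional divisor (the classical Du Val/Mumford theorem), via the Schur complement condition $-d - C^t B^{-1} C < 0$ with $B=\diag(A_1,A_2,A_3)$, combined with the continuant identity $(A_i^{-1})_{11} = -q_i/p_i$. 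Both ingredients check out: the identity follows from $\det A_i = (-1)^{n_i}p_i$ together with the fact, also used in the paper's proof of Lemma~\ref{lemma_outer_rays}, that the numerator of $[a_{i,2},\ldots,a_{i,n_i}]$ equals $q_i$, and the Schur complement criterion applies since each $A_i$ is itself a negative definite cyclic-quotient intersection matrix. Your route is more conceptual and strictly more general --- it establishes the inequality for any negative definite T-shaped configuration without consulting the classification --- at the cost of importing the negative definiteness theorem, which the paper never explicitly invokes. Either argument is acceptable here.
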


\begin{proof}
We show that $(1,0,0)$ is a positive combination of the outer rays. We have
$$\frac{p_3}{p_1}(-q_1, p_1, 0) + \frac{p_3}{p_2}(-q_2,0,p_2) + (dp_3-q_3, -p_3, -p_3) = p_3(d-\frac{q_3}{p_3}-\frac{q_1}{p_1}-\frac{q_2}{p_2})(1,0,0),$$
so it suffices to prove that $$d - \frac{q_1}{p_1}-\frac{q_2}{p_2}-\frac{q_3}{p_3}>0.$$
If $d \geq 3$ this is obvious, because $q_i < p_i$. And if $d=2$, this can be checked case by case using the table in \cite[Satz~2.11]{Brieskorn}, in which the continued fraction are assigned to group structures. The cases where this number is smallest are the quotient by subgroups of $\SL(2,\C)$.
\end{proof}

\begin{notation}\label{notation_fan_properties}
We want to consider only fans $\Sigma \subset N_{\R}$ with the set of rays as described in Corollary~\ref{corollary_quot_fan_rays} and such that the (set-theoretical) sum of all cones in $\Sigma$ (the support of $\Sigma$) is the convex cone spanned by the outer rays. Also, we consider only simplicial fans. From now on $\Sigma$ will denote a fan satisfying these conditions.
\end{notation}

The choice of such a fan corresponds to the choice of the quotient $X_{\Sigma}$ of an open subset of $\C^{n+3}$ by~$T$. More precisely, $X_{\Sigma}$ is a geometric quotient of $\C^{n+3}\setminus Z(\Sigma)$ by $T$, where $Z(\Sigma)$ is the zero set of the irrelevant ideal of $\Sigma$. Since only simplicial fans are admitted, these quotients are geometric. The structure of $Z(\Sigma)$ is studied in more detail in section~\ref{quotient_smoothnes}.

It turns out that some 2- and 3-dimensional cones have to belong to a fan $\Sigma$ satisfying the conditions of Notation~\ref{notation_fan_properties}, independently of the choice.

\begin{lem}\label{lemma_fan_description}
$\Sigma$ contains the following cones:
\begin{enumerate}
\item all faces spanned by two adjacent rays in one of the planes $y=0$, $z=0$, $y=z$,
\item faces $\sigma((0,1,0), (0,0,1))$, $\sigma((0,1,0), (d,-1,-1))$, $\sigma((0,0,1), (d,-1,-1))$,
\item 3-dimensional cones containing the central ray: $\sigma((1,0,0,), (0,1,0), (0,0,1))$, $\sigma((1,0,0),(0,0,1), (d,-1,-1))$, $\sigma((1,0,0), (0,1,0), (d,-1,-1))$.
\end{enumerate}

Moreover, the cones containing the central ray are smooth and the divisor associated with the central ray is a $\P^2$.
\end{lem}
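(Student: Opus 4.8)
The plan is to exploit two geometric features of the ray configuration given by the columns of $K$ (Notation~\ref{notation_def_K}): that the three branches lie in the planes $y=0$, $z=0$, $y=z$ meeting along the central line (Lemma~\ref{branches_resolution}), and that the central ray $(1,0,0)$ lies in the \emph{interior} of the support $C$ of $\Sigma$ (the pointed, simplicial cone spanned by the outer rays). First I would record the three inner rays, i.e. the first columns of the branch blocks of $K$: reading off the leading entries of $\alpha_i$, $\beta_i$, $\gamma_i$ (which are $1$, $0$, $d$, as in the proofs of Lemmata~\ref{def_alpha} and~\ref{constr_v1}) gives $n_1=(0,1,0)$, $n_2=(0,0,1)$, $n_3=(d,-1,-1)$. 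Then I introduce the projection $\pi\colon N_{\R}\to\R^2$, $(x,y,z)\mapsto(y,z)$, with kernel the central line; under $\pi$ every ray of the $i$-th branch maps to a single direction, namely $(1,0)$, $(0,1)$, $(-1,-1)$ for $i=1,2,3$, while $(1,0,0)\mapsto 0$. The main preliminary step is to rule out \emph{mixed} cones: no cone of $\Sigma$ has one ray from each branch. If $\sigma=\sigma(\rho^{(1)},\rho^{(2)},\rho^{(3)})$ were such a cone, then since $(1,0),(0,1),(-1,-1)$ satisfy a positive relation, some strictly positive combination $s\rho^{(1)}+t\rho^{(2)}+u\rho^{(3)}$ lies in $\ker\pi$, hence equals $\mu\cdot(1,0,0)$; as this vector lies in the pointed cone $C$, we get $\mu>0$, so $(1,0,0)$ lies in the relative interior of $\sigma$, contradicting that $\langle(1,0,0)\rangle$ and $\sigma$ are cones of the fan $\Sigma$ and must meet in a common face.

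With this in hand, part~(1) follows from a minimal-cone argument. Fix a plane $\Pi\in\{y=0,z=0,y=z\}$, and let $\tau$ be a cone spanned by two rays adjacent in the $2$-dimensional resolution fan that the branch rays together with $(1,0,0)$ form inside $\Pi$ (Lemma~\ref{branches_resolution}(3)). For $p$ in the relative interior of $\tau$, let $\sigma_p\in\Sigma$ be the cone containing $p$ in its relative interior. If all rays of $\sigma_p$ lie in $\Pi$, then $\sigma_p$ is $2$-dimensional, and the face condition (applied to any further ray it would otherwise contain in its interior) forces $\sigma_p=\tau$. Otherwise $\sigma_p$ has rays on both sides of $\Pi$; inspecting the coordinate transverse to $\Pi$ shows these must come from the two branches separated by $\Pi$, and comparing with the sign of the coordinate of $p$ along the branch direction of $\tau$ makes $\sigma_p$ either a mixed cone (excluded above) or produces a sign contradiction. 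Hence $\sigma_p=\tau\in\Sigma$.

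For parts~(2) and~(3) I pass to the transverse picture at the central ray. Since $(1,0,0)$ is interior to $C=|\Sigma|$, the cones of $\Sigma$ having $(1,0,0)$ as a face cover a full neighbourhood, so their images under $\pi$ tile $\R^2$. The only available directions are $(1,0),(0,1),(-1,-1)$, which cut $\R^2$ into exactly three sectors; thus there are exactly three maximal cones through $(1,0,0)$, each of the form $\sigma((1,0,0),\rho,\rho')$ with $\rho,\rho'$ on two different branches. Part~(1) identifies the two walls of each such cone through the central ray as $\sigma((1,0,0),n_i)$, pinning the branch rays to the inner rays $n_1,n_2,n_3$; this yields the three cones of part~(3), and their $2$-dimensional faces not containing the central ray are exactly the cones of part~(2). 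Smoothness is then the computation $|\det|=1$ for each of $\sigma((1,0,0),(0,1,0),(0,0,1))$, $\sigma((1,0,0),(0,0,1),(d,-1,-1))$, $\sigma((1,0,0),(0,1,0),(d,-1,-1))$, and the divisor of the central ray is the toric variety of the star fan of $(1,0,0)$ in $N/\Z(1,0,0)\simeq\Z^2$, whose rays $(1,0),(0,1),(-1,-1)$ with the three maximal cones form the defining fan of $\P^2$.

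I expect the main obstacle to be the exclusion of mixed cones together with the ensuing case analysis in part~(1): the rest is either bookkeeping of the leading entries of $K$ or a clean determinant/star-fan computation, whereas the fact that $\Sigma$ cannot route a maximal cone ``across'' the central line is precisely what makes the local structure at $(1,0,0)$ rigid and drives all three statements.
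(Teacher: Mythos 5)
Your proof is correct and rests on the same key observation as the paper's: a cone with one ray from each branch would contain the central ray $(1,0,0)$ in its relative interior, which is incompatible with the fan axioms, and this rigidity forces both the adjacent-pair cones of part~(1) and the three maximal cones at the central ray, with the smoothness and $\P^2$ claims following from the same determinant and star-fan (projection to $x=0$) computations. Your write-up is somewhat more explicit than the paper's (isolating the ``no mixed cones'' step and the tiling of the transverse plane), but it is essentially the same argument.
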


\begin{proof} See Fig.~\ref{figure_fan} for a picture of a plane section of the cone spanned by the outer rays and the cones mentioned in the lemma.

First assume that the cone spanned by two adjacent rays $\rho$ and $\rho'$ from one branch is not in $\Sigma$. Let $\rho$ be nearer to the central ray $(1,0,0)$. Then there exists a cone in $\Sigma$ spanned by $\rho$, $\rho_1$, $\rho_2$ such that each of these rays comes from a different branch -- otherwise $\rho$ would not be in the interior of the support of $\Sigma$. But then $\sigma(\rho, \rho_1, \rho_2)$ contains $(1,0,0)$ in the interior, hence we get a contradiction.

As for the cones listed in (2) and (3), they must be in $\Sigma$, since the rays $(0,1,0)$, $(0,0,1)$ and $(d,-1,-1)$ are the only three which can span a cone with $(1,0,0)$, which lies in the interior of the cone spanned by the outer rays.

If we choose any pair of vectors from $(0,1,0)$, $(0,0,1)$, $(d, -1, -1)$ and take $(1,0,0)$ as the third one, obviously we get a lattice basis, so the central part of the picture consists indeed of three smooth cones.
To describe the structure of the divisor associated with the central ray $(1,0,0)$ of this fan one has to project cones containing it to the orthogonal plane $x=0$ (see \cite[Proposition~3.2.7]{ToricBook}). The result is the complete fan with rays $(1,0)$, $(0,1)$ and $(-1,-1)$, hence the fan of a smooth~$\P^2$.
\end{proof}

\begin{figure}[h]
\vspace{1.25cm}
\vbox to 0ex{\vss\centerline{\includegraphics[width=0.5\textwidth]{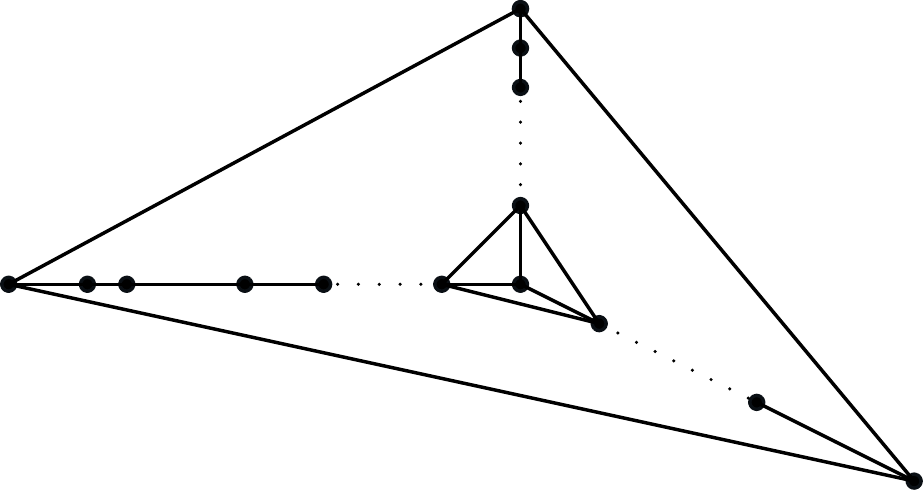}}\vss}
\vspace{1.25cm}
\caption{\small Faces that have to be in $\Sigma$ (shown in a section)}
\label{figure_fan}
\end{figure}

Fig.~\ref{figure_fan} is a schematic picture of a section of the cone spanned by the outer rays with the sections of faces mentioned in Lemma~\ref{lemma_fan_description} included. All considered fans $\Sigma$ correspond to triangulations of this diagram. Toric varieties obtained this way are different geometric quotients of open subsets of $\C^{n+3}$ by~$T$. In general there is no smooth model, for example because of the fact that the cones containing the faces of the cone spanned by the outer rays are most often non-smooth.

\subsection{The candidate for $\scx$}\label{candidate}

We introduce a hypersurface $S \subset \C^{n+3}$, which is our candidate for the spectrum of the Cox ring of the minimal resolution of $\C^2/G$. Its equation can be determined from the resolution diagram together with the self-intersection numbers of the components of the special fibre. We prove that it is invariant under the Picard torus action.

\begin{constr}\label{equation}
We define a hypersurface $S \subset \C^{n+3}$ by describing its ideal
$$I(S) \subset \C[y_0,y_{1,1},\ldots,y_{1,n_1},x_1,y_{2,1},\ldots,y_{2,n_2},x_2,y_{3,1},\ldots,y_{3,n_3},x_3],$$ which is generated by a single trinomial equation. Each monomial of this equation corresponds to one branch of the resolution diagram. The variables, except $y_0$, are divided into three sequences $$(y_{i,1}, y_{i,2},\ldots, y_{i, n_i-1}, y_{i,n_i},x_i)$$ for $i=1, 2, 3,$ and all variables in the $i$-th sequence appears only in the monomial corresponding to the $i$-th branch. As the $i$-th vector of exponents we take the vector $\alpha_i$ orthogonal to the $i$-th branch, so the equation is
\begin{equation}\label{equation_equation_S}
\sum_{i=1,2,3} y_{i,1}^{(\alpha_i)_1}\cdots y_{i,n_i}^{(\alpha_i)_{n_i}}\cdot x_i^{(\alpha_i)_{n_i+1}} = 0.
\end{equation}
It can be easily seen that the hypersurface defined by this equation is irreducible.
\end{constr}

\begin{rem}\label{equation_description}
In Lemma~\ref{def_alpha} we proved that all entries of each $\alpha_i$ are positive integers and that $(\alpha_i)_1 = 1$. Hence the equation above is indeed a polynomial and variables $y_{1,1}, y_{2,1}, y_{3,1}$ appear with exponent 1.
\end{rem}

The choice of coefficients of monomials equal to 1 is arbitrary. For any other set of coefficients we would just obtain a different embedding of $\scx$ in $\C^{n+3}$.

\begin{ex}
In the case of Du Val singularities the equation is formed as follows: for each variable its exponent is equal to the distance of the corresponding vertex in the resolution diagram from the branching point (we may assume that $x_i$ corresponds to a leaf added at the end of the $i$-th branch, so its distance from the branching point is the distance of $y_{i,n_i}$ plus 1). For example, let us look at $E_8$ singularity $\C^2/\BI$. The extended intersection matrix~is

$$U(\BI) = \left(
  \begin{array}{c|cc|ccc|ccccc}
    -2 & 1 & 0 & 1 & 0 & 0 & 1 & 0 & 0 & 0 & 0\\
\hline
    1 & -2 & 1 & 0 & 0 & 0 & 0 & 0 & 0 & 0 & 0\\
\hline
    1 & 0 & 0 & -2 & 1 & 0 & 0 & 0 & 0 & 0 & 0\\
    0 & 0 & 0 & 1 & -2 & 1 & 0 & 0 & 0 & 0 & 0\\
\hline
    1 & 0 & 0 & 0 & 0 & 0 & -2 & 1 & 0 & 0 & 0\\
    0 & 0 & 0 & 0 & 0 & 0 & 1 & -2 & 1 & 0 & 0\\
    0 & 0 & 0 & 0 & 0 & 0 & 0 & 1 & -2 & 1 & 0\\
    0 & 0 & 0 & 0 & 0 & 0 & 0 & 0 & 1 & -2 & 1\\
  \end{array}
\right)
$$

and the kernel matrix with the rays of $\Sigma(\BI)$ as columns is

$$
K(\BI) = \left(
  \begin{array}{c|cc|ccc|ccccc}
    1 & 0 & -1 & 0 & -1 & -2 & 2 & 3 & 4 & 5 & 6\\
    0 & 1 & 2 & 0 & 0 & 0 & -1 & -2 & -3 & -4 & -5\\
    0 & 0 & 0 & 1 & 2 & 3 & -1 & -2 & -3 & -4 & -5\\
  \end{array}
\right)
$$

The entries of vectors $\alpha_i$, which are the exponents in the equation, can be read out from the second and third row of $K(\BI)$:

$$S(\BI) = \{y_{1,1}x_1^2 + y_{2,1}y_{2,2}^2x_2^3 + y_{3,1}y_{3,2}^2y_{3,3}^3y_{3,4}^4x_3^5= 0\}$$
\end{ex}

\begin{ex}\label{example_big_diagram_2}
Let us look at a group which is not in $\SL(2,\C)$: take $\BD_{23,39}$, which appeared already in Example~\ref{example_big_diagram}. We have

$$U(\BD_{23,39}) = \left(
  \begin{array}{c|cc|cc|ccccc}
    -3 & 1 & 0 & 1 & 0 & 1 & 0 & 0 & 0 & 0\\
\hline
    1 & -2 & 1 & 0 & 0 & 0 & 0 & 0 & 0 & 0\\
\hline
    1 & 0 & 0 & -2 & 1 & 0 & 0 & 0 & 0 & 0\\
\hline
    1 & 0 & 0 & 0 & 0 & -4 & 1 & 0 & 0 & 0\\
    0 & 0 & 0 & 0 & 0 & 1 & -2 & 1 & 0 & 0\\
    0 & 0 & 0 & 0 & 0 & 0 & 1 & -2 & 1 & 0\\
    0 & 0 & 0 & 0 & 0 & 0 & 0 & 1 & -3 & 1\\
  \end{array}
\right)
$$

$$
K(\BD_{23,39}) = \left(
  \begin{array}{c|cc|cc|ccccc}
    1 & 0 & -1 & 0 & -1 & 3 & 11 & 19 & 27 & 62\\
    0 & 1 & 2 & 0 & 0 & -1 & -4 & -7 & -10 & -23\\
    0 & 0 & 0 & 1 & 2 & -1 & -4 & -7 & -10 & -23\\
  \end{array}
\right)
$$

and again we read out vectors $\alpha_i$ from $K(\BD_{23,39})$ obtaining
$$S(\BD_{23,39}) = \{y_{1,1}x_1^2 + y_{2,1}x_2^2 + y_{3,1}y_{3,2}^4y_{3,3}^7y_{3,4}^{10}x_3^{23} = 0\}.$$
\end{ex}

\begin{lem}\label{lemma_S_is_invariant}
The hypersurface $S$ is invariant under the action of the Picard torus $T$ from Definition~\ref{equation_Pic_torus_action}.
\end{lem}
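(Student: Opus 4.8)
The plan is to show that the Picard torus action rescales each of the three monomials in equation~\eqref{equation_equation_S} by the \emph{same} character of $T$, so that the whole trinomial is multiplied by a unit and its zero set $S$ is preserved.

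First I would record how a single monomial transforms. Writing $m_i = y_{i,1}^{(\alpha_i)_1}\cdots y_{i,n_i}^{(\alpha_i)_{n_i}} x_i^{(\alpha_i)_{n_i+1}}$ for the $i$-th summand, the action of Definition~\ref{equation_Pic_torus_action} multiplies each variable by $\underline{t}$ raised to the corresponding column of $U$. Hence $m_i \mapsto \underline{t}^{w_i}\, m_i$, where $w_i \in \Z^n$ is the sum of the columns of $U$ indexed by $y_{i,1},\ldots,y_{i,n_i},x_i$, weighted by the exponents $(\alpha_i)_1,\ldots,(\alpha_i)_{n_i+1}$.

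The core of the argument is the computation of $w_i$, which I would organize according to the row blocks of $U$ from Notation~\ref{notation_def_U}. The columns belonging to the $i$-th branch vanish on the rows of the other two branches, so those components of $w_i$ are $0$. On the $i$-th branch rows, the columns of $y_{i,1},\ldots,y_{i,n_i}$ restrict to $A_i$ while the column of $x_i$ restricts to $(0,\ldots,0,1)^t$, so this part of $w_i$ is exactly $A_i'\alpha_i$, which vanishes by the defining orthogonality relation of $\alpha_i$ (Definition~\ref{definition_orthogonal_to_branch}). Finally, on the central row only $y_{i,1}$ contributes a nonzero entry, equal to $1$, so that component of $w_i$ equals $(\alpha_i)_1 = 1$. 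Thus $w_i = (1,0,\ldots,0)$ for every $i=1,2,3$.

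Since $w_1 = w_2 = w_3 = (1,0,\ldots,0)$, all three monomials are scaled by the single character $\underline{t}^{(1,0,\ldots,0)} = t_1$, the coordinate of $T$ attached to the central curve $E_0$. Consequently the trinomial on the left of~\eqref{equation_equation_S} is multiplied by $t_1$ under the action, so its vanishing locus is unchanged and $S$ is $T$-invariant. The only point requiring care is the bookkeeping between the block decomposition of $U$ and the orthogonality condition $A_i'\alpha_i = 0$; once the three weight vectors are seen to coincide, invariance is immediate.
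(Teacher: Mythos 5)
Your proposal is correct and follows essentially the same route as the paper: both compute the weight of each monomial as $U$ applied to the vector $(0,\ldots,\alpha_i,\ldots,0)$, use the orthogonality relation $A_i'\alpha_i=0$ together with $(\alpha_i)_1=1$ to conclude that each weight is $(1,0,\ldots,0)$, and deduce that the trinomial is scaled by a single character so its zero set is preserved. Your block-by-block bookkeeping is just a more explicit spelling-out of the paper's one-line computation (the paper calls the resulting character $t_0$ rather than $t_1$, a harmless indexing discrepancy within the paper itself).
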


\begin{proof} We look at the action of $T$ on each monomial in the equation of $S$. The weights of this action are given by the columns of $U$, so to compute the weight vector of the action on the monomial corresponding to the $i$-th branch one multiplies $U$ by $(0,\alpha_1,0,0)$, $(0,0,\alpha_2,0)$ and $(0,0,0,\alpha_3)$ respectively. Because $\alpha_i$ is orthogonal to the $i$-th branch, the result is $(1,0,0,\ldots,0)$, which means that $T$ acts on each monomial, and therefore on the whole equation, by multiplication by $t_0$. Thus the set of zeroes of this equation is invariant under the action of $T$.
\end{proof}

Therefore we may consider geometric quotients of open subsets of $S$ by~$T$. They will be presented as subsets in different geometric quotients of open sets in $\C^{n+3}$ by~$T$.


\section{The resolution as a divisor in a toric variety}\label{section_resolution_toric}

The aim of this section is to describe properties of certain geometric quotients of open subsets of hypersurface $S \subset \C^{n+3}$, introduced in Construction~\ref{equation}, by the Picard torus action. Let us fix a simplicial fan $\Sigma \subset \R^3$ satisfying conditions in Notation~\ref{notation_fan_properties}. In particular, its rays are columns of matrix $K$ (see Notation~\ref{notation_def_K}). We consider an open subset of $S$ obtained by removing zeroes of the irrelevant ideal
$$W = S\setminus Z(\Sigma) \subset \C^{n+3}\setminus Z(\Sigma)$$
and its quotient by the action of $T$.

\begin{rem}\label{remark_good_quotient}
Since the quotient $X_{\Sigma}$ of $\C^{n+3}\setminus Z(\Sigma)$ by $T$ is geometric and $W = S\setminus Z(\Sigma)$ is a $T$-invariant closed subset of $\C^{n+3}\setminus Z(\Sigma)$, the quotient of $W$ by $T$ is also geometric (e.g. by \cite[Proposition~2.3.9]{CoxRings}).
\end{rem}

We investigate the quotient $Y = W/T$ by looking at the embeddings which are horizontal arrows in the following diagram and using toric geometry.

\vspace{-0.15cm}
$$\xymatrix{
& W = S\setminus Z(\Sigma)\: \ar[d]^{/T} \ar@{^{(}->}[r] &\C^{n+3}\setminus Z(\Sigma) \ar[d]^{/T}& \\
& Y = W/T\: \ar@{^{(}->}[r] & X_{\Sigma} &
}$$

First we prove the smoothness of $Y$ (see Proposition~\ref{prop_quotient_smoothness}), which, roughly speaking, follows from the fact that the action of $T$ on $W$ is free and the smoothness of~$W$. In section~\ref{section_quotient_resolution} we construct a birational morphism from $Y$ to the quotient $\C^2/G$, coming from the embedding in a toric variety, which implies that $Y$ is a resolution of $\C^2/G$. The minimality of this resolution is proven in section~\ref{section_quotient_minimal} by computing intersection numbers of the irreducible components of the exceptional divisor.

\subsection{Smoothness of the quotient}\label{quotient_smoothnes}

In order to prove that $Y = W/T$ is smooth we first need to analyze the structure of the set $Z(\Sigma)$ of zeroes of the irrelevant ideal associated to the chosen fan $\Sigma$.

Let us recall that the coordinates of $\C^{n+3}$ are denoted
$$y_0,y_{1,1},\ldots,y_{1,n_1},x_1,y_{2,1},\ldots,y_{2,n_2},x_2,y_{3,1},\ldots,y_{3,n_3},x_3.$$
We say that $y_0$ corresponds to the central ray of $\Sigma$ (i.e. is a monomial dual to the ray in a fan $\Sigma'$ of $\C^{n+3}$ which maps to the central ray in $\Sigma$), $y_{i,j}$ corresponds to the $j$-th ray on the $i$-th branch and $x_i$ corresponds to the $i$-th outer ray.

\begin{lem}\label{S_point_types}
The set $W = S \setminus Z(\Sigma) \subset \C^{n+3}$ consists of three sets of points:
\begin{enumerate}
\item all points in $S$ with all coordinates nonzero,
\item all points in $S$ with one coordinate equal to zero,
\item all points in $S$ with two coordinates equal to zero, such that these coordinates correspond to a pair of adjacent rays on one branch.
\end{enumerate}
It follows that $W$ is independent of the choice of $\Sigma$.
\end{lem}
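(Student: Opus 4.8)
The plan is to reduce the statement to a purely combinatorial question about which sets of rays of $\Sigma$ span a cone, and then to exploit the very rigid shape of the trinomial equation~(\ref{equation_equation_S}). Recall from the toric quotient construction (see \cite[Section~5.1]{ToricBook}) that a point $p \in \C^{n+3}$ lies outside $Z(\Sigma)$ if and only if the set of rays whose corresponding coordinate vanishes at $p$ spans a cone of $\Sigma$. Writing $Z(p) \subseteq \Sigma(1)$ for this set of ``zero rays'' (under the correspondence recalled before the lemma: $y_0$ to the central ray, $y_{i,j}$ to the $j$-th ray of the $i$-th branch, $x_i$ to the $i$-th outer ray), the task is therefore to determine, for $p \in S$, exactly when $Z(p)$ spans a cone of $\Sigma$.

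First I would record the only constraint coming from $S$. Since all exponents in~(\ref{equation_equation_S}) are positive (Lemma~\ref{def_alpha}) and the variable $y_0$ does not occur, the $i$-th monomial $M_i$ vanishes at $p$ precisely when some \emph{non-central} ray of the $i$-th branch lies in $Z(p)$. As $M_1+M_2+M_3=0$ on $S$, the vanishing of any two of the $M_i$ forces the third to vanish as well; hence on $S$ the number of vanishing monomials is $0$, $1$ or $3$, but never exactly $2$. Consequently, if $Z(p)$ contained non-central rays from two different branches, then all three monomials would vanish and $Z(p)$ would contain at least one non-central ray from each branch. Since any cone of the simplicial fan $\Sigma$ has at most three rays, such a $Z(p)$ would force a triple $\rho_1,\rho_2,\rho_3$, one ray from each branch, to span a cone of $\Sigma$ on its own.

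This is where the main obstacle lies: I must show that three rays, one taken from each branch, never span a cone of $\Sigma$, no matter how $\Sigma$ is triangulated. Here I would invoke the fact (proved just before Notation~\ref{notation_fan_properties}) that the central ray $(1,0,0)$ lies in the interior of the cone spanned by the outer rays. By Lemma~\ref{branches_resolution}, the three branches lie in three planes meeting along the central ray, and each branch spans the plane cone between the central ray and its outer ray; passing to a section transverse to all rays, the branches appear as the three segments joining the central point $C$ to the outer vertices $V_1,V_2,V_3$, with $C$ an interior point of the triangle $V_1V_2V_3$. Thus the three spoke directions $V_i-C$ positively span the plane, and any $\rho_i$ on the $i$-th segment satisfies $\rho_i-C = t_i(V_i-C)$ with $t_i>0$, whence $C$ lies in the interior of the triangle $\rho_1\rho_2\rho_3$; equivalently $(1,0,0)$ lies in the relative interior of $\mathrm{cone}(\rho_1,\rho_2,\rho_3)$. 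Since $(1,0,0)$ is itself a ray of $\Sigma$, this triple cannot span a cone of the fan. Therefore $Z(p)$ can contain non-central rays from at most one branch.

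It remains to analyze the case $Z(p) \subseteq \{\text{central ray}\}\cup\{\text{rays of a single branch } i\}$. All these rays lie in one branch plane, so in the section they are collinear on the spoke through $C$ and $V_i$; a collinear set of rays spans a cone of $\Sigma$ only if it is empty, a single ray, or a pair of \emph{adjacent} rays, since otherwise an intermediate ray would lie in the relative interior of the putative cone. These three possibilities are exactly the point types (1), (2) and (3) of the statement, the configuration $y_0=y_{i,1}=0$ being the adjacent pair formed by the central ray and the first ray of a branch. Conversely, each such configuration does span a cone present in every admissible $\Sigma$ --- the empty cone and single rays trivially, and adjacent pairs by Lemma~\ref{lemma_fan_description}(1) --- and is easily realized on $S$, so all three types indeed belong to $W$. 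Finally, since this entire description rests only on the equation of $S$ together with cones that are forced into (resp.\ excluded from) every fan satisfying Notation~\ref{notation_fan_properties}, the set $W$ does not depend on the choice of $\Sigma$, as claimed.
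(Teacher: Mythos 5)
Your proof is correct and follows essentially the same route as the paper's: both reduce membership in $W$ to whether the set of vanishing coordinates spans a cone of $\Sigma$, and both use the trinomial equation of $S$ to show that zeros on two different branches propagate to all three, leaving only the within-one-branch configurations, which survive in every admissible fan by Lemma~\ref{lemma_fan_description}. The one genuine difference is that you explicitly exclude a cone spanned by one ray from each branch via the interior point $(1,0,0)$ --- a case the paper's proof glosses over (it asserts a fourth zero coordinate is forced, which is only literally true when the three zero rays lie on exactly two branches), though it is covered by the same convexity argument the paper already uses in proving Lemma~\ref{lemma_fan_description}, so your version is in fact slightly more careful on this point.
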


\begin{proof} The argument is a straightforward analysis of the structure of the irrelevant ideal $B(\Sigma) = \langle x^{\hat{\sigma}} \colon \sigma \in \Sigma_{max}\rangle$. Recall that $x^{\hat{\sigma}}$ is the product of variables corresponding to all the rays in $\Sigma(1)$ that are not in $\sigma(1)$ and $\Sigma_{max}$ in our case consists of 3-dimensional cones of $\Sigma$. Let us look at the number of coordinates equal to zero in a point in $Z(\Sigma)$.

First of all, if a point has $\geq 4$ zeroes on different coordinates, or 2 or 3 zeroes on coordinates corresponding to the rays whose images do not span a cone in $\Sigma$, then for any cone $\sigma \in \Sigma_{max}$ one of these rays is not in $\sigma$, so $x^{\hat{\sigma}}$ evaluated at this point is 0. Hence all such points belong to $Z(\Sigma)$.

If a point $p \in S$ has 3 zeroes on coordinates corresponding to the rays whose images span a cone in $\Sigma$, then these rays lie on two different branches -- $i$-th and $j$-th. But then monomials in the equation of $S$ (see formula~\ref{equation_equation_S}) which correspond to the $i$-th and $j$-th branch are 0 at $p$, so the third monomial also is 0 at $p$. Hence at least one more coordinate of $p$ is equal to zero. Thus $p \in Z(\Sigma)$, which implies that $W$ does not contain any point with $\geq 3$ zeroes. The same argument works in the case where $p$ has 2 zeroes on the coordinates corresponding to the rays from two different branches.

Therefore the only property of $\Sigma$ on which $W$ depends is the set of 2-dimensional cones spanned by adjacent rays on one branch. But Lemma~\ref{lemma_fan_description} assures that this set is the same in all fans we consider, hence for any choice of $\Sigma$ satisfying conditions of Notation~\ref{notation_fan_properties} one obtains the same $W$.
\end{proof}

We need a following technical observation, which can be proven easily by performing suitable reductions with integral coefficients on columns of $U$.

\begin{lem}\label{spanning_lemma}
If we remove from the extended intersection matrix $U$ any two columns corresponding to a pair of adjacent vertices on one branch of the resolution diagram, the remaining ones generate $\Z^n$.
\end{lem}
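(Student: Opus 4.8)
The plan is to show directly that every standard basis vector of $\Z^n$ lies in the lattice spanned by the surviving columns; this immediately yields that they generate $\Z^n$. I index the rows of $U$ by the exceptional curves $E_0, E_{i,j}$ and write $e_0, e_{i,j}$ for the corresponding standard basis vectors. Reading off the block form of $U$ from Notation~\ref{notation_def_U}, the outer column attached to the $i$-th branch is exactly $e_{i,n_i}$; the central column is $u_0 = -d\,e_0 + e_{1,1} + e_{2,1} + e_{3,1}$; and the column of $E_{i,j}$ is the tridiagonal vector $e_{i,j-1} - a_{i,j}\,e_{i,j} + e_{i,j+1}$, with the convention $e_{i,0} := e_0$ and with the $e_{i,n_i+1}$-term dropped when $j = n_i$. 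The decisive feature is that all off-diagonal entries, as well as the unit entries of $u_0$ and of the outer columns that link consecutive rays, equal $\pm 1$; this is what makes the reductions unimodular.

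This $\pm 1$ structure gives two triangular recovery procedures along each branch. In \emph{backward substitution} one starts from the outer end: since the outer column is $e_{i,n_i}$, the relation coming from the column of $E_{i,n_i}$ expresses $e_{i,n_i-1}$, and inductively the column of $E_{i,j}$ expresses $e_{i,j-1}$ in terms of basis vectors already obtained; for an intact branch this recovers $e_{i,n_i},\ldots,e_{i,1}$ and finally, via the column of $E_{i,1}$, the central vector $e_0$. In \emph{forward substitution} one runs the other way: once $e_0$ and the vectors $e_{k,1}$ for $k\neq i$ are known, the central column $u_0$ gives $e_{i,1}$, after which the column of $E_{i,j}$ expresses $e_{i,j+1}$ from vectors already obtained. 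Each step consumes exactly one not-yet-used column, and the unit coefficients keep every combination integral.

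Now remove the two columns attached to an adjacent pair of rays on the $i_0$-th branch (these are the relevant pairs by Lemma~\ref{S_point_types}). Since only one branch is touched, the other two remain intact, so backward substitution along either of them recovers $e_0$ together with all $e_{k,j}$ for $k\neq i_0$. Deleting an adjacent pair cuts the chain of rays of the $i_0$-th branch into a segment anchored at the central ray and a segment anchored at the outer ray. I will recover the basis vectors of the central-anchored segment by forward substitution, started from $u_0$ using the already-known $e_0$ and $e_{k,1}$ ($k\neq i_0$), and those of the outer-anchored segment by backward substitution, started from the outer column $e_{i_0,n_{i_0}}$. The two deleted columns are precisely the two relations that are never used, so the counts match on each side and every $e_{i_0,j}$ is obtained.

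It remains to treat the boundary pairs, which is the only point needing care. If the deleted pair is $(E_0, E_{i_0,1})$ the central column is gone, so the central-anchored segment is empty: here $e_0$ is supplied by an intact branch, and all of branch $i_0$ is recovered by backward substitution from the outer column (which never calls on the deleted column of $E_{i_0,1}$). If the deleted pair is $(E_{i_0,n_{i_0}}, \mathrm{outer})$ the outer column is gone, so the outer-anchored segment is empty and the whole branch is recovered by forward substitution from $u_0$. In the interior case $(E_{i_0,k}, E_{i_0,k+1})$ both anchors survive and both segments are non-empty. The main (and essentially only) obstacle is this bookkeeping: checking that in each case at least one of the two anchors $u_0$ and the outer column persists on its side of the cut, so that the corresponding triangular substitution can be initiated. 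Once this is verified, the unit-coefficient structure makes the recovery of every $e_r$ automatic, and hence the remaining columns generate $\Z^n$.
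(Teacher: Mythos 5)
Your proof is correct and is essentially a worked-out version of the paper's own justification, which consists of the single remark that the lemma ``can be proven easily by performing suitable reductions with integral coefficients on columns of $U$'' and gives no further detail. Your forward/backward substitutions along the branches, anchored at the central column $u_0$ and at the outer columns $e_{i,n_i}$, are exactly such unimodular column reductions, and your case analysis of the boundary pairs correctly covers all adjacent pairs that actually occur (interior pairs, pairs containing the central ray, and pairs containing an outer ray).
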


\begin{lem}\label{free_action}
$T$ acts freely on $W$.
\end{lem}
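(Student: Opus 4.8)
The plan is to show that the stabilizer of every point of $W$ under the $T$-action is trivial. Recall from Remark~\ref{remark_tori_embedding} that $T$ acts on $\C^{n+3}$ through the homomorphism $T \to (\C^*)^{n+3}$ given by $U^t$ composed with the standard coordinatewise action. Concretely, a point $\ul{t} \in T$ fixes a point $p \in \C^{n+3}$ if and only if $\ul{t}^{u_k} = 1$ for every coordinate $k$ at which $p$ is nonzero, where $u_k$ is the $k$-th column of $U$. Thus the stabilizer of $p$ is cut out inside $T$ by the characters corresponding to the nonzero coordinates of $p$, and it is trivial precisely when those columns of $U$ generate $P \simeq \Z^n$ as a lattice (equivalently, generate the full character lattice, so that the only $\ul{t}$ killing all of them is the identity). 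So the whole statement reduces to a lattice-spanning condition on subsets of columns of $U$.

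First I would translate "stabilizer is trivial" into the spanning statement above, being careful about the direction of the argument: since $T$ is a torus with character lattice $P$, a subset of characters $\{u_k\}$ defines a surjection $T \to (\C^*)^{|S|}$ with trivial kernel exactly when the $u_k$ span $P$ over $\Z$. Then I would invoke the classification of points of $W$ from Lemma~\ref{S_point_types}. In case (1), where all coordinates of $p$ are nonzero, all $n+3$ columns of $U$ are involved, and these certainly span $\Z^n$ (for instance the $n$ columns coming from $U^0$ already do). In case (2), where exactly one coordinate vanishes, we are removing a single column of $U$; since $n+3$ columns already span and removing one leaves $n+2 \geq n$ of them, I would argue that what remains still spans — this follows a fortiori from the stronger statement needed in case (3).

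The crux is case (3), where two coordinates of $p$ vanish, and these correspond to a pair of \emph{adjacent} rays on one branch. Here exactly this situation is covered by Lemma~\ref{spanning_lemma}, which asserts that deleting from $U$ any two columns corresponding to adjacent vertices on one branch leaves columns that generate $\Z^n$. I would simply apply that lemma to conclude that the surviving columns span $P$, hence the stabilizer of $p$ is trivial. Since cases (2) and (1) delete fewer columns (and in more favorable positions) than the adjacent-pair deletion handled by Lemma~\ref{spanning_lemma}, the spanning conclusion holds there too.

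The main obstacle is not in the logical structure — which is an easy reduction to Lemma~\ref{spanning_lemma} — but in making sure the reduction is airtight: namely that the correct notion is generation of the lattice $P$ (not merely the $\R$-vector space), and that the adjacency hypothesis built into case (3) of Lemma~\ref{S_point_types} is exactly what Lemma~\ref{spanning_lemma} requires. I would therefore state explicitly that the stabilizer of $p$ equals $\{\ul{t} \in T : \ul{t}^{u_k} = 1 \text{ for all } k \text{ with } p_k \neq 0\}$, note that this is trivial iff the relevant columns generate $P \simeq \Z^n$, and then dispatch all three cases of $W$ by citing Lemma~\ref{spanning_lemma} (for the tightest case) and observing that fewer deletions only help.
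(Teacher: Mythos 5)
Your proposal is correct and follows essentially the same route as the paper: translate triviality of the stabilizer into the condition that the columns of $U$ corresponding to nonzero coordinates generate the lattice $\Z^n$, run through the three types of points of $W$ from Lemma~\ref{S_point_types}, and reduce the hardest case to Lemma~\ref{spanning_lemma}. Your added care about lattice generation versus $\R$-span is a nice touch but does not change the argument.
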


\begin{proof}
We have to check that a point $p \in W$ cannot have nontrivial isotropy group. Assume that $\ul{t} = (t_0,\ldots, t_{n-1}) \in T$ is such that $\ul{t} p = p$. This means that all characters defining the action, except these corresponding to the coordinates equal to zero in $p$, give 1 evaluated at $\ul{t}$.

Our aim is to show that $t_i=1$ for $i = 0, \ldots, {n-1}$. Because $p$ is of one of three types listed in Lemma~\ref{S_point_types}, this can be reformulated as follows: if we remove from $U$ the columns corresponding to the zero coordinates in $p$ then the remaining columns span the lattice $\Z^n \subset \C^n$. And this result follows directly from Lemma~\ref{spanning_lemma}.
\end{proof}

\begin{prop}\label{prop_quotient_smoothness}
The quotient $Y = W/T$ is smooth.
\end{prop}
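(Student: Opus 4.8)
The plan is to deduce smoothness of $Y = W/T$ from two facts already established: that $T$ acts freely on $W$ (Lemma~\ref{free_action}), and that $W$ itself is smooth. Since a free action of a (connected, hence smooth) algebraic torus on a smooth variety yields a smooth geometric quotient, the whole proposition reduces to verifying that $W$ is smooth. By Remark~\ref{remark_good_quotient} the quotient is already known to be geometric, so I do not need to worry about the quotient structure itself, only about regularity of the total space and the freeness of the action to conclude that the quotient map is a smooth (indeed, a principal $T$-bundle in the \'etale or even Zariski-local sense) morphism, along which smoothness descends.

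So the heart of the argument is to show that $W = S \setminus Z(\Sigma)$ is a smooth variety. Since $S$ is a hypersurface in $\C^{n+3}$ cut out by the single trinomial $f = \sum_{i} y_{i,1}^{(\alpha_i)_1}\cdots y_{i,n_i}^{(\alpha_i)_{n_i}} x_i^{(\alpha_i)_{n_i+1}}$, smoothness at a point $p \in S$ fails exactly when all partial derivatives of $f$ vanish at $p$. First I would compute $\partial f / \partial y_{i,1}$: because $(\alpha_i)_1 = 1$ (Remark~\ref{equation_description}), this derivative is the $i$-th monomial with the factor $y_{i,1}$ removed. I then use the point classification from Lemma~\ref{S_point_types}: the points of $W$ have at most two vanishing coordinates, and when two vanish they sit on adjacent rays of a single branch. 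I would check case by case that in each of the three types of points of $W$ at least one partial derivative is nonzero. For a point with all coordinates nonzero, every partial derivative involving a strictly positive exponent is a nonzero monomial, so $p$ is smooth. For a point with a single zero coordinate, or with two zeros on adjacent positions of one branch, the two other branch-monomials remain entirely nonzero, and differentiating with respect to a variable appearing in one of those two intact branches (e.g.\ the $y_{j,1}$ of an untouched branch $j$, whose derivative strips off only the exponent-one factor) gives a nonzero value at $p$. This shows the gradient of $f$ never vanishes on $W$, so $W$ is smooth.

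I would then assemble the pieces: $W$ is smooth, $T$ acts freely on $W$ (Lemma~\ref{free_action}), and $Y = W/T$ is a geometric quotient (Remark~\ref{remark_good_quotient}). Freeness together with the local triviality of torus quotients (the quotient map is faithfully flat and the fibres are the orbits, all of which are reduced single $T$-orbits isomorphic to $T$) forces the projection $W \to Y$ to be smooth of relative dimension $\dim T$; since the source is smooth and the morphism is smooth and surjective, the target $Y$ is smooth as well.

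I expect the genuinely delicate step to be the derivative-vanishing case analysis, and in particular making precise \emph{which} variable to differentiate against for the type-(3) points, where two adjacent branch coordinates vanish. The subtlety is that one must not differentiate with respect to a variable lying on the same branch as the two zeros, nor rely on a monomial that has already been killed; instead one should exploit that the other two branch-monomials survive and pick the exponent-one variable $y_{j,1}$ at the start of an untouched branch. A secondary point requiring care is the justification that smoothness descends along the free torus quotient — I would either cite a standard statement that geometric quotients by free reductive (here torus) actions on smooth varieties are smooth, or argue directly via the local triviality of the principal bundle, invoking \cite[Proposition~2.3.9]{CoxRings} or an analogous reference already used in Remark~\ref{remark_good_quotient}.
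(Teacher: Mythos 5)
Your proposal is correct and follows essentially the same route as the paper: smoothness of $W$ is established by showing the gradient of the trinomial equation cannot vanish at a point with at most two zero coordinates confined to one branch (equivalently, the paper observes that a singular point of $S$ must have a zero on each of the three branches, hence at least three zeros, and so lies in $Z(\Sigma)$ by Lemma~\ref{S_point_types}), and smoothness then descends to $Y$ via the free $T$-action, which the paper justifies by combining Sumihiro's theorem with Luna's slice theorem. The only cosmetic difference is that you argue the contrapositive of the paper's Jacobian claim and phrase the descent step in terms of local triviality of the principal bundle rather than citing Luna directly.
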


\begin{proof}
We prove that $W$ is smooth by checking that all the singular points of $S$ are in $Z(\Sigma)$. Indeed, if the Jacobian of the equation of $S$ is zero in a point $(y_0, y_{1,1},\ldots, x_1,y_{2,1},\ldots,x_2,y_{3,1},\ldots,x_3)$ then for each $i=1,2,3$ at least one of the coordinates corresponding to a ray from the $i$-th branch is zero. Hence there are at least three coordinates equal to zero and, by Lemma~\ref{S_point_types}, such a point is not in~$W$.

Since $Y$ is a geometric quotient of a smooth variety by a free action of $T$, it is also smooth. A standard reference for such a statement is Luna's slice theorem, but we believe that this particular case can be much simpler. By the classical result of Sumihiro \cite{Sumihiro} any point $w \in W$ has a $T$-invariant affine neighborhood and by applying Luna's theorem \cite{LunaSlice} to this neighborhood we know that the quotient is smooth in the image of $w$.
\end{proof}

\subsection{The quotient is a resolution of $\C^2/G$}\label{section_quotient_resolution}

An embedding of the geometric quotient $Y = W/T$ in a toric variety $X_{\Sigma}$ leads to a construction of a birational morphism $Y \ra \C^2/G$, shown below. We start from describing the toric morphism of ambient spaces.

Let $\Delta \subset N_{\R}$ denote the fan consisting of a cone spanned by the outer rays of $\Sigma$ and all its faces. As before, $\Sigma' \subset N'_{\R}$ is the standard fan of $\C^{n+3}$. Look at the composition $\pi$ of two fan morphisms: $\Sigma' \ra \Sigma$, given by the matrix $K$ (as in Notation~\ref{notation_def_K}) and $\Sigma \ra \Delta$, induced by the identity on $N$. This last homomorphism -- forgetting about all rays except the outer ones -- is a proper birational morphism of $X_{\Sigma}$ to an affine variety, which contracts torus invariant divisors corresponding to the omitted rays.

\begin{lem}\label{lemma_categorical_quotient}
The toric morphism $\C^{n+3} \ra X_{\Delta}$ induced by $\pi$ is a good categorical quotient by the Picard torus action and $\C^{n+3}\gq T = X_{\Delta} \simeq \C^3/Ab(G)$.
\end{lem}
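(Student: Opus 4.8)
The plan is to identify $X_\Delta$ explicitly as a toric variety and then recognize it as $\C^3/Ab(G)$, after which the quotient claim follows from the general toric machinery. First I would recall that $X_\Delta$ is the affine toric variety associated to the single maximal cone $\sigma_\Delta \subset N_\R$ spanned by the three outer rays, whose coordinates are given in Lemma~\ref{lemma_outer_rays} as $(dp_3-q_3, -p_3, -p_3)$, $(-q_2, 0, p_2)$, $(-q_1, p_1, 0)$. Since we have already shown (in the lemma preceding Notation~\ref{notation_fan_properties}) that these three rays span a full-dimensional convex cone containing the central ray in its interior, $\sigma_\Delta$ is a simplicial three-dimensional cone, so $X_\Delta = \Spec \C[\sigma_\Delta^\vee \cap M]$ is an affine toric variety with quotient singularities.

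The central computation is to show $X_\Delta \simeq \C^3/Ab(G)$. For this I would compare $N$ with the sublattice $N_0 \subset N$ generated by the three outer rays. A cone spanned by a lattice basis gives $\C^3$, so the affine toric variety for $\sigma_\Delta$ built with respect to $N_0$ is exactly $\C^3$; passing to the finer lattice $N$ realizes $X_\Delta$ as the quotient $\C^3/(N/N_0)$, where the finite abelian group $N/N_0$ acts through the natural toric description. Thus the key algebraic step is to verify that $N/N_0 \simeq Ab(G)$, i.e.\ that the index of the lattice generated by the outer rays equals $|Ab(G)|$ and, more precisely, that the groups are isomorphic. The order of $Ab(G)$ is known from Corollary~\ref{abelianizations} (equivalently from the formula $|Ab(G)| = |H_1|\cdot|N_2|/(2|[G,G]|)$ in its proof), and the index $[N:N_0]$ is the absolute value of the determinant of the $3\times 3$ matrix whose columns are the outer rays; I expect this determinant to match $|Ab(G)|$ after substituting the continued-fraction data, using the relation between the invariant $\langle d; p_1,q_1;p_2,q_2;p_3,q_3\rangle$ and the group structure recorded in~\cite[Satz~2.11]{Brieskorn}. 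To pin down the group structure (not just the order) one compares the Smith normal form of this matrix with the explicit generators of $Ab(G)$ listed in Corollary~\ref{abelianizations}.

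Finally, once $X_\Delta \simeq \C^3/Ab(G)$ is established, the statement that the toric morphism $\C^{n+3}\ra X_\Delta$ induced by $\pi$ is a good categorical quotient by $T$ follows from the standard toric quotient construction of~\cite[Chapter~5.1]{ToricBook}: the exact sequence $0\ra P^\vee \xra{U^t} N' \xra{K} N\ra 0$ dualizes the monomial-lattice sequence~(\ref{equation_monomial_lattices_map}), and the map $\pi$ is precisely the composite realizing $N$ as the target lattice, so $X_\Delta$ is the toric variety whose Cox construction recovers $\C^{n+3}$ with Picard torus $T$. Because $\Delta$ has a single maximal cone (no rays are discarded within it beyond passing to the affine chart), the associated GIT quotient is the \emph{categorical} one rather than a geometric quotient, which is consistent with the presence of nontrivial isotropy coming from $Ab(G)$.

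I expect the main obstacle to be the determinant and Smith-normal-form computation identifying $N/N_0$ with $Ab(G)$ as groups: matching the order is a finite check via the continued-fraction formulas, but confirming the full isomorphism type (for instance distinguishing $\Z_{2m}\times\Z_2$ from $\Z_{4m}$ in the binary dihedral cases) requires carefully correlating the outer-ray matrix with the explicit abelianization generators, and this is where the case analysis of Proposition~\ref{prop_group_list} and Corollary~\ref{abelianizations} must be invoked rather than a single uniform argument.
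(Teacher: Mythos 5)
Your proposal is correct in outline, but it takes a genuinely different route at the crucial step. Both you and the paper establish the first claim the same way: the good categorical quotient of the affine variety $\C^{n+3}$ by the subtorus $T$ is $\Spec$ of the invariant ring, whose monomials are the points of $M\cap\Z_{\geq 0}^{n+3}$, i.e.\ the lattice points of the dual of the image cone $K(\sigma')=|\Delta|$; this is the standard toric quotient fact. (Your gloss that ``$X_\Delta$ is the toric variety whose Cox construction recovers $\C^{n+3}$'' is not quite right — the Cox construction of the affine variety $X_\Delta$ would be $\C^3$ modulo a finite quasi-torus, not $\C^{n+3}$ — but this does not affect the argument.) The divergence is in identifying $X_\Delta$ with $\C^3/Ab(G)$. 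You pass to the sublattice $N_0\subset N$ spanned by the outer rays, so that $X_\Delta\simeq\C^3/(N/N_0)$, and then propose to verify $N/N_0\simeq Ab(G)$ by computing the determinant $r=dp_1p_2p_3-q_1p_2p_3-p_1q_2p_3-p_1p_2q_3$ of the outer-ray matrix and its Smith normal form case by case against Corollary~\ref{abelianizations}. The paper instead works on the monomial-lattice side and gets the identification for free from a diagram chase: quotienting $M'$ by the basis vectors corresponding to the inner rays gives $M''\simeq\Z^3$, and the induced quotient of $\Pic(X)$ by the subgroup generated by the exceptional divisors is $\Cl(\C^2/G)\simeq Ab(G)$ by Proposition~\ref{prop_cl_for_singularity}, so the cokernel of $M\to M''$ is canonically $Ab(G)$ with no case analysis. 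The paper's argument is uniform and, importantly, produces a canonical isomorphism that is exploited later; your computation is heavier but is essentially forced on the paper anyway in the proof of Lemma~\ref{lemma_image_is_singularity}, where the explicit action of $J=\coker\omega$ on the coordinates must be matched with the action of $Ab(G)$ on the $[G,G]$-invariants — so the work you flag as the main obstacle is real, but it is work the paper defers rather than avoids. One small correction to your closing remark: the quotient fails to be geometric because distinct $T$-orbits in $\C^{n+3}$ are identified in $X_\Delta$ (non-closed orbits), not because of the finite isotropy coming from $Ab(G)$, which lives in the $\C^3\to X_\Delta$ picture rather than in the $T$-action.
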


\begin{proof}
Recall the exact sequence of lattices~\ref{equation_monomial_lattices_map} describing the Picard torus action on $\C^{n+3}$ -- it is the upper horizontal exact sequence in the diagram below. The invariant monomials of this action are lattice points in the intersection of $M$ with the positive orthant in $M'$. Hence, looking at dual lattices, the good categorical quotient $\C^{n+3}/T$ is the affine toric variety corresponding to the image of the positive orthant in $\pi \colon N' \ra N$, that is $X_{\Delta}$ (see e.g.~\cite[Prop.~5.0.9]{ToricBook}). We will now prove that $X_{\Delta}$ is isomorphic to $\C^3/Ab(G)$.

\vspace{-0.25cm}
\begin{equation*}
\xymatrix{
  & & 0 \ar[d] & 0 \ar[d] & \\
  & & \bigoplus \Z[E_i] \ar[d] \ar[r]^{=}& \bigoplus \Z[E_i] \ar[d] & \\
 0 \ar[r] & M \ar[d]_{=} \ar[r] & M' \ar[d] \ar[r]^{U} & \Pic(X) \ar[d] \ar[r] & 0 \\
 0 \ar[r] & M \ar[r] & M'' \ar[d] \ar@{..>>}[r] & Ab(G) \ar[d] & \\
  & & 0 & 0 & \\
               }
\end{equation*}

The left vertical sequence is just dividing the monomial lattice $M'$ of $\C^{n+3}$ by a sublattice spanned by these basis elements which correspond to inner rays. That is, $M'' \simeq \Z^3$ and we consider the positive octant in this lattice, which is the image of~$\Sigma'$. In the right one the quotient of $\Pic(X)$ by the subgroup of divisors contracted by the resolution of the singularity is just $\Cl(\C^2/G)$, which is $Ab(G)$ by Proposition~\ref{prop_cl_for_singularity}.

The dotted arrow from $M''$ to $Ab(G)$ is unique and makes the diagram commute, it is surjective and the lower horizontal sequence is exact. Moreover, all these lattice homomorphisms correspond to homomorphisms of considered fans. Finally, it follows that the lower horizontal sequence gives a description of $X_{\Delta}$ as the (toric) quotient $\C^3/Ab(G)$.
\end{proof}

The situation described by Lemma~\ref{lemma_categorical_quotient} above is the right-hand side part of the following diagram. We would like to understand its left-hand side part, or, more precisely, prove that the image of two gray arrows, which are restrictions of respective morphisms from the right-hand side of the diagram, is isomorphic to the singularity $\C^2/G$, embedded in $\C^3/Ab(G)$. It follows then that the good categorical quotient $\scx \gq T$ is $\C^2/G$.

\vspace{2.15cm}
\hspace{0.5cm}\vbox to 0ex{\vss\centerline{\includegraphics[width=0.6\textwidth]{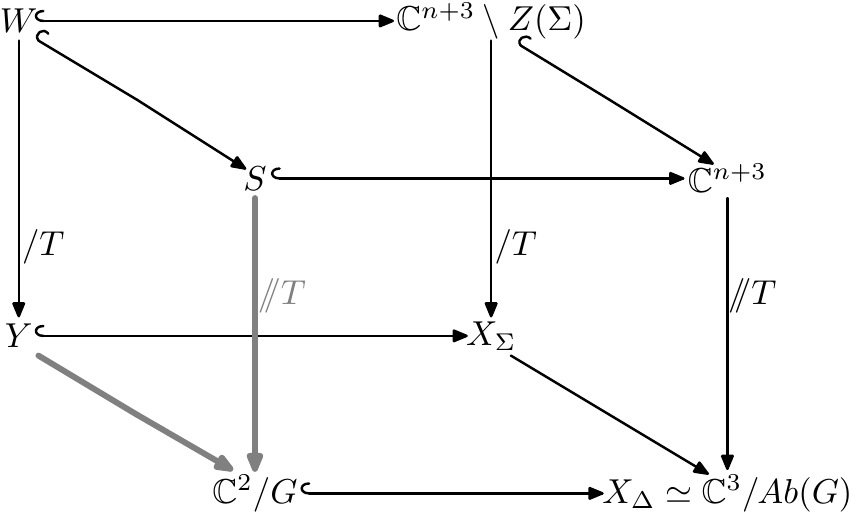}}\vss}
\vspace{2.1cm}

We first consider the (good categorical) quotient $\C^3 \xra{/Ab(G)} X_{\Delta}$ and prove that the image of $S$ and $Y$ (or $W$) in $X_{\Delta}$ can be described as a quotient by $Ab(G)$ of a hypersurface in $\C^3$, given by an equation semi-invariant with respect to the action of $Ab(G)$ (i.e. its eigenvector). Our argument is related to methods used in section~\ref{sect_generators}. Another way of proving this statement would be to analyze lifting of semi-invariants of $Ab(G)$ through $\C^{n+3} \xra{\gq T} X_{\Delta}$, however, the result is also not immediate. The second step of our proof, contained in Lemma~\ref{lemma_image_is_singularity}, is the observation that the quotient of considered hypersurface in $\C^3$ by the action of $Ab(G)$ is indeed $\C^2/G$.

\medskip

First of all, we describe the situation in the toric setting in more detail and introduce a variety $X_{\Gamma} \cap S$, which will be used in the further part of the argument. Since $\Delta$ is simplicial, $X_{\Delta}$ is a quotient of $\C^3$ by a finite group action. Let $N'' \simeq \Z^3$ and $\Gamma$ be the fan consisting of the positive octant in $N''$ and all its faces.

\vspace{-0.5cm}
\begin{equation}\label{equation_two_diagrams}
\xymatrix{
  & N' \ar[r]^K \ar[dr]^{\pi} &  N \ar[d]^{id_N}\\
  & N'' \ar[u]^{\eta} \ar[r]_{\omega} & N      \\
                } \xymatrix{
  & \Sigma' \ar[r]^K \ar[dr]^{\pi} &  \Sigma \ar[d]^{id_N}\\
  & \Gamma \ar[u]^{\eta} \ar[r]_{\omega} & \Delta      \\
                }
\end{equation}
\vspace{-0.25cm}

Then $\omega \colon N'' \ra N$ which sends the standard basis to the rays of $\Delta$ is the toric description of this quotient map. But the embedding $\eta \colon N'' \hookrightarrow N'$, which maps the standard basis to the rays corresponding to variables $x_1$, $x_2$ and $x_3$, commutes with $\pi$ and $\omega$, i.e. the lower triangle in the diagram~\ref{equation_two_diagrams} is commutative.

In coordinates corresponding to the standard bases $\eta$ is just the embedding of $\C^3$ by $x_1$, $x_2$, $x_3$ to the subspace defined by $y_0 = 1$ and $y_{i,j} = 1$ for all possible $i, j$. Therefore the restriction of $S$ to $\C^3 \simeq X_{\Gamma} \subset X_{\Sigma'} \simeq \C^{n+3}$ with coordinates $x_1$, $x_2$, $x_3$ is given by the equation obtained from the equation of $S$ by leaving $x_1$, $x_2$, $x_2$ without change and substituting 1 for all other variables:
\begin{equation}\label{equation_abelian_quotient}
x_1^{p_1} + x_2^{p_2} + x_3^{p_3} = 0.
\end{equation}
Recall that $p_i$ is the last entry of the vector $\alpha_i$ orthogonal to the $i$-th branch (see Lemma~\ref{def_alpha}), appearing also in the description of the minimal resolution by Hirzebruch-Jung continuous fractions and in the formula for the outer rays of $\Sigma$, see Lemma~\ref{lemma_outer_rays}.

\begin{lem}
The images of $X_{\Gamma} \cap S$, $S$ and $W$ in $X_{\Delta}$ (under morphisms corresponding to $\omega$ and $\pi$ respectively) are equal.
\end{lem}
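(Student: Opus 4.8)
The plan is to show that all three images coincide with $\pi(S)$, exploiting the commutative lower triangle of diagram~(\ref{equation_two_diagrams}) together with dimension and irreducibility counts, so that no explicit fibrewise analysis is needed.

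First I would dispose of $X_\Gamma\cap S$. Since the lower triangle of~(\ref{equation_two_diagrams}) commutes, $\pi\circ\eta=\omega$; and as $\eta(X_\Gamma\cap S)=S\cap\eta(X_\Gamma)$ is exactly the slice $S\cap\{y_0=1,\ y_{i,j}=1\}\subseteq S$, we get $\omega(X_\Gamma\cap S)=\pi(\eta(X_\Gamma\cap S))\subseteq\pi(S)$. Now $\pi$ is a good categorical quotient by the (reductive) torus $T$ (Lemma~\ref{lemma_categorical_quotient}) and $S$ is closed and $T$-invariant (Lemma~\ref{lemma_S_is_invariant}), so $\pi(S)$ is closed; being the image of the irreducible hypersurface $S$ (Construction~\ref{equation}) it is also irreducible. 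Because $T$ acts freely on the dense open subset $W\subseteq S$ (Lemma~\ref{free_action}), the generic fibre of $\pi|_S$ is an $n$-dimensional orbit, whence $\dim\pi(S)=\dim S-n=(n+2)-n=2$. On the other hand $\omega$ is the finite quotient morphism $\C^3\to\C^3/Ab(G)=X_\Delta$, hence closed, so $\omega(X_\Gamma\cap S)$ is a closed subset of dimension $2$ (its source being a surface in $\C^3$). A $2$-dimensional closed subset of the irreducible $2$-dimensional variety $\pi(S)$ must equal $\pi(S)$; therefore $\omega(X_\Gamma\cap S)=\pi(S)$.

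It remains to treat $W$, where clearly $\pi(W)\subseteq\pi(S)$. For the reverse inclusion I would first observe that $\pi(W)$ is closed. On $\C^{n+3}\setminus Z(\Sigma)$ the morphism $\pi$ factors as the geometric quotient $q\colon\C^{n+3}\setminus Z(\Sigma)\to X_\Sigma$ given by $K$, followed by the proper birational toric morphism $h\colon X_\Sigma\to X_\Delta$ induced by $\Sigma\to\Delta$. As $W$ is closed and $T$-invariant in $\C^{n+3}\setminus Z(\Sigma)$, its geometric quotient $Y=q(W)$ is closed in $X_\Sigma$, and since $h$ is proper, $\pi(W)=h(Y)$ is closed in $X_\Delta$. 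Finally, $W$ is dense in $S$, being the complement of a proper closed subset of the irreducible variety $S$, so $\overline{W}=S$, and by continuity $\pi(S)=\pi(\overline{W})\subseteq\overline{\pi(W)}=\pi(W)$. Combining the inclusions yields $\pi(W)=\pi(S)=\omega(X_\Gamma\cap S)$, as required.

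The step I expect to be the main obstacle is the $W$ part: establishing that $\pi(W)$ is closed, which cannot be read off directly since $W$ is only locally closed, but must instead be routed through the factorisation $\pi=h\circ q$ and the properness of $h$. The second point that has to be pinned down carefully is the equality $\dim\pi(S)=2$ via the freeness of the $T$-action, since the entire dimension-versus-irreducibility argument collapses without it.
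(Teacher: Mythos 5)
Your argument is correct, but it is a genuinely different proof from the one in the paper. The paper argues orbit by orbit: for a point of $W$ (or $S$) with $y_0=0$ or some $y_{i,j}=0$, the morphism $\pi$ contracts the corresponding divisors, so the whole orbit closure is sent to the image of $0\in X_\Gamma\cap S$; and for a point with all these coordinates nonzero it explicitly produces a torus element (solving $\ul{t}^{u_k}=(\text{coordinate})^{-1}$, using the nonsingularity of the intersection matrix $U_0$) moving the point into the slice $X_\Gamma$, so that every orbit meets $X_\Gamma\cap S$ up to taking images. Your route replaces this explicit computation by soft arguments: closedness of $\pi(S)$ (good quotient of a closed invariant set), of $\omega(X_\Gamma\cap S)$ (finite morphism), and of $\pi(W)$ (geometric quotient followed by the proper toric map $X_\Sigma\to X_\Delta$), combined with irreducibility and a dimension count. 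All the ingredients you invoke are available in the paper (Lemmas~\ref{lemma_S_is_invariant}, \ref{free_action}, \ref{lemma_categorical_quotient}, Remark~\ref{remark_good_quotient}, and the properness of $\Sigma\to\Delta$ noted in Section~\ref{section_quotient_resolution}), so the proof goes through. One small imprecision: the generic fibre of the good quotient $\pi|_S$ is a priori only a union of orbits \emph{containing} a free $n$-dimensional orbit, so what you directly get is $\dim\pi(S)\le 2$; but since $\pi(S)\supseteq\omega(X_\Gamma\cap S)$, which is closed of dimension $2$, the equality follows anyway and nothing is lost. The trade-off: the paper's computation additionally identifies \emph{where} each orbit goes (in particular that all exceptional divisors contract to the singular point), while your argument is less explicit but avoids coordinates entirely and would adapt more readily to higher-dimensional generalizations.
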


\begin{proof}
Take any point $$p = (y_0,y_{1,1},\ldots,y_{1,n_1},x_1,y_{2,1},\ldots,y_{2,n_2},x_2,y_{3,1},\ldots,y_{3,n_3},x_3)\in W.$$ First assume that $y_0=0$ or some $y_{i,j} = 0$. But $\pi$ forgets rays of $\Sigma'$ corresponding to these coordinates, so whole $T$-orbits given by these equalities are mapped to 0. Hence also closures of these orbits are mapped to 0, and we are left with the situation when all coordinates $y_{i,j}$ and $y_0$ are nonzero. But the orbit of such a point $p$ contains a point of $S\cap X_{\Gamma}$. It is sufficient to find $\ul{t} = (t_0,\ldots,t_{n-1}) \in t$ such that $\ul{t}^{u_k}$, where $u_k$ is the $k$-th column of the extended intersection matrix $U$, is the inverse of the $k$-th coordinate of $p$, excluding the coordinates corresponding to $x_1$, $x_2$, $x_3$. Such a set of equations has a solution if only the columns of the intersection matrix $U_0$ are linearly independent, which is true. Hence each orbit in $W$ is mapped to a point of the image of $S\cap X_{\Gamma}$ in $X_{\Delta}$ and the other inclusion is obvious.
\end{proof}

Therefore from now on we consider the image of the restriction of $S$ to $X_{\Gamma}$ in $X_{\Delta}$ instead of the image of $W$ or $S$.

\begin{lem}\label{lemma_image_is_singularity}
The image of $S\cap X_{\Gamma}$ in $X_{\Delta}$ is isomorphic to $\C^2/G$.
\end{lem}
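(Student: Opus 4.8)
The plan is to identify the image of $S \cap X_{\Gamma}$ inside $X_{\Delta} \simeq \C^3/Ab(G)$ by working at the level of $\C^3$ upstairs and then taking the quotient by $Ab(G)$. By the previous lemma, $X_{\Delta} = \C^3/Ab(G)$, where the action comes from $\omega \colon N'' \to N$ sending the standard basis to the outer rays of $\Sigma$, whose coordinates are given in Lemma~\ref{lemma_outer_rays}. The hypersurface $S\cap X_{\Gamma}$ sits in the $\C^3$ with coordinates $x_1, x_2, x_3$ and is cut out by the equation~\eqref{equation_abelian_quotient}, namely $x_1^{p_1} + x_2^{p_2} + x_3^{p_3} = 0$. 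So the image of $S\cap X_{\Gamma}$ in $X_{\Delta}$ is the quotient of the hypersurface $\{x_1^{p_1} + x_2^{p_2} + x_3^{p_3} = 0\} \subset \C^3$ by the induced $Ab(G)$-action, and the goal is to show this quotient is isomorphic to $\C^2/G$.

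First I would make the $Ab(G)$-action on $\C^3$ explicit. The action is diagonal, and the weight of $Ab(G)$ on the coordinate $x_i$ is read off from the column of the toric data corresponding to the $i$-th outer ray; concretely, the cokernel map $N'' \to N \to N/\omega(\text{inner data})$ recovers $Ab(G)$ as in the diagram of Lemma~\ref{lemma_categorical_quotient}, and the characters by which $Ab(G)$ scales $x_1, x_2, x_3$ are determined by the outer rays $(-q_1, p_1, 0)$, $(-q_2, 0, p_2)$, $(dp_3 - q_3, -p_3, -p_3)$. The key numerical point is that the equation $x_1^{p_1} + x_2^{p_2} + x_3^{p_3}$ is semi-invariant (an eigenvector) under this action, since each monomial $x_i^{p_i}$ carries the same $Ab(G)$-character — this is exactly the reflection at the level of $\C^3$ of Lemma~\ref{lemma_S_is_invariant}, where the full torus $T$ scaled the whole trinomial equation by the single character $t_0$. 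Hence the hypersurface is $Ab(G)$-invariant and the quotient is well-defined.

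Next I would exhibit the isomorphism with $\C^2/G$ by comparing invariant rings. The most transparent route is to parametrize the smooth locus of $\{x_1^{p_1} + x_2^{p_2} + x_3^{p_3} = 0\}$ and match the ring of $Ab(G)$-invariants on this hypersurface with the ring of $G$-invariants $\C[x,y]^G$. I expect to use the fibre-product structure of $G$ from Proposition~\ref{prop_group_list} together with the commutator and abelianization computations of Lemma~\ref{lemma_commutators}, Lemma~\ref{commutators} and Corollary~\ref{abelianizations}: the exponents $p_1, p_2, p_3$ are precisely the invariants of the binary polyhedral factor $[G,G]$ (the degrees of the generators of $\C[x,y]^{[G,G]}$, satisfying a single relation of this trinomial shape), and the residual $Ab(G) = G/[G,G]$-action then cuts $\C[x,y]^{[G,G]}$ down to $\C[x,y]^G$. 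Thus the chain of quotients $\C^2 \to \C^2/[G,G] \to \C^2/G$ is realized geometrically as $\C^2 \to \{x_1^{p_1}+x_2^{p_2}+x_3^{p_3}=0\} \to (\{x_1^{p_1}+x_2^{p_2}+x_3^{p_3}=0\})/Ab(G)$, and identifying the first map with the classical embedding of the Du~Val/binary-polyhedral singularity $\C^2/[G,G]$ as a hypersurface in $\C^3$ finishes the argument.

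The main obstacle will be the middle step: rigorously matching the induced $Ab(G)$-weights on $x_1, x_2, x_3$ (extracted from the outer-ray data) with the weights of the $G/[G,G]$-action on the three fundamental invariants of $[G,G]$, and verifying that these agree across all the families in Proposition~\ref{prop_group_list}~(3). This is essentially a case-by-case bookkeeping check, tying the Brieskorn continued-fraction data $\langle d; p_1, q_1; p_2, q_2; p_3, q_3\rangle$ to the explicit generators of $Ab(G)$ listed in Corollary~\ref{abelianizations}. I would handle it by first treating the $SL(2,\C)$ case $m=1$, where $Ab(G)$ acts trivially on the relevant invariants and the statement reduces to the classical presentation of the Du~Val singularity $\C^2/[G,G] = \C^2/G$ as a hypersurface, and then propagating through the fibre-product construction to the general $G$, where the extra cyclic central factor supplies exactly the $Ab(G)$-action needed to pass from $\C^2/[G,G]$ down to $\C^2/G$.
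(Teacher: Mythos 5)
Your overall strategy coincides with the paper's: realize the image as the quotient of the hypersurface $\{x_1^{p_1}+x_2^{p_2}+x_3^{p_3}=0\}\subset\C^3$ by the finite group $J=\coker\omega$, identify that hypersurface with $\C^2/[G,G]$ and $J$ with $Ab(G)$, and then verify that this identification is equivariant by matching the weights of $J$ on $x_1,x_2,x_3$ (read off from the outer rays) against the weights of $Ab(G)$ on a generating triple of $\C[a,b]^{[G,G]}$. This, including the case-by-case bookkeeping you anticipate as the main obstacle, is exactly the paper's proof.

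However, your plan for executing that bookkeeping contains a genuine error. You propose to first treat the case $m=1$, ``where $Ab(G)$ acts trivially on the relevant invariants and the statement reduces to the classical presentation of the Du Val singularity $\C^2/[G,G]=\C^2/G$.'' This is false for every small $G\subset \SL(2,\C)$ except $\BI$: by Lemma~\ref{lemma_commutators}, $Ab(\BD_n)$ is $\Z_2\times\Z_2$ or $\Z_4$, $Ab(BT)\simeq\Z_3$, $Ab(BO)\simeq\Z_2$, and these act nontrivially on the generators of $\C[a,b]^{[G,G]}$. Concretely, for $G=\BD_n$ the hypersurface $x_1^2+x_2^2+x_3^n=0$ is the $A_{n-1}$ singularity $\C^2/[G,G]=\C^2/\Z_n$, not the $D_{n+2}$ singularity $\C^2/G$; the residual $Ab(G)$-quotient is already essential at $m=1$, so there is no trivial base case from which to ``propagate through the fibre-product construction.'' The comparison of actions has to be done directly for each family, as the paper does. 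Two further points your sketch glosses over: (i) the generating set of $\C[a,b]^{[G,G]}$ must be chosen to consist of $Ab(G)$-eigenvectors, which for $BT_m$ (where $[G,G]=\BD_2$) forces replacing $x^4+y^4$ and $x^2y^2$ by suitable linear combinations; and (ii) it is not enough that $J$ and $Ab(G)$ are abstractly isomorphic with matching character subgroups --- one must exhibit a single generator of $J$ whose weight vector on $(x_1,x_2,x_3)$ literally equals that of a chosen generator of $Ab(G)$ on the invariants, which in the paper's $BO_m$ computation requires replacing the naive generator $(0,1,0)$ by $(0,m+2,0)$.
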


\begin{proof}
From the table in~\cite[Satz~2.11]{Brieskorn} we can read out the parameters of the minimal resolution of $\C^2/G$, i.e. the invariant $\langle d; p_1,q_1;p_2,q_2;p_3,q_3\rangle$ describing the Hirzebruch-Jung continuous fractions associated with the resolution. Substituting values of $p_i$ into equation~(\ref{equation_abelian_quotient}) we obtain the following equations of $S\cap X_{\Gamma}$:

\begin{center}
\begin{tabular}{lll}
  $\BD_{n,m}$ & : & $x_1^2+x_2^2+x_3^n = 0$ \\
  $BT_m$ & : & $x_1^2+x_2^3+x_3^3 = 0$ \\
  $BO_m$ & : & $x_1^2+x_2^3+x_3^4 = 0$ \\
  $\BI_m$ & : & $x_1^2+x_2^3+x_3^5 = 0$
\end{tabular}
\end{center}

Comparing with Lemma~\ref{commutators} and \cite[Table~1]{duval} we see that for a group $G$ the equation above is just an equation of an embedding of the quotient singularity $\C^2/[G,G]$ in~$\C^3$. (For $G = BT_m$, i.e. $[G,G] = \BD_2$, the equation is most often given in the form $x_1^2 + x_2^3 + x_2x_3^2 = 0$, but it is the same up to a change of coordinates.)

Recall that $X_{\Delta}$ is a quotient of $\C^3$ by an action of a finite group $J = \coker \omega$. The image of $W$ in $X_{\Delta}$ is then the quotient of $S \cap X_{\Gamma}$ by $J$. We can write $\omega$ in the standard basis using the matrix with the outer vectors of $\Sigma$ as columns. For all considered groups it is easy to check that $J$ is isomorphic to the abelianization of $G$. One has to use again the numbers $p_i$, $q_i$ associated with the minimal resolution (from \cite[Satz~2.11]{Brieskorn}) to write down the outer rays and describe $J$, and then compare with the abelianizations of small subgroups of $\GL(2,\C)$ computed in Corollary~\ref{abelianizations}.

Our aim is now to prove that the the quotient $(S\cap X_{\Gamma}) / J$ is isomorphic to $\C^2/G \simeq (\C^2/[G,G])/Ab(G)$. Thus we have to argue that the isomorphism between $S \cap X_{\Gamma}$ and $\C^2/[G,G]$ is equivariant with respect to considered actions of $J \simeq Ab(G)$. We do this by comparing the actions on the coordinate rings: the action of generators of $J$ on the chosen coordinates of $X_{\Gamma}$ turn out to be identical to the action of the corresponding generators of $Ab(G)$ on the $[G,G]$-invariants which satisfy the equation of~$S\cap X_{\Gamma}$.

The action of $Ab(G)$ on the invariants of $[G,G]$ is quite easy to describe. We sketch the idea here and give an example of computations below. Sets of generators of $\C[x,y]^{[G,G]}$ for small subgroups $G \subset \GL(2,\C)$ are listed for example in~\cite{invariants}. However, not every (minimal) generating set can be used here. We need a set of generators which are eigenvectors of the action of $Ab(G)$, because coordinates of $X_{\Gamma}$ satisfy this condition. For most types of groups the invariants given in~\cite{invariants} are eigenvectors of $Ab(G)$ (and, in fact, there is no other choice of minimal generating set), only in the case of $BT_m$, where the commutator subgroup is $\BD_2$, one has to take suitable linear combinations of $x^4+y^4$ and $x^2y^2$ (we give more details on the $[G,G]$-invariants which are eigenvectors of the action of $Ab(G)$ in section~\ref{sect_generators}, we list them in Example~\ref{example_invariants_eigenvalues}.) Finally, we take some representatives of the generating classes of $Ab(G)$ and determine their action on the chosen invariants by an explicit computation.

To describe the action of $J \simeq Ab(G)$ on variables $x_1$, $x_2$, $x_3$ corresponding to the rays of $\Gamma$ we take a vector of $N$ representing a generator and evaluate it on the dual characters to the rays of $\Delta$, which are
\begin{align*}
u_1 &= \frac{1}{r}(p_1p_2, q_1p_2, q_2p_1),\\
u_2 &= \frac{1}{r}(p_1p_3, q_1p_3, dp_1p_3-p_1q_3-q_1p_3),\\
u_3 &= \frac{1}{r}(p_2p_3, dp_2p_3-p_2q_3-q_2p_3,q_2p_3),
\end{align*}
where $$r = dp_1p_2p_3 - q_1p_2p_3 - p_1q_2p_3 - p_1p_2q_3$$ is the order of $J$ (equal to the determinant of the matrix which has the outer rays as columns).

In all the cases of cyclic abelianizations one can take as a generator of $J$ one of the standard basis vectors. In the only non-cyclic case ($\BD_{n,m}$ for even $n$) the generators can be chosen for example $(0,1,0)$ and $(-1,1,1)$. However, these generators do not necessarily give the same action of $Ab(G)$ on the chosen $[G,G]$-invariants, so one has to find a suitable power of a generator to get exactly the same numbers. We have checked that such generators can be found in all the cases. As all the computations are very similar, we end the proof by presenting only a chosen case in detail.

\medskip

Let us look at the action of $G = BO_m$. Recall that we have to assume $(m,6)=1$. First, the generators of the invariants of $[G,G] = BT$ are
\begin{align*}
w_1 &= x^5y - xy^5,\\
w_2 &= x^8 + 14x^4y^4 + y^8,\\
w_3 &= x^{12} - 33x^8y^4 - 33x^4y^8 + y^{12},
\end{align*}
which, up to some constants, satisfy the relation $w_1^4 + w_2^3 +w_3^2 = 0$. As stated in Corollary~\ref{abelianizations}, $Ab(G) \simeq \Z_{2m}$ is generated by $g = \eps_{2m}\cdot \diag(\eps_8,\eps_8^{-1})$. The action on the $[G,G]$-invariants is
\begin{align*}
g\cdot w_1 &= -\eps_{2m}^6\cdot w_1 = \eps_{2m}^{m+6}\cdot w_1,\\
g\cdot w_2 &= \eps_{2m}^8 \cdot w_2,\\
g\cdot w_3 &= -\eps_{2m}^{12}\cdot w_3 = \eps_{2m}^{m+12}\cdot w_3.
\end{align*}
Now look at the action of $J$. Take $v = (0,1,0) \in N$. Then $$u_1(v) = \frac{3}{r} = \frac{3}{2m}.$$ The equality $r = 2m$ can be obtained directly from the parameters of resolutions given in~\cite[Satz~2.11]{Brieskorn}. As $m$ is not divisible by 3, $v$ is of order $2m$ in $J$, so it is a generator. The weights of its action are
\begin{align*}
2m\cdot u_1(v) &= 3,\\
2m\cdot u_2(v) &= 4,\\
2m\cdot u_3(v) &= 12d - 3q_3 - 4q_2 = m+6.
\end{align*}
Take $v' = (0,m+2,0) \in N$, which is also a generator of $J$ because $m$ is odd. Then
\begin{align*}
2m\cdot u_1(v') &= 3(m+2) \equiv m+6 \mod 2m,\\
2m\cdot u_2(v') &= 4(m+2) \equiv 8 \mod 2m,\\
2m\cdot u_3(v') &= (m+6)(m+2) = m^2 + 12 + 8m \equiv m+12 \mod 2m,
\end{align*}
hence both considered actions are the same.
\end{proof}

The observations made above are summarized in the following statement.

\begin{prop}\label{sing_quot}
The good categorical quotient $\C^{n+3} \xra{\gq T} \C^3/Ab(G)$ restricts to the good categorical quotient $S \xra{\gq T} \C^2/G$, which induces a birational morphism from $Y = W/T$ onto $\C^2/G$.
\end{prop}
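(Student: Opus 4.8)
The plan is to assemble Proposition~\ref{sing_quot} directly from the lemmata already established in this section, since nearly all the genuine content has been proven. The key observation is that the entire argument is organized as a commutative diagram of good categorical quotients: on the ambient level, Lemma~\ref{lemma_categorical_quotient} already gives $\C^{n+3} \gq T \simeq X_{\Delta} \simeq \C^3/Ab(G)$, and the task is to restrict this quotient to the invariant hypersurface $S$ and identify the restricted quotient with $\C^2/G$.

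First I would recall that $S$ is $T$-invariant (Lemma~\ref{lemma_S_is_invariant}), so the good categorical quotient morphism $\C^{n+3} \gq T \to X_{\Delta}$ restricts to a good categorical quotient of $S$; formally this uses the fact that a good categorical quotient restricts to a good categorical quotient on a closed invariant subvariety (as in \cite[Prop.~2.3.9]{CoxRings} or the analogous statement for affine GIT). Then I would invoke the immediately preceding lemmata: the unnamed lemma showing that the images of $X_{\Gamma}\cap S$, $S$, and $W$ in $X_{\Delta}$ all coincide, together with Lemma~\ref{lemma_image_is_singularity}, which identifies the image of $S\cap X_{\Gamma}$ in $X_{\Delta}$ with $\C^2/G$. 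Combining these gives that the target of the restricted quotient $S \gq T$ is exactly $\C^2/G$, completing the first assertion.

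For the second assertion I would pass from the good categorical quotient to the geometric quotient $Y = W/T$. Since $W = S\setminus Z(\Sigma)$ is an open $T$-invariant subset of $S$ and the good categorical quotient $S\gq T$ is $\C^2/G$, the restriction to $W$ gives a morphism $Y = W/T \to S\gq T = \C^2/G$. That this morphism is birational is the key geometric point: on the open locus where all coordinates $y_0, y_{i,j}$ are nonzero, the $T$-action can be used (by the same solvability argument as in the preceding lemma, using linear independence of the columns of $U_0$) to normalize these coordinates to $1$, so that $Y$ maps isomorphically onto the smooth locus of $S\cap X_{\Gamma}$, i.e.\ onto the complement of the origin in $\C^2/G$. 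Thus $Y \to \C^2/G$ is an isomorphism over the smooth locus and is therefore birational.

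The main obstacle here is not in this proposition itself, which is essentially a bookkeeping assembly of the three preceding lemmata, but rather in making precise the claim that a good categorical (affine GIT) quotient restricts compatibly to the closed invariant hypersurface and that the induced map on geometric quotients $Y\to \C^2/G$ is genuinely birational rather than merely dominant. I would handle the birationality by exhibiting an explicit open dense subset of $Y$ over which the map is an isomorphism, namely the image in $Y$ of the big-torus orbit inside $W$, which maps isomorphically to the open torus orbit of $\C^2/G$; the inverse is constructed via the coordinate-normalization argument indicated above. Everything else follows formally from functoriality of good quotients under restriction to invariant subsets, so no further substantial computation is required beyond what Lemma~\ref{lemma_image_is_singularity} has already carried out.
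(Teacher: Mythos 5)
Your proposal is correct and matches the paper's treatment: the paper gives Proposition~\ref{sing_quot} no standalone proof at all, introducing it with ``the observations made above are summarized in the following statement,'' so it is exactly the bookkeeping assembly of Lemma~\ref{lemma_categorical_quotient}, the lemma identifying the images of $X_{\Gamma}\cap S$, $S$ and $W$, and Lemma~\ref{lemma_image_is_singularity} that you describe, with birationality made explicit only in Corollary~\ref{corollary_quotient_resolution} via the fan morphism $\Sigma\to\Delta$ being the identity on the orbits over $(\C^2/G)\setminus\{0\}$. One small imprecision in your normalization argument: setting all $y_0,y_{i,j}$ equal to $1$ is only possible up to the finite kernel of $T\to(\C^*)^n$, which is exactly $\Hom(Ab(G),\C^*)$, so the slice maps onto $\C^2/G\setminus\{0\}$ as an $Ab(G)$-quotient of $(\C^2/[G,G])\setminus\{0\}$ rather than ``isomorphically onto the smooth locus of $S\cap X_{\Gamma}$'' --- your final conclusion is nevertheless the right one.
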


\begin{cor}\label{corollary_quotient_resolution}
The quotient $Y = W/T$ is a resolution of the singularity $\C^2/G$.
\end{cor}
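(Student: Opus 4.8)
The plan is to verify the definition of a resolution directly: I need a proper birational morphism onto $\C^2/G$ whose source is smooth, and two of these three ingredients are already available. Smoothness of $Y$ is exactly Proposition~\ref{prop_quotient_smoothness}, while Proposition~\ref{sing_quot} supplies the birational morphism $\phi\colon Y = W/T \ra \C^2/G$, realized as the restriction of the good categorical quotient $\C^{n+3}\gq T = X_{\Delta} \simeq \C^3/Ab(G)$. Hence the only property left to establish is that $\phi$ is proper.

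For properness I would use the commutative square relating the embeddings $Y \hookrightarrow X_{\Sigma}$ and $\C^2/G \hookrightarrow X_{\Delta}$. The morphism $\phi$ is the restriction to $Y$ of the toric morphism $X_{\Sigma} \ra X_{\Delta}$ induced by the identity on $N$, i.e.\ the map forgetting all rays of $\Sigma$ except the outer ones. By Notation~\ref{notation_fan_properties} the support of $\Sigma$ is precisely the cone spanned by the outer rays, which is the support of $\Delta$; thus $\Sigma$ refines $\Delta$ with the same support, and the associated toric morphism is proper (see e.g.~\cite[Theorem~3.4.11]{ToricBook}), as was already noted in the text preceding Lemma~\ref{lemma_categorical_quotient}. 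Since $Y$ is closed in $X_{\Sigma}$ and $\phi(Y) = \C^2/G$ lies in the closed subvariety $\C^2/G \subset X_{\Delta}$, the restriction $\phi$ of a proper morphism is again proper.

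Combining the three facts — $Y$ smooth, $\phi$ birational, $\phi$ proper — shows that $\phi\colon Y \ra \C^2/G$ is a resolution. One may further observe that, $\phi$ being proper and birational onto the normal variety $\C^2/G$ whose only singular point is the image of $0$, it is automatically an isomorphism over the smooth locus, so no exceptional fibres occur away from the singularity. I expect the properness step to be the only point requiring genuine care: one must check that the relevant square truly commutes and that $Y$ and $\C^2/G$ sit as closed subvarieties in $X_{\Sigma}$ and $X_{\Delta}$ respectively, so that properness descends from the ambient toric morphism; everything else is a formal consequence of the results already proved in this section.
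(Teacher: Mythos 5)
Your proof is correct and follows essentially the same route as the paper: both rest on Proposition~\ref{prop_quotient_smoothness} for smoothness, on Proposition~\ref{sing_quot} for the birational morphism, and on the toric morphism $X_{\Sigma}\ra X_{\Delta}$ induced by the identity on $N$, whose properness is automatic because $\Sigma$ refines $\Delta$ with the same support (Notation~\ref{notation_fan_properties}); your explicit properness check is a welcome elaboration of what the paper leaves implicit. One caveat on your closing remark: a proper birational morphism onto a normal variety is \emph{not} automatically an isomorphism over the smooth locus (blowing up a smooth point of $\C^2/G$ away from the origin is a proper birational morphism from a smooth surface, yet has an exceptional curve over a smooth point), so the word ``automatically'' is unjustified; the paper instead obtains the isomorphism over $(\C^2/G)\setminus\{0\}$ directly from the toric picture, observing that the fan map $\Sigma\ra\Delta$ is the identity on every proper face of the maximal cone of $\Delta$ and hence on the corresponding orbits. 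Since the corollary as stated only requires a proper birational morphism from a smooth variety, this flaw affects only your optional final observation, not the proof itself.
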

\begin{proof}
The birational morphism from $Y$ to $\C^2/G$ constructed above is induced by the fan morphism from $\Sigma$ to the fan $\Delta$, which consists of a cone spanned by the outer rays of $\Sigma$ and all its faces. This homomorphism is induced by the identity on the lattice $N$, it is just forgetting about all the rays except the outer ones. Therefore it gives the identity on the orbits corresponding to all the faces of the maximal cone of $\Delta$, i.e. on $(\C^2/G)\setminus \{0\}$.
\end{proof}

\subsection{Minimality}\label{section_quotient_minimal}

We prove that $Y = W/T$ is in fact the minimal resolution of the considered quotient singularity. Moreover, we explain how the class groups of $Y$ and $X_{\Sigma}$ are related, which will be needed in the proof of Proposition~\ref{prop_T-factorial}.

By $\rho_{i,j}$ for $i=1,2,3$ and $1 \leq j \leq n_i+1$ we denote the $j$-th ray on the $i$-th branch in the fan of $\Sigma$. The central ray is denoted $\rho_0$.

\begin{notation}
Let $D_{i,j}$ be the torus invariant divisor in $X_{\Sigma}$ corresponding to $\rho_{i,j}$ and $D_0$ corresponds to $\rho_0$. Then $C_0 = Y\cap D_0$ and $C_{i,j} = Y\cap D_{i,j}$ for $j \leq n_i$ are the exceptional curves of the map from $Y$ to $\C^2/G \subset X_{\Delta}$ constructed in Proposition~\ref{sing_quot}. Note that the curve $C_{i,n_i+1} = Y\cap D_{i,n_i+1}$ is not exceptional.
\end{notation}

First we show that for any ray $\rho_{i,j}$ one can choose a (non-unique) simplicial fan $\Sigma$ such that $D_{i,j}$ is isomorphic to a Hirzebruch surface.

\begin{lem}\label{lemma_hirzebruch_div}
Let $\Sigma$ be a fan satisfying conditions in Notation~\ref{notation_fan_properties} and such that each of $\rho_{i,j-1}$, $\rho_{i,j}$ and $\rho_{i,j+1}$ lies in 2-dimensional faces with the first rays on two other branches (if $j=0$ we put $\rho_{i,j-1} = \rho_0$), see Fig.~\ref{figure_hdiv}. Then $D_{i,j}$ is isomorphic to the Hirzebruch surface $F_{(\gamma_i)_{j+1}}$, where $(\gamma_i)_{j+1}$ is the $(j+1)$-st entry of the vector $\gamma_i \in \ker A_i''$ from formula~(\ref{equation_beta_gamma}).
\end{lem}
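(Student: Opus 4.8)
The plan is to compute the structure of the torus-invariant divisor $D_{i,j}$ inside the toric variety $X_{\Sigma}$ by using the standard orbit-cone correspondence: the divisor corresponding to a ray $\rho_{i,j}$ is itself a toric variety, whose fan is the \emph{star} of $\rho_{i,j}$, obtained by projecting all cones of $\Sigma$ containing $\rho_{i,j}$ onto the quotient lattice $N(\rho_{i,j}) = N/(\Z\rho_{i,j})$ (see \cite[Proposition~3.2.7]{ToricBook}, already invoked in Lemma~\ref{lemma_fan_description}). First I would enumerate precisely which maximal cones of $\Sigma$ contain $\rho_{i,j}$ under the hypothesis of the lemma. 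By assumption each of $\rho_{i,j-1}, \rho_{i,j}, \rho_{i,j+1}$ lies in $2$-dimensional faces together with the first rays on the two other branches, so the cones meeting $\rho_{i,j}$ are exactly those spanned by $\rho_{i,j}$ together with one of its two branch-neighbours $\rho_{i,j\pm1}$ and one of the two ``first rays'' of the other branches. This gives four maximal ($3$-dimensional) cones around $\rho_{i,j}$, so the projected fan in $N(\rho_{i,j}) \simeq \Z^2$ will have four rays, which is exactly the combinatorial type of a Hirzebruch surface.

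Next I would carry out the projection explicitly. After quotienting by $\Z\rho_{i,j}$, the two neighbours $\rho_{i,j-1}$ and $\rho_{i,j+1}$ project to a pair of opposite rays (call them $\pm e_1$), because three consecutive rays on one branch lie in a single plane through the origin and satisfy a linear relation governed by the Hirzebruch--Jung data of $A_i$ — this is precisely the relation encoded by $A_i'' $ and the kernel vectors of Notation~\ref{notation_def_K}. The two ``first rays'' of the other branches, namely $(0,1,0)$ and $(0,0,1)$ (equivalently $(0,1,0)$ and $(d,-1,-1)$ depending on which branch $i$ is), project to the two remaining rays of the Hirzebruch fan, say $e_2$ and $-e_2 + a e_1$ for some integer $a$. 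The number $a$ is the Hirzebruch index, and the core of the computation is identifying $a = (\gamma_i)_{j+1}$. This should fall out of the relation $\gamma_i \in \ker A_i''$ from formula~(\ref{equation_beta_gamma}): the entries of $\gamma_i$ record exactly the coefficients of the linear dependence among the rays on the $i$-th branch together with the central ray, and reading off the coefficient attached to the $(j+1)$-st position gives the twisting number of the ruling.

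Concretely, I would choose coordinates on $N(\rho_{i,j})$ adapted to the situation, compute the images of the four relevant rays as integer vectors, and verify that after a unimodular change of basis they take the canonical form $e_1, -e_1, e_2, -e_2 + (\gamma_i)_{j+1} e_1$, which is the defining fan of $F_{(\gamma_i)_{j+1}}$ (see \cite[Example~3.1.16]{ToricBook} or the discussion of Hirzebruch surfaces in \cite[Chapter~5]{ToricBook}). The smoothness of these cones, needed for the projected fan to be a genuine smooth Hirzebruch surface, follows from the fact that adjacent rays on a branch form a lattice basis of the plane containing them (Remark~\ref{remark_matrix_cyclic_sing} and Lemma~\ref{branches_resolution}(3)), together with the smoothness of the central cones established in Lemma~\ref{lemma_fan_description}.

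The main obstacle I anticipate is the careful bookkeeping in the projection step: extracting the correct integer $a$ and matching it to $(\gamma_i)_{j+1}$ rather than to $(\beta_i)_{j+1}$ or some shifted index requires tracking how the central ray $\rho_0 = (1,0,0)$ and the outer rays interact with the branch, and in particular handling the boundary case $j=0$ (where $\rho_{i,j-1}=\rho_0$) separately, since there the neighbouring structure is governed by the three central smooth cones of Lemma~\ref{lemma_fan_description} rather than by two generic branch rays. Verifying that the index picked up by the ruling is exactly the $\gamma_i$-entry, and not its $\beta_i$-counterpart, is where the self-intersection number $-d$ of the central curve enters (recall $\gamma_i = \beta_i + d\,\alpha_i'$ from the proof of Lemma~\ref{lemma_outer_rays}), so I would expect the identification of $a$ with $(\gamma_i)_{j+1}$ to rest on exactly that relation.
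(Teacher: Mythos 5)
Your proposal follows essentially the same route as the paper: both invoke \cite[Proposition~3.2.7]{ToricBook} to realize $D_{i,j}$ as the toric surface of the star of $\rho_{i,j}$, enumerate the four maximal cones around $\rho_{i,j}$ guaranteed by the hypothesis, project the four neighbouring rays along $\rho_{i,j}$, and read off the Hirzebruch index from the resulting relation — the paper's explicit computation shows the index equals the first coordinate of $\rho_{i,j}$, which is $(\gamma_i)_{j+1}$ by Notation~\ref{notation_def_K} (with branches $1,2$ reduced to branch $3$ by a lattice automorphism, which is exactly where the relation $\gamma_i=\beta_i+d\alpha_i'$ that you flag comes in). You correctly identify the key steps, the smoothness inputs from Lemma~\ref{branches_resolution}(3), and the main computational subtlety, so the plan is sound and matches the paper's argument.
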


\begin{proof}
Note that such a fan $\Sigma$ exists for any $\rho_{i,j}$. Fig.~\ref{figure_hdiv} shows a section of the cone spanned by the outer rays of $\Sigma$. The four gray cones are the cones containing $\rho_{i,j}$, we will prove that their projection along $\rho_{i,j}$ onto a plane give the fan of $F_{(\gamma_i)_{j+1}}$.

\begin{figure}[h]
\vspace{1.75cm}
\vbox to 0ex{\vss\centerline{\includegraphics[width=0.5\textwidth]{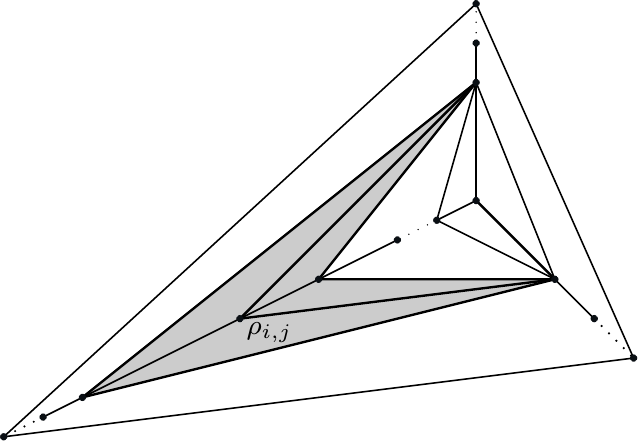}}\vss}
\vspace{1.75cm}
\caption{\small Cones containing $\rho_{i,j}$ in a fan where $D_{i,j}$ is a Hirzebruch surface}
\label{figure_hdiv}
\end{figure}

We give the proof in the case where $\rho_{i,j}$ is from the third branch, i.e. $i=3$ and $\rho_{3,j} = (c,-d,-d)$ for some $c,d \in \N$. The remaining cases can be reduced to this one by a lattice automorphism. Let $\rho_{3,j-1} = (a,-b,-b)$ and $\rho_{3,j+j} = (e,-f,-f)$, where $a$, $b$, $e$, $f$ are positive integers. All cones of $\Sigma$ containing $\rho_{3,j}$ are spanned by one of these vectors and one of $\rho_{1,1} = (0,1,0)$, $\rho_{2,1} = (0,0,1)$.

Let $\eta_1 = \rho_{2,1}-\rho_{3,j} = (-c,d,d+1)$ and $\eta_2 = \rho_{3,j-1}-\rho_{3,j} = (a-c,d-b,d-b)$. By~\cite[Proposition~3.2.7]{ToricBook} it is sufficient to prove that projections along $\rho_{3,j}$ onto the plane spanned by $\eta_1$ and $\eta_2$ of these two vectors and $\zeta_1 = \rho_{1,1}-\rho_{3,j} = (-c,d+1,d)$, $\zeta_2 = \rho_{3,j+1}-\rho_{3,j} = (e-c,d-f,d-f)$ are the rays of the fan of $F_{(\gamma_i)_{j+1}}$.

Recall that by Lemma~\ref{branches_resolution}~(3) cones $\sigma(\rho_{3,j-1}, \rho_{3,j})$ and $\sigma(\rho_{3,j}, \rho_{3,j})$ are smooth, so we have $-ad+bc = -1 = -cf+de$.
It follows that
$$\zeta_2 + \eta_2 = (e+a-2c,2d-b-f,2d-b-f) = \frac{b+f-2d}{d}(c,-d,-d),$$
that is $\zeta_2$ is projected to $-\eta_2$. We now show that $\zeta_1$ is projected to $-\eta_1+r\eta_2$ and that $r = (\gamma_3)_j$. Because $\zeta_1 + \eta_1-r\eta_2 = -2\rho_{3,j} + (0,1,1) - r(a-c,d-b,d-b)$,
it is enough to find $r$ such that $(0,1,1) - r(a-c,d-b,d-b) = k\rho_{3,j}$ for some $k\in \R$. But then
$$\frac{-r(a-c)}{c} = k = \frac{1-r(d-b)}{-d},$$
hence $c = r(ad-bc) = r$.
By Notation~\ref{notation_def_K} the first coordinate of $\rho_{3,j}$ is $(\gamma_3)_{j+1}$, which finishes the proof.
\end{proof}

\begin{prop}\label{lemma_min_resolution}
$Y$ is the minimal resolution of $\C^2/G$. Moreover, $\Cl(Y)$ is generated by restrictions to $Y$ of divisors in $X_{\Sigma}$ which are invariant under the action of the big torus of this variety. The intersection numbers of these divisors and the exceptional curves are the entries of the extended intersection matrix $U$.
\end{prop}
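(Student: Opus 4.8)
The plan is to compute directly, on the smooth surface $Y$, all intersection numbers of the curves $C_0$ and $C_{i,j}$, to check that they reproduce $U$, and then to read off both minimality and the description of $\Cl(Y)$. Two preliminary remarks set things up. First, by Lemma~\ref{S_point_types} the set $W$, and hence $Y=W/T$, does not depend on the choice of $\Sigma$; the intersection numbers on $Y$ are therefore intrinsic, and for each ray I may freely use whichever admissible fan is most convenient. Second, the defining trinomial of $S$ is $T$-homogeneous, and by the computation in the proof of Lemma~\ref{lemma_S_is_invariant} its weight is $(1,0,\ldots,0)\in P\cong\Cl(X_\Sigma)$; thus $Y$ is a divisor on $X_\Sigma$ of this class. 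Since $C_a=D_a|_Y$ as a divisor on $Y$, every intersection number on $Y$ becomes a triple intersection of torus-invariant divisors on the toric threefold, $C_a\cdot_Y C_b=(D_a\cdot D_b\cdot Y)_{X_\Sigma}$.

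The off-diagonal entries are governed purely by incidence of rays: $C_a\cdot_Y C_b$ can be nonzero only when $\rho_a,\rho_b$ span a $2$-cone of $\Sigma$, and by Lemma~\ref{lemma_fan_description}(1) these are exactly the pairs of adjacent rays on a common branch, together with $\rho_0$ and a first ray of a branch. On each such torus-invariant $\P^1=D_a\cap D_b$ the number equals $\deg(\O_{X_\Sigma}(Y)|_{\P^1})$, which I would evaluate to $1$; this reproduces all the $1$'s of $U$ (including, in the three extended columns, the single $1$ joining $C_{i,n_i}$ to the outer curve $C_{i,n_i+1}$) and all the $0$'s. For the central curve I would use that $D_0\cong\P^2$ via the three central smooth cones (Lemma~\ref{lemma_fan_description}): restricting $\O_{X_\Sigma}(Y)$ to this $\P^2$ realizes $C_0$ there, and computing $C_0^2=\deg(\O_{X_\Sigma}(D_0)|_{C_0})$ on $\P^2$ yields $-d$.

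For an inner ray $\rho_{i,j}$ I would choose, by Lemma~\ref{lemma_hirzebruch_div}, a fan in which $D_{i,j}\cong F_{(\gamma_i)_{j+1}}$, so that $C_{i,j}=Y|_{D_{i,j}}$ is a curve on this Hirzebruch surface and $C_{i,j}^2=\deg(\O_{X_\Sigma}(D_{i,j})|_{C_{i,j}})$ is an intersection number on $F_{(\gamma_i)_{j+1}}$; using the recursion defining $\gamma_i$ (equivalently the wall relation $\rho_{i,j-1}+\rho_{i,j+1}=a_{i,j}\rho_{i,j}$ inside the branch plane, which holds by Lemma~\ref{branches_resolution}) this equals $-a_{i,j}$. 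Combining the three computations, the matrix of intersection numbers of $C_0,C_{i,j}$ against the exceptional curves is exactly $U$. Since every diagonal entry is $\le -2$ (one has $d\ge 2$, and the $a_{i,j}\ge 2$ are Hirzebruch--Jung entries), the exceptional fibre of $Y\to\C^2/G$ contains no $(-1)$-curve, so the resolution $Y$ of Corollary~\ref{corollary_quotient_resolution} is the minimal one.

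For the last assertion, knowing that $Y$ is the minimal resolution lets me apply Proposition~\ref{prop_pic_is_lattice} to $Y$: $\Cl(Y)=\Pic(Y)\cong\Hom(H_2(Y,\Z),\Z)$, and pairing with the $n$ exceptional curves gives an isomorphism $\Cl(Y)\xrightarrow{\sim}\Z^n$. Under it, the classes of the restrictions $C_0,C_{i,j}$ of the big-torus-invariant divisors map precisely to the columns of $U$, which generate $\Z^n$ by Lemma~\ref{spanning_lemma}; hence these restrictions generate $\Cl(Y)$. I expect the genuine obstacle to be the diagonal self-intersections: tracking the class of the non-toric curve $C_{i,j}=Y|_{D_{i,j}}$ on $F_{(\gamma_i)_{j+1}}$ (and of $C_0$ on $\P^2$) and matching the output of the recursion to the continued-fraction entries $a_{i,j}$. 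The off-diagonal entries, by contrast, should follow immediately from cone incidence.
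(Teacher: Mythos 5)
Your proposal is correct and its architecture coincides with the paper's: use the independence of $Y$ on the choice of $\Sigma$ (Lemma~\ref{S_point_types}) to pick, for each ray, a fan in which $D_{i,j}$ is the Hirzebruch surface of Lemma~\ref{lemma_hirzebruch_div}, push the intersection numbers on $Y$ into $X_{\Sigma}$ by the projection formula, and match the result against $U$. The difference is in how the toric numbers are extracted. The paper works in local coordinates: it checks transversality of $Y$ with the invariant strata, identifies $C_{i,j}$ as a fibre of $D_{i,j}\to\P^1$, replaces it by a numerically equivalent torus-invariant fibre $C'_{i,j}$, and evaluates $D_{i,j}\cdot C'_{i,j}$ via the Cartier data of \cite[Prop.~6.3.8]{ToricBook}. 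You instead pin down the class $[Y]=\sum_j(\alpha_i)_j[D_{i,j}]$ from the $T$-weight of the trinomial and compute triple intersections $D_a\cdot D_b\cdot Y$; this makes the off-diagonal entries immediate from cone incidence and replaces the numerical-equivalence step by the wall relations. One step you assert rather than derive is that $C_a=D_a|_Y$ as divisors, i.e.\ that $Y$ meets $D_a$ generically transversally (the paper checks this in local coordinates); without it the identity $C_a\cdot_Y C_b=D_a\cdot D_b\cdot Y$ could acquire multiplicities. In your framework this comes for free and is worth spelling out: $\O_{X_\Sigma}(Y)|_{D_0}\cong\O_{\P^2}(1)$ and $\O_{X_\Sigma}(Y)|_{D_{i,j}}$ is the primitive fibre class of $F_{(\gamma_i)_{j+1}}$, so the scheme-theoretic intersections are automatically reduced and irreducible. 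Your explicit no-$(-1)$-curve argument for minimality, and the route to the class-group statement via Proposition~\ref{prop_pic_is_lattice} and Lemma~\ref{spanning_lemma}, are equivalent to (and slightly cleaner than) the paper's appeal to the shape of $U$ and its explicit integral solution of $Ux=e_i$.
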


\begin{proof}
By Lemma~\ref{S_point_types}, $W$ and hence also $Y$ does not depend on the choice of~$\Sigma$. Moreover, $\Cl(X_{\Sigma})$ does not depend on the choice of $\Sigma$, since all considered fans have the same set of rays. Thus we can investigate each exceptional curve $C_{i,j}$ in a suitably chosen fan, in which $D_{i,j}$ is isomorphic to the Hirzebruch surface $F_{(\gamma_i)_{j+1}}$, as in Lemma~\ref{lemma_hirzebruch_div}.

One can compute local equation of $Y$ on affine pieces of $X_{\Sigma}$ using the toric localization (see e.g.~\cite[Prop.~5.2.10]{ToricBook}). In local coordinates it is easy to check that $C_{i,j}\cdot C_{i,j+1} = 1$ and $C_0\cdot C_{i,1} = 1$ in $Y$ for all admissible $i$, $j$: we just obtain that they intersect transversally in a point. We skip the details and move to computing $C_{i,j}\cdot C_{i,j}$ in $Y$.

Look at $D_{i,j} \simeq F_{(\gamma_i)_{j+1}}$ as at a $\P^1$-fibration over $\P^1$, the structure of which is determined by cones in~$\Sigma$ containing $\rho_{i,j}$. Passing to local coordinates again, we check that $C_{i,j}$ is a fibre of this fibration. However, it is not one of the fibres which are torus invariant curves in $X_{\Sigma}$ and correspond to 2-dimensional faces joining $\rho_{i,j}$ with the first rays on other branches. We see in local coordinates that $Y$ intersects $D_{i,j}$ transversally in $C_{i,j}$.

Let $\iota$ and $\kappa$ denote the embedding of $Y$ and $D_{i,j}$ in $X_{\Sigma}$ respectively. By the projection formula
$$\iota_*(\iota^*\O_{X_{\Sigma}}(D_{i,j})\cdot C_{i,j}) = \O_{X_{\Sigma}}(D_{i,j})\cdot \iota_*C_{i,j}.$$
Since $\iota^*\O_{X_{\Sigma}}(D_{i,j})$ is just $D_{i,j} \cap Y = C_{i,j}$, the left hand side is just the self-intersection number of $C_{i,j}$ in $Y$. And the right hand side is $D_{i,j}\cdot C_{i,j}$ in $X_{\Sigma}$.

Let $C'_{i,j}$ be one of the fibres in $D_{i,j}\simeq F_{(\gamma_i)_{j+1}}$ which is a torus invariant curve in $X_{\Sigma}$. Assume that it corresponds to the face $\sigma(\rho_{i,j}, \rho_{k,1})$. Then, because $C_{i,j}$ and $C_{i,j}'$ are numerically equivalent in $D_{i,j}$,
\begin{multline*}
\O_{X_{\Sigma}}(D_{i,j})\cdot \iota_*C_{i,j} = \O_{X_{\Sigma}}(D_{i,j})\cdot \kappa_*C_{i,j} = \kappa_*(\kappa^*\O_{X_{\Sigma}}(D_{i,j})\cdot C_{i,j}) =\\= \kappa_*(\kappa^*\O_{X_{\Sigma}}(D_{i,j})\cdot C_{i,j}') = \O_{X_{\Sigma}}(D_{i,j})\cdot \kappa_*C_{i,j}'.
\end{multline*}
Summing up, instead of the self-intersection number of $C_{i,j}$ in $Y$ we compute $D_{i,j}\cdot C'_{i,j}$ in $X_{\Sigma}$, which can be done in the toric setting.

We use the formula for toric intersection product from~\cite[Prop.~6.3.8]{ToricBook}. Because $\Sigma$ is simplicial, $X_{\Sigma}$ is $\Q$-factorial, hence some multiple of $D_{i,j}$ is Cartier. Then $D_{i,j}$ can be described by a set $\{m_{\sigma} \colon \sigma \in \Sigma_{max}\}$ with $m_{\sigma} \in \Q\cdot M$. Let $m_1$ and $m_2$ be the elements corresponding to $\sigma(\rho_{k,1}, \rho_{i,j-1}, \rho_{i,j})$ and $\sigma(\rho_{k,1}, \rho_{i,j}, \rho_{i,j+1})$ respectively. Then they satisfy
$\langle m_1, \rho_{i,j}\rangle = \langle m_2, \rho_{i,j}\rangle = -1$ and $\langle m_1, \rho_{i,j-1}\rangle = \langle m_2, \rho_{i,j+1}\rangle = \langle m_1, \rho_{k,1}\rangle = \langle m_2, \rho_{k,1}\rangle = 0$.

By~\cite[Prop.~6.3.8]{ToricBook} we have $D_{i,j}\cdot C'_{i,j} = \langle m_1-m_2, \rho_{i,j+1}\rangle$. We show the computations in the case where $i=3$ and $k=1$, other cases can be reduced to this one by applying a lattice automorphism. For $1 \leq p \leq n_3+1$ we have $\rho_{1,1} = (0,a,0)$, $\rho_{3,p} = (b_p,c_p,c_p)$.

Moreover, by Lemma~\ref{branches_resolution}~(3) adjacent rays on each branch form a basis of the restriction of $N$ to the subspace spanned by them, so $b_pc_{p-1}-b_{p-1}c_p = 1$ (the arrangement is such that this determinant is positive). Hence $m_1 = (c_{j-1}, 0, -b_{j-1})$, $m_2=(-c_{j+1},0,b_{j+1})$
and thus $$D_{3,j}\cdot C'_{3,j} = \langle m_1-m_2, \rho_{i,j+1}\rangle = \langle m_1, \rho_{i,j+1}\rangle = c_{j-1}b_{j+1} - b_{j-1}c_{j+1}.$$

Since $b_p$, $c_p$ are entries of vectors $\ovl{\gamma_3}$ and $-\alpha_3$ in the kernel of $A_3''$ (see Lemma~\ref{constr_v1} and its proof), they satisfy recursive relations
$b_{p+1} = -(a_{3,p}b_p+b_{p-1})$ $c_{p+1} = -(a_{3,p}c_p+c_{p-1})$ with $b_0 = 1$, $b_1 = d$, $c_0 = 0$, $c_1 = 1$. Hence we check that
\begin{multline*}
c_{j-1}b_{j+1} - b_{j-1}c_{j+1} = -c_{j-1}(a_{3,j}b_j+b_{j-1}) + b_{j-1}(a_{3,j}c_j+c_{j-1}) =\\= -a_{3,j}(c_{j-1}b_j -b_{j-1}c_j) = -a_{3,j},
\end{multline*}
and, summarizing,
$$C_{i,j}\cdot_S C_{i,j} = D_{3,j}\cdot_{X_{\Sigma}}C'_{3,j} = -a_{i,j}.$$
In a very similar way we compute $C_0\cdot C_0$, which is equal to the intersection number of $D_0$ with the curve in $X_{\Sigma}$ corresponding to one of the cones $\sigma(\rho_0, \rho_{i,1})$. Thus we obtain $C_0 \cdot C_0 = -d$.

In order to compute the intersection number of $C_{i,n_i+1}$, which is not exceptional, with $C_{i,n_i}$ we consider the fan where $\rho_{i,n_i}$ and $\rho_{i,n_i+1}$ form (smooth) cones with the first rays on two other branches. Passing to local coordinates we get the result $C_{i,n_1+1}\cdot C_{i,n_i} = 1$.

Therefore the intersection numbers of the exceptional curves $C_0$ and $C_{i,j}$ for $j \leq n_i$ with the divisors $C_0$, $C_{i,j}$ for $j \leq n_i+1$ in $Y$ are just the entries of the extended intersection matrix $U$. In particular, $Y$ is the minimal resolution of $\C^2/G$.

To prove that the restrictions of the torus invariant divisors in $X_{\Sigma}$ to $Y$ generate $\Cl(Y)$ it suffices to show that the subgroup generated by these divisors contains duals of the exceptional curves. Since their intersection numbers are the entries of $U$, this is equivalent to the fact that the system of equations given by the rows of $U$ with the constant terms such that one is 1 and the remaining are 0 has an integral solution. And such solutions can be easily constructed using the methods as in the proof of Lemma~\ref{constr_v1}.
\end{proof}


\section{The spectrum of the Cox ring}\label{spectrum_cox_ring}

Here we finish the proof of Theorem~\ref{main_thm}, which states that the hypersurface $S\subset \C^{n+3}$ introduced in Construction~\ref{equation} is the spectrum of the Cox ring of the minimal resolution $X$ of a surface quotient singularity $\C^2/G$. Our argument is based on Theorem~6.4.3 and Corollary~6.4.4 in \cite{CoxRings}, which provide a characterization of the Cox rings via Geometric Invariant Theory.

As before, we investigate $S$, its $T$-invariant open subset $$W = S \setminus Z(\Sigma)$$ (independent of the choice of~$\Sigma$) and the geometric quotient $Y = W/T$. We need to check a few properties of these spaces to see whether the assumptions of~\cite[Thm.~6.4.4]{CoxRings} are fulfilled. It is worth noting that the quotients considered here are a special case of a much more general theory of good quotients of algebraic varieties by reductive group actions, developed by Bia\l{}ynicki-Birula and \'Swi\k{e}cicka in a series of papers including~\cite{BBSw2}, which can be useful for a possible generalization of our results.

The first property is the strong stability of the action of $T$ on $W$ -- for the definition see~\cite[Def.~6.4.1]{CoxRings}. Then, in Proposition~\ref{prop_T-factorial}, we prove the $T$-factoriality of $S$.

\begin{prop}\label{corollary_strongly_stable}
The action of $T$ on $W$ is strongly stable.
\end{prop}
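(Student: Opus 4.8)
The plan is to verify the three conditions in the definition of strong stability (\cite[Def.~6.4.1]{CoxRings}) for the action of $T$ on $W = S\setminus Z(\Sigma)$. Recall that strong stability requires: (i) $W$ is invariant and the complement $S\setminus W$ has codimension at least $2$ in $S$; (ii) the action of $T$ on $W$ is free; and (iii) every $T$-orbit in $W$ is closed in $W$ (so that the quotient is geometric and the orbits are the fibres). The freeness (ii) is already established in Lemma~\ref{free_action}, so the work concentrates on the codimension bound and the closedness of orbits.

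First I would address the codimension statement. Using the explicit description of $W$ from Lemma~\ref{S_point_types}, the removed set $S\setminus W = S\cap Z(\Sigma)$ consists of points of $S$ having at least two zero coordinates on rays that are \emph{not} a pair of adjacent rays on a single branch, together with all points having at least three zero coordinates. I would argue that each such locus is cut out inside the hypersurface $S$ (which has dimension $n+2$) by at least two further independent coordinate vanishing conditions that are not already forced by the equation of $S$, so these loci have dimension at most $n$, giving codimension at least $2$ in $S$. The combinatorics here mirrors the analysis already carried out in the proof of Lemma~\ref{S_point_types}: whenever a point has two zeros on non-adjacent rays (possibly on different branches) the trinomial equation~(\ref{equation_equation_S}) forces an additional vanishing, pushing the locus into higher codimension.

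Next I would prove that all $T$-orbits in $W$ are closed in $W$. The clean way to see this is to use that $Y = W/T$ is a \emph{geometric} quotient (Remark~\ref{remark_good_quotient}): in a geometric quotient the orbits are precisely the fibres of the quotient map, hence closed in $W$. Indeed $W = S\setminus Z(\Sigma)$ is a $T$-invariant closed subset of $\C^{n+3}\setminus Z(\Sigma)$, and the latter maps to $X_\Sigma$ by a geometric quotient since $\Sigma$ is simplicial; restricting to the closed invariant subset $W$ keeps the quotient geometric. Combined with the freeness from Lemma~\ref{free_action}, this gives that each orbit is closed and the stabilizers are trivial, which are exactly the orbit-theoretic conditions in the definition.

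The main obstacle I anticipate is the codimension-$2$ estimate, rather than the orbit conditions, which follow almost formally from the geometric quotient structure. The delicate point is that $S$ is a hypersurface, so naively counting coordinate conditions can overcount: the defining trinomial~(\ref{equation_equation_S}) already links the three branches, so on certain coordinate strata one vanishing condition is automatically implied by the others and does not cut down the dimension. I would therefore proceed stratum by stratum, distinguishing whether the vanishing coordinates lie on one branch or on several, and in each case check carefully (using that each $(\alpha_i)_1=1$, so $y_{i,1}$ appears linearly, cf.~Remark~\ref{equation_description}) exactly how many \emph{independent} conditions remain after imposing the equation of $S$. The case most in need of care is a pair of zeros on two different branches, where the equation forces a third monomial to vanish and hence a third coordinate to be zero, landing the point in a codimension-$\geq 2$ stratum of $S$, consistent with its exclusion from $W$.
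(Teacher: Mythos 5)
Your verification of the two substantive conditions---freeness of the $T$-action (Lemma~\ref{free_action}) and closedness of orbits via the geometric quotient structure of Remark~\ref{remark_good_quotient}---is exactly the paper's argument, so the core of the proof is fine. However, you have misplaced the codimension condition. Strong stability of the action of $T$ on $W$ in the sense of \cite[Def.~6.4.1]{CoxRings} asks for an open invariant subset $W' \subseteq W$ whose complement has codimension at least two \emph{in $W$}; the paper simply takes $W' = W$, so this condition is vacuous and no stratification argument is needed for this proposition. The requirement that $\codim_S(S\setminus W) \geq 2$ is a separate hypothesis of the GIT characterization \cite[Cor.~6.4.4]{CoxRings}, and it is invoked later, in the proofs of Proposition~\ref{prop_T-factorial} and Theorem~\ref{main_thm}, not here. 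Consequently the ``main obstacle'' you anticipate is not an obstacle to this statement at all---though your stratum-by-stratum analysis of $S\cap Z(\Sigma)$ (two zeros on non-adjacent rays forcing a third vanishing through the trinomial equation, the linearity of $y_{i,1}$, etc.) is sound, mirrors Lemma~\ref{S_point_types}, and is essentially what one would write down to justify the codimension claim where the paper actually uses it.
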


\begin{proof}
We take $W' := W$. Then, obviously, $W'$ is $T$-invariant and the codimension of its complement in $W$ is $\geq 2$. Also, by Remark~\ref{remark_good_quotient} all the orbits of $T$ in $W$ are closed. Finally, in Lemma~\ref{free_action} it is proven that $T$ acts freely on $W$, which finishes the proof.
\end{proof}

\begin{prop}\label{prop_T-factorial}
$S$ is $T$-factorial, i.e. every $T$-invariant Weil divisor on $S$ is principal.
\end{prop}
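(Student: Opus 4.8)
The plan is to reduce the statement to a surjectivity assertion about a single homomorphism on the smooth quotient $Y=W/T$, using that $T$ acts freely there. First I would pass from $S$ to its open subset $W=S\setminus Z(\Sigma)$. By Lemma~\ref{S_point_types} the complement $S\setminus W$ consists of points with at least two zero coordinates in non-adjacent positions, or at least three zero coordinates, hence has codimension $\ge 2$ in $S$; as the singular locus of $S$ is contained in this set (see the proof of Proposition~\ref{prop_quotient_smoothness}) and $S$ is a hypersurface, $S$ is normal. Restriction therefore gives a bijection between $T$-invariant Weil divisors on $S$ and on $W$, and since the two function fields coincide, a $T$-invariant divisor on $S$ is principal exactly when its restriction to $W$ is the divisor of a $T$-semi-invariant rational function. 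So it suffices to prove $T$-factoriality of $W$.

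Next I would exploit that, by Lemma~\ref{free_action} and Proposition~\ref{prop_quotient_smoothness}, the map $\pi\colon W\to Y$ is a geometric quotient for a free $T$-action onto a smooth surface, hence a principal $T$-bundle. For such a bundle every $T$-invariant prime divisor on $W$ equals the full preimage $\pi^{-1}(E)=\pi^{*}E$ of a unique prime divisor $E$ on $Y$, so $\pi^{*}$ identifies $T$-invariant Weil divisors on $W$ with $\operatorname{WDiv}(Y)$. Moreover a $T$-semi-invariant rational function of weight $\chi\in P$ is precisely a rational section $s$ of the associated line bundle $L_\chi$ on $Y$, with $\operatorname{div}_W(g)=\pi^{*}\operatorname{div}_Y(s)$. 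Letting $\chi$ range over $P$, the $T$-principal divisors correspond exactly to divisors on $Y$ whose class lies in the image of the characteristic homomorphism $\phi\colon P\to\Cl(Y)$, $\chi\mapsto c_1(L_\chi)$. Thus the group of $T$-invariant Weil divisors modulo $T$-principal ones is isomorphic to $\coker\phi=\Cl(Y)/\phi(P)$, and the claim reduces to showing that $\phi$ is surjective.

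Finally I would verify surjectivity from the explicit descriptions of both sides. Each coordinate $z_k$ of $\C^{n+3}$ is a $T$-semi-invariant whose weight is the $k$-th column $u_k$ of $U$ (Definition~\ref{equation_Pic_torus_action}), and its zero divisor on $W$ descends to the restriction $C_k=D_k\cap Y$ of the corresponding torus-invariant divisor on $X_{\Sigma}$, so $\phi(u_k)=[C_k]$. By Lemma~\ref{spanning_lemma} the columns $u_k$ generate $P\simeq\Z^n$, while by Proposition~\ref{lemma_min_resolution} the classes $[C_k]$ generate $\Cl(Y)$ (recall $Y$ is the minimal resolution $X$, so $\Cl(Y)=\Pic(X)=P$ by Proposition~\ref{prop_pic_is_lattice}). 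As $\phi$ is a homomorphism carrying generators to generators, it is onto, whence $\coker\phi=0$ and $S$ is $T$-factorial. The routine inputs here (normality, the codimension bound, and the two generation statements) are already available, so the delicate point, and the step I expect to require the most care, is the middle one: rigorously matching $T$-invariant divisors and $T$-semi-invariant functions on $W$ with divisors and line-bundle classes on $Y$. This rests on $\pi$ being a principal $T$-bundle and on the exact sequence $1\to\C(Y)^{*}\to\C(W)^{(T)}\to P\to 0$, which guarantees that semi-invariants of every weight exist and that $\operatorname{div}$ commutes with $\pi^{*}$; at this point one either invokes the general machinery of~\cite{CoxRings} or argues the bundle statement by hand.
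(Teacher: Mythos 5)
Your overall reduction is sound, and it is in fact the same reduction the paper makes, only phrased homologically: passing to $W$ (using $\codim_S(S\setminus W)\geq 2$), identifying $T$-invariant divisors on $W$ with divisors on $Y$ via the free quotient, and recasting $T$-factoriality as surjectivity of the weight-to-class map $\phi\colon P\to\Cl(Y)$ is exactly the paper's ``it is sufficient to show that the pull-backs of generators of $\Pic(Y)$ are principal''. The two generation statements you invoke are indeed available (Lemma~\ref{spanning_lemma} and Proposition~\ref{lemma_min_resolution}), and the torsor bookkeeping you single out as the delicate point is genuinely routine here (a free torus action with geometric quotient is a Zariski-locally trivial torsor). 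The gap is elsewhere: the clause ``its zero divisor on $W$ descends to the restriction $C_k=D_k\cap Y$, so $\phi(u_k)=[C_k]$'' is the entire content of the paper's proof, and as stated it is unjustified. A priori you only know that $\operatorname{div}_W(z_k)$ is supported on the $T$-invariant prime divisor $\{z_k=0\}\cap W$, i.e.\ that $\operatorname{div}_W(z_k)=m_k\,\pi^{*}C_k$, where $m_k\geq 1$ is the valuation of $z_k$ at the generic point of $\{z_k=0\}\cap S$; this order of vanishing could exceed $1$ if, say, the hyperplane $\{z_k=0\}$ were tangent to the hypersurface $S$ along that divisor. Then $\phi(u_k)=m_k[C_k]$, and surjectivity of $\phi$ no longer follows from the generation statements.

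So you must prove $m_k=1$ for every coordinate, and this is precisely what the paper's proof does: it localizes $\C[S]$ at the generic point of $\{x_1=0\}\cap S$ and checks that $x_1$ is not in the square of the maximal ideal, the point being that the ideal one localizes at contains the sum of the two monomials of equation~(\ref{equation_equation_S}) not involving $x_1$, which is irreducible and hence contributes nothing divisible by $x_1$. An equivalent way to close the gap inside your framework: at a generic point $p$ of $\{z_k=0\}\cap S$ all other coordinates are nonzero (Lemma~\ref{S_point_types}); choosing a branch index $j$ other than the one carrying $z_k$, the partial derivative of the trinomial with respect to $y_{j,1}$ (which occurs with exponent $1$) is a monomial in the nonvanishing coordinates, hence nonzero at $p$, and since it is taken in a direction different from $z_k$, the differential of the defining equation is not proportional to $dz_k$; therefore $z_k|_S$ has a simple zero along the divisor. (For $z_k=y_0$ this is immediate, as $y_0$ does not occur in the equation.) With $m_k=1$ supplied, the rest of your argument goes through and recovers the proposition.
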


\begin{proof}
First notice that every $T$-invariant Weil divisor in $S$ is a pull-back of a divisor in $Y = W/T$. This is because of dimension reasons: $T$ is an $n$-dimensional torus acting freely on an $(n+2)$-dimensional variety $W$, and $S\setminus W$ is of $\codim \geq 2$ in $S$, so an invariant divisor cannot be mapped to a subset of codimension bigger than one. Hence it is sufficient to show that the pull-backs of generators of $\Pic(Y)$ are principal. Their equations are $\{y_{i,j} = 0\}$ or $\{x_i = 0\}$, where the coordinates on $\C^{n+3} \supset S$ are denoted as in Construction~\ref{equation}. Thus the question is whether Cartier divisors defined by these functions are not multiples of Weil divisors defined as intersections of hyperplanes $\{y_{i,j} = 0\}$ and $\{x_i = 0\}$ with $S$. Thus we have to check whether valuations corresponding to local rings of $S$ are 1 on $y_{i,j}$ and $x_i$.

The argument is the same for all functions $x_i$ and $y_{i,j}$, so we may choose $x_1$ and check that it is not in the square of the maximal ideal of the localization of $\C[S]$ in a generic point of $\{x_1=0\} \cap S$. As $S$ is given by the equation $$\sum_{i=1,2,3} y_{i,1}^{(\alpha_i)_1}\cdots y_{i,n_i}^{(\alpha_i)_{n_i}}\cdot x_i^{(\alpha_i)_{n_i+1}},$$ the ideal with respect to which we localize contains
$$y_{2,1}^{(\alpha_2)_1}\cdots y_{2,n_2}^{(\alpha_2)_{n_2}} x_2^{(\alpha_2)_{n_2+1}} + y_{3,1}^{(\alpha_3)_1}\cdots y_{3,n_3}^{(\alpha_3)_{n_3}} x_3^{(\alpha_3)_{n_3+1}}.$$ However, it is irreducible, so we cannot obtain from it any elements of the ideal dividing $x_1$, hence $x_1$ is a generator of the maximal ideal of the localization.
\end{proof}

We are ready to complete the proof of the first of our main results.

\begin{thm}\label{main_thm}
Let $X$ be the minimal resolution of a surface quotient singularity $\C^2/G$. If $S$ is as defined in Construction~\ref{equation}, then $S \simeq \scx$.
\end{thm}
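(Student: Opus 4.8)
The plan is to verify that the hypothesis of the GIT characterization of Cox rings, \cite[Thm.~6.4.3]{CoxRings} (equivalently Cor.~6.4.4), are satisfied for the pair $(S, W, T)$. That theorem asserts that if a variety $S$ with a torus action admits an open subset $W$ with good properties whose quotient is the target variety $X$, and if $S$ is $T$-factorial with the action strongly stable, then $S$ is the spectrum of the Cox ring of the quotient $X = W/T$. Almost all the technical ingredients have already been assembled in the preceding sections, so the proof is chiefly a matter of checking the precise list of hypotheses and invoking the cited result.

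First I would recall that $S$ is irreducible (Construction~\ref{equation}), normal, and that $W = S \setminus Z(\Sigma)$ is its $T$-invariant open subset whose complement has codimension $\geq 2$ (this follows from the description of $W$ in Lemma~\ref{S_point_types}, since removing $Z(\Sigma)$ only deletes loci of codimension at least two in $S$). Next I would assemble the three key properties established above: the quotient $Y = W/T$ is the minimal resolution $X$ of $\C^2/G$ (Proposition~\ref{lemma_min_resolution} together with Corollary~\ref{corollary_quotient_resolution}); the action of $T$ on $W$ is strongly stable (Proposition~\ref{corollary_strongly_stable}); and $S$ is $T$-factorial (Proposition~\ref{prop_T-factorial}). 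I would also note that $\Pic(X) = \Cl(X)$ is a free finitely generated group (Proposition~\ref{prop_pic_is_lattice}), so the grading group is as required, and the grading of $\C[S]$ by characters of $T$ matches the $\Cl(X)$-grading via the identification $P \simeq \Pic(X)$ coming from the map $U$ in~(\ref{equation_monomial_lattices_map}).

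With these in hand, the proof reduces to feeding them into \cite[Thm.~6.4.3]{CoxRings}: the strong stability gives the existence of the good (indeed geometric) quotient $W \to Y$ with the correct codimension and freeness conditions, while $T$-factoriality ensures that every $T$-invariant divisor on $S$ is principal, which is exactly the condition guaranteeing that the pushed-down sheaves reconstruct the divisorial sheaves $\O_X(D)$ on $X$. Matching the weight lattice $P$ with $\Cl(X)$ via $U$ then identifies $\C[S] = \bigoplus_{[D]} \Gamma(X, \O_X(D)) = \Cox(X)$ as a graded ring, whence $S \simeq \scx$.

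The main obstacle I expect is not any single computation but ensuring that the \emph{grading} produced by the $T$-action coincides, under the isomorphism $Y \simeq X$, with the intrinsic $\Cl(X)$-grading of the Cox ring — i.e.\ that the characters by which $T$ acts on the coordinates $y_0, y_{i,j}, x_i$ correspond under $U$ to the classes of the torus-invariant divisors $C_0, C_{i,j}$ on $X$. This is precisely what the computation of intersection numbers in Proposition~\ref{lemma_min_resolution} secures: the columns of $U$ are the divisor classes, and the $T$-weights are these same columns (Definition~\ref{equation_Pic_torus_action}). Verifying that the GIT machinery of \cite{CoxRings} respects this identification, rather than introducing a twist or a finite ambiguity, is the delicate point; everything else is a direct citation of the propositions already proven.
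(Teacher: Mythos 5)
Your overall strategy is exactly the paper's: assemble normality, $T$-factoriality (Proposition~\ref{prop_T-factorial}), strong stability (Proposition~\ref{corollary_strongly_stable}), the codimension-$2$ condition on $S\setminus W$, and the identification of $W/T$ with the minimal resolution, then invoke the GIT characterization \cite[Thm.~6.4.3, Cor.~6.4.4]{CoxRings}. The worry you flag at the end about the grading is not where the difficulty lies --- the characterization theorem already produces the graded isomorphism once the character lattice of $T$ is identified with $\Cl(X)$ via the quotient presentation, which is exactly what the exact sequence~(\ref{equation_monomial_lattices_map}) and Proposition~\ref{lemma_min_resolution} provide.

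However, there is a genuine gap: you never verify that $S$ carries only constant invertible global regular functions, i.e.\ $\Gamma(S,\O^*)=\C^*$. This is one of the hypotheses of \cite[Thm.~6.4.3]{CoxRings} (the total coordinate space of a variety with $\Gamma(X,\O^*)=\C^*$ and free finitely generated class group must itself have no nonconstant units), and it does not follow formally from irreducibility or normality --- an affine variety can easily have nonconstant units. The paper spends roughly half of its proof on precisely this point: it exhibits the affine subspace $V=\{y_{1,1}=y_{2,1}=y_{3,1}=0\}\subset S$, and then, using the fact that the variables $y_{1,1},y_{2,1},y_{3,1}$ occur with exponent $1$ in equation~(\ref{equation_equation_S}) (Remark~\ref{equation_description}), shows that every point of $S$ lies on an affine plane contained in $S$ that meets $V$; hence any invertible function is constant along these planes and therefore constant on all of $S$. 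Without this step (or some substitute, e.g.\ an argument via the grading showing that a unit would have to be a homogeneous monomial of trivial degree), the citation of the GIT characterization is not justified. The rest of your argument is correct and matches the paper.
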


\begin{proof}
$S$ is a hypersurface in a smooth variety and its set of singular points has codimension $\geq 2$ (if its Jacobian matrix is zero in a point, then at least three coordinates are zero), so it is a normal variety by the Serre's criterion. Moreover, every invertible function on $S$ is constant. To see this,
first observe that the affine space $V$ described by conditions $y_{1,0} = y_{2,0} = y_{3,0} = 0$ is contained in $S$ (given by equation~\ref{equation_equation_S}). Since on an affine space all invertible functions are constant, the restriction of such a function on $S$ to $V$ is constant, in particular equal to the value of this function in 0.
Take any point $$p = (v_0, u_1, v_{1,1}, \ldots, v_{1,n_1+1}, u_2, v_{2,1}, \ldots, v_{2,n_2+1}, u_3, v_{3,1}, \ldots, v_{3,n_3+1}) \in S.$$
If we show an affine space contained is $S$, passing through $p$ and intersecting $V$, we obtain that a value of any invertible rational function in $p$ is equal to its value in 0.
Remember that variables $y_{1,1}$, $y_{2,1}$, $y_{3,1}$ appear in the equation of $S$ with exponent~1 (see Remark~\ref{equation_description}). Thus the equations $x_i = u_i$, $y_0 = v_0$, $y_{i,j} = v_{i,j}$ for all $i = 1,2,3$, $j=0$ and $j > 1$ together with the equation of $S$ determine a plane in $S$: the equation of $S$ transforms to $q_1y_{1,1} + q_2y_{2,1}+q_3y_{3,1} = 0$ for suitable $q_1, q_2, q_3 \in \C$. It passes through $p$ and intersects $V$ as desired.

By Proposition~\ref{prop_T-factorial} we know that $S$ is $T$-factorial. Now $W \subset S$ is an open and $T$-invariant subset such that $\codim_S(S\setminus W) \geq 2$. The action of $T$ on $W$ admits a good quotient, as it was observed in Remark~\ref{remark_good_quotient}. Finally, by Proposition~\ref{corollary_strongly_stable} this action is strongly stable. Therefore it follows by~\cite[Thm.~6.4.4]{CoxRings} that $S$ is the spectrum of the Cox ring of~$X$.
\end{proof}


\section{Generators of the Cox ring}\label{sect_generators}

In this section we focus on investigating the relation between Cox rings of the singularity $\C^2/G$ and its minimal resolution $X$. This leads us to a description of generators of $\Cox(X)$ presented in a natural way as a subring of the coordinate ring of $\C^2/[G,G]\times T$ (see Theorem~\ref{theorem_parametrization}). We expect that the ideas sketched in this chapter work in a more general setting, and that they will form a basis for the extension of this work to higher dimensional quotient singularities, at least for some specific classes of groups, in particular 4-dimensional symplectic quotient singularities. The work presented here will be continued and developed in a forthcoming paper~\cite{cox_resolutions}.

\subsection{The Cox ring of a quotient singularity}\label{section_cox_quotient}

We start from statements concerning the structure of the invariant ring $\C^{[G,G]}$ and the Cox ring of a quotient singularity (in arbitrary dimension). Consider a linear action of $G \subset \GL(n,\C)$ on an affine space $V \simeq \C^n$ and on~$\C[V]$. Look at the induced action of $Ab(G)$ on the ring $\C[V]^{[G,G]}$ of invariants of the commutator. Note that $\C[V]^{[G,G]}$ is a $\C[V]^G$-module and that the character group of $G$ satisfies $G^{\vee} = Ab(G)^{\vee} \simeq Ab(G)$. Moreover, by~\cite[Thm~3.9.2]{Benson} we have $\Cl(V/G) \simeq Ab(G)$. We are interested in relative invariants of the action of~$G$, i.e. regular or rational functions on~$V$ which are eigenvectors of~$G$ and the action on such a function is the multiplication by values of a character $\mu$ of~$G$ (see~\cite[Sect.~1.1]{Benson} or~\cite[Sect.~1]{Stanley}). In particular, we need to consider these relative invariants which are contained in $\C[V]^{[G,G]}$.

\begin{defn}\label{definition_eigenspace}
By $\C[V]_{\mu}^{G}$ we denote the eigenspace of the action of $Ab(G)$ on $\C[V]^{[G,G]}$ corresponding to a~linear character $\mu \in G^{\vee}$, i.e. a submodule (over $\C[V]^G$) consisting of all $f \in \C[V]^{[G,G]}$ such that for any $g \in Ab(G)$ we have $$g(f) = \mu(g)f.$$
\end{defn}

In general, $\C[V]$ decomposes as a direct sum of its $\C[V]^G$-submodules of relative invariants (see e.g.~\cite[Sect.~1.1]{Stanley}). The following lemma describes restriction of this decomposition to $\C[V]^{[G,G]}$. We present the proof to get more insight into the behavior of relative invariants.

\begin{lem}\label{lemma_decomposition_inv}
The ring of invariants $\C[V]^{[G,G]}$ decomposes as a sum of eigenspaces of $Ab(G)$ associated with all characters of~$G$
$$\C[V]^{[G,G]} = \bigoplus_{\mu \in G^{\vee}} \C[V]_{\mu}^{G}.$$
Each of these eigenspaces is a $\C[V]^G$-module of rank 1, associated with a class in $\Cl(V/G) \simeq Ab(G)$.
\end{lem}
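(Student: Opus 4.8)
The plan is to prove the three assertions in turn: the direct-sum decomposition, the rank-one property of each summand, and the attachment of a divisor class.

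First I would obtain the decomposition from the representation theory of the finite abelian group $Ab(G)$. Working over $\C$, the averaging operators
$$e_{\mu} = \frac{1}{|Ab(G)|}\sum_{g \in Ab(G)} \mu(g)^{-1}\, g \qquad (\mu \in Ab(G)^{\vee})$$
form a complete orthogonal system of idempotents acting $\C$-linearly on $\C[V]^{[G,G]}$, so they split it as the direct sum of their images, which are exactly the eigenspaces $\C[V]_{\mu}^{G}$ of Definition~\ref{definition_eigenspace}. Since every character of $G$ is trivial on $[G,G]$, one has $G^{\vee} = Ab(G)^{\vee}$, so these characters index the summands as claimed. I would also record two elementary facts for later use: $\C[V]_{1}^{G} = \C[V]^{G}$ (a $[G,G]$-invariant fixed by all of $Ab(G)$ is fixed by $G$), and $\C[V]_{\mu}^{G}\cdot \C[V]_{\nu}^{G}\subseteq \C[V]_{\mu\nu}^{G}$, so that $\C[V]^{[G,G]}$ is $G^{\vee}$-graded and each summand is a $\C[V]^{G}$-module.

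For the rank I would pass to fraction fields. The extension $\C(V)/\C(V)^{G}$ is Galois with group $G$ by Artin's theorem, hence $\C(V)^{[G,G]}/\C(V)^{G}$ is Galois with group $Ab(G)$, of degree $|Ab(G)|$. The same idempotents decompose it as $\bigoplus_{\mu}\C(V)_{\mu}^{G}$, and each $\C(V)_{\mu}^{G}$ is at most one-dimensional over $\C(V)^{G}$, because the quotient of two nonzero weight-$\mu$ elements is $Ab(G)$-invariant; as the $|Ab(G)|$ summands have total dimension equal to the degree $|Ab(G)|$, each is exactly one-dimensional. Since $\C(V)^{G} = \operatorname{Frac}(\C[V]^{G})$ and $\C[V]_{\mu}^{G}\subseteq \C(V)_{\mu}^{G}$, the $\C[V]^{G}$-module $\C[V]_{\mu}^{G}$ has rank $\le 1$. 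It is nonzero: starting from any nonzero $h\in\C(V)_{\mu}^{G}$ and clearing denominators so that $h=a/b$ with $a,b\in\C[V]^{[G,G]}$, the element $a\prod_{g\ne 1} g(b) = h\cdot\prod_{g} g(b)$ is a nonzero polynomial in $\C[V]^{[G,G]}$ of weight $\mu$ (its denominator $\prod_{g} g(b)$ being $G$-invariant). Hence the rank is exactly $1$.

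Finally I would attach the class. The ring $\C[V]^{G}$ is a normal affine domain, the coordinate ring of $V/G$, and rank-one reflexive $\C[V]^{G}$-modules correspond to $\Cl(V/G)$. Fixing a nonzero relative invariant $f_{\mu}\in\C[V]_{\mu}^{G}$, multiplication by $f_{\mu}^{-1}$ identifies $\C[V]_{\mu}^{G}$ with the fractional ideal
$$I_{\mu} = \{\, r\in\C(V)^{G} : f_{\mu}\, r\in\C[V] \,\} \subseteq \C(V)^{G},$$
which is the section module $\Gamma(V/G,\O(D_{\mu}))$ of a Weil divisor $D_{\mu}$ determined by $\operatorname{div} f_{\mu}$ (well defined since $f_{\mu}$ is a relative invariant, so its divisor is $G$-invariant). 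Thus $\C[V]_{\mu}^{G}$ is reflexive and yields a class $[D_{\mu}]\in\Cl(V/G)$ independent of the choice of $f_{\mu}$. Because $f_{\mu}f_{\nu}$ is a weight-$\mu\nu$ relative invariant, $\operatorname{div}(f_{\mu}f_{\nu})=\operatorname{div} f_{\mu}+\operatorname{div} f_{\nu}$ shows $\mu\mapsto[D_{\mu}]$ is a group homomorphism $G^{\vee}\to\Cl(V/G)$; comparing with the isomorphism $\Cl(V/G)\simeq Ab(G)$ of~\cite[Thm.~3.9.2]{Benson} (already invoked in Proposition~\ref{prop_cl_for_singularity}) identifies $\C[V]_{\mu}^{G}$ with the summand associated to the class corresponding to $\mu$.

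The decomposition and the rank count are essentially formal. The step I expect to require the most care is the last one: checking that each eigenspace is genuinely reflexive (equal to a section module, not merely a rank-one torsion-free module) and that the homomorphism $G^{\vee}\to\Cl(V/G)$ is precisely Benson's identification — equivalently, that the $Ab(G)$-grading on $\C[V]^{[G,G]}$ is the $\Cl(V/G)$-grading of the Cox ring of $V/G$. I would reduce this to the explicit description of $I_{\mu}$ together with the additivity of $\operatorname{div} f_{\mu}$, so that the remaining point is the injectivity of a homomorphism between two finite groups of the same order $|Ab(G)|$.
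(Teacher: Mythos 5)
Your proof is correct, and it follows the same overall strategy as the paper (pass to the Galois extension $\C(V)^{[G,G]}/\C(V)^{G}$ with group $Ab(G)$, decompose there, then restrict to the polynomial ring via character orthogonality), but the key step is handled differently. The paper invokes the normal basis theorem to exhibit $\C(V)^{[G,G]}$ as the regular representation of $Ab(G)$ over $\C(V)^{G}$, from which each character appears exactly once; you instead observe that any weight space in a field is at most one-dimensional over the invariant subfield (the ratio of two nonzero weight-$\mu$ elements is invariant) and conclude by counting against the degree $|Ab(G)|$ of the extension. Your route is more elementary and, importantly, you supply two points the paper's proof glosses over: the nonvanishing of the \emph{polynomial} eigenspaces $\C[V]_{\mu}^{G}$ (via clearing denominators with the $G$-invariant norm $\prod_{g}g(b)$), which is what upgrades ``rank $\le 1$'' to ``rank $1$''; and a sketch of the identification of each eigenspace with a divisorial section module giving the class in $\Cl(V/G)\simeq Ab(G)$, which the paper only asserts here and effectively defers to the citation of \cite{AG_finite} in Proposition~\ref{prop_cox_singularity}. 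You are right to flag the reflexivity/compatibility-with-Benson step as the one needing the most care; as written it is a plausible and completable sketch rather than a full argument, but it goes beyond what the paper itself proves for this lemma.
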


\begin{proof}
First look at the sequence of ring inclusions and the corresponding inclusions of fields of fractions $$\C[V]^G \subset \C[V]^{[G,G]} \subset \C[V], \qquad \C(V)^G \subset \C(V)^{[G,G]} \subset \C(V).$$ Note that $\C(V)^G$ means both the field of fractions of $\C[V]^G$ and the subfield of invariants of the induced action of $G$ on $\C(V)$.

Consider $\C(V)$ as a Galois extension of $\C(V)^G$ with the Galois group $G$ (see e.g. \cite[Prop.~1.1.1]{Benson}). Then $\C(V)^G \subset \C(V)^{[G,G]}$ is also Galois with the automorphism group $G/[G,G] = Ab(G)$. By the normal basis theorem there exists $\alpha \in \C(V)^{[G,G]}$ such that $\C(V)^{[G,G]}$ is spanned over $\C(V)^G$ by the orbit $\{g(\alpha) \colon g \in Ab(G)\}$. This basis endowed with the action of $Ab(G)$ is isomorphic to $Ab(G)$ acting on itself by multiplication, which means that $\C(V)^{[G,G]}$ is the regular representation of $Ab(G)$. Hence it splits into the sum of all irreducible representations of $Ab(G)$, which are one-dimensional since $Ab(G)$ is abelian, and each of them appears once in the decomposition:
$$\C(V)^{[G,G]} = \bigoplus_{\mu \in Ab(G)^{\vee}} \C(V)_{\mu}^{G}.$$

It remains to prove that $\C[V]^{[G,G]}$ is a direct sum of $\C[V]_{\mu}^{G} = \C(V)_{\mu}^{G}\cap \C[V]^{[G,G]}$, which follows from the orthogonality of characters.
\end{proof}

The following proposition describes the Cox ring of a quotient singularity and explains that the embedding $\Cox(X) \hookrightarrow \C[a,b]^{[G,G]}\otimes \C[T]$ we are about to construct relates the Cox ring of the minimal resolution to the Cox ring of the singularity.

\begin{prop}\label{prop_cox_singularity}
For a complex vector space $V$ with an action of a finite group $G \subset \GL(V,\C)$ we have $$\Cox(V/G) \simeq \C[V]^{[G,G]}.$$
\end{prop}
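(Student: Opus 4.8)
The plan is to identify $\Cox(V/G)$ with $\C[V]^{[G,G]}$ by matching the $\Cl(V/G)$-grading of the Cox ring with the eigenspace decomposition established in Lemma~\ref{lemma_decomposition_inv}. Recall that by definition $\Cox(V/G) = \bigoplus_{[D] \in \Cl(V/G)} \Gamma(V/G, \O_{V/G}(D))$, and that by Proposition~\ref{prop_cl_for_singularity} (applied with $V = \C^2$, or its analogue in arbitrary dimension from \cite[Thm~3.9.2]{Benson}) we have $\Cl(V/G) \simeq Ab(G) \simeq G^{\vee}$. On the other side, Lemma~\ref{lemma_decomposition_inv} gives $\C[V]^{[G,G]} = \bigoplus_{\mu \in G^{\vee}} \C[V]_{\mu}^{G}$, where each summand is a rank-one $\C[V]^G$-module associated with a class in $\Cl(V/G)$. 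Both rings are therefore graded by the same group, so it suffices to produce a graded isomorphism, which I would do summand by summand.

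First I would make precise the correspondence between divisor classes and characters. The quotient map $p \colon V \ra V/G$ is finite, and away from the (codimension $\geq 2$) branch locus it is an étale $G$-cover; pulling back a Weil divisor class $[D]$ on $V/G$ and recording how $G$ acts on the fibre of the associated reflexive sheaf yields a character $\mu \in G^\vee$, and this is exactly the isomorphism $\Cl(V/G) \simeq G^\vee$ from \cite{Benson}. I would then identify the global sections $\Gamma(V/G, \O(D_\mu))$ with the $\mu$-relative invariants: a rational function $f$ on $V$ descends to a section of $\O(D_\mu)$ precisely when $f$ is $[G,G]$-invariant and transforms by $\mu$ under $Ab(G)$, which is by definition an element of $\C[V]_{\mu}^G$. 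Here one uses that $\C[V]^G$ is normal (a polynomial ring's integral closure) so that reflexive sheaves on $V/G$ are determined in codimension one, and that the branch locus can be ignored in computing sections of a reflexive sheaf.

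Next I would check that this bijection on each graded piece is compatible with multiplication. Multiplication in $\Cox(V/G)$ is induced from the field of rational functions via the chosen lifting of divisor classes, and multiplication of relative invariants inside $\C(V)$ sends $\C[V]^G_\mu \cdot \C[V]^G_{\mu'}$ into $\C[V]^G_{\mu\mu'}$, matching the addition $[D_\mu] + [D_{\mu'}] = [D_{\mu\mu'}]$ in $\Cl(V/G) \simeq G^\vee$. Since both products are restrictions of multiplication in $\C(V)^{[G,G]}$, the graded isomorphism is automatically a ring isomorphism once the graded pieces are matched. Assembling over all $\mu \in G^\vee$ then gives $\Cox(V/G) \simeq \bigoplus_\mu \C[V]^G_\mu = \C[V]^{[G,G]}$.

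The main obstacle I expect is the careful identification of $\Gamma(V/G, \O(D_\mu))$ with $\C[V]^G_\mu$, i.e.\ the claim that every section of the rank-one reflexive sheaf attached to the class $\mu$ is realized by a genuine $\mu$-semi-invariant in $\C[V]^{[G,G]}$, with no spurious poles or multiplicities introduced along the branch divisor. This is the step where normality of $V/G$, the codimension of the ramification locus, and the precise dictionary between $\Cl(V/G)$ and $G^\vee$ all have to be used in concert; once it is settled, the grading and multiplicativity bookkeeping is formal and follows directly from Lemma~\ref{lemma_decomposition_inv}.
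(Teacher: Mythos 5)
Your proposal is correct, and it follows exactly the route the paper gestures at: the paper's own ``proof'' consists of a citation to \cite[Thm~3.1]{AG_finite} together with the one-line remark that the rank-one modules $\C[V]^G_{\mu}$ of Lemma~\ref{lemma_decomposition_inv} can be identified with the modules of global sections $\Gamma(V/G,\O_{V/G}(D))$ for $D\in\Cl(V/G)\simeq Ab(G)$. What you have written is essentially the content of that citation spelled out -- the dictionary between divisor classes and characters via the cover that is \'etale in codimension one, the identification of sections of the reflexive sheaf $\O(D_\mu)$ with $\mu$-semi-invariants in $\C[V]^{[G,G]}$, and the observation that both products are induced from $\C(V)^{[G,G]}$ so the graded bijection is automatically a ring map. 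The only point worth flagging is that your argument (correctly) uses that the branch locus has codimension $\geq 2$, i.e.\ that $G$ is small; the proposition as stated omits this hypothesis, and it is genuinely needed (for a group generated by pseudo-reflections the two sides differ), but since the paper restricts to small groups throughout this is a defect of the statement rather than of your proof.
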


Note that this is the instance where the considered Cox ring is not graded by a free group, see~\cite[Sect.~4.2]{CoxRings}.

\begin{proof}
The statement is proved in~\cite[Thm~3.1]{AG_finite}.

A description of the module structure of $\Cox(V/G)$ follows also from Lemma~\ref{lemma_decomposition_inv}: rank one $\C[V]^G$-modules $\C[V]_{\mu}^{G}$ in the decomposition of $\C[V]^{[G,G]}$ can be identified with $\O(V/G)$-modules of global sections of $\O_{V/G}(D)$ for $D \in \Cl(V/G) \simeq Ab(G)$.
\end{proof}


\subsection{Generators of $\Cox(X)$}\label{sect_generators}

Let us fix the notation. The coordinate ring of $\C^2$ is denoted by $\C[a,b]$, and of $\C^{n+3}$, which is the ambient space for $S = \scx$ (see Construction~\ref{equation}), by
$$A = \C[y_0,y_{1,1},\ldots,y_{1,n_1},x_1,y_{2,1},\ldots,y_{2,n_2},x_2,y_{3,1},\ldots,y_{3,n_3},x_3].$$

The Picard torus $T \simeq (\C^*)^n$ of the minimal resolution $X$ acts on $\C^{n+3}$ and on $S$ by characters corresponding to columns of the extended intersection matrix~$U$ (see Notation~\ref{notation_def_U}), as described in Definition~\ref{equation_Pic_torus_action}, and its coordinate ring is $$\C[T] = \C[t_0^{\pm1},\ldots,t_{n-1}^{\pm1}].$$

Our aim is to define a monomorphism
$$\phi \colon \Cox(X) \hookrightarrow \C[a,b]^{[G,G]}[t_0^{\pm 1}, \ldots, t_{n-1}^{\pm 1}] = \Cox(\C^2/G)\otimes \C[T]$$
such that composed with evaluation at $t_0=\ldots = t_{n-1} = 1$ it gives the morphism
$$\Cox(X) \lra \Cox(V/G)$$
coming from the push-forward of divisorial sheaves. Then we view $\Cox(X)$ as the subring $\phi(\Cox(X))$ of $\Cox(\C^2/G)\otimes \C[T]$ and give a formula for a set of generators of this ring.
But before we show the construction, let us explain how this idea works in the case of an abelian group $G$.

\begin{ex}\label{example_abelian}
If $G$ is abelian, then we have
$$\Cox(\C^2/G) = \C[a,b]^{[G,G]} = \C[a,b] \quad \hbox{ and } \quad \scx = \C^{|\Sigma(1)|},$$ where $\Sigma$ is the fan of the minimal resolution $X$. The coordinate ring of $\Cox(X)$ is then $\C[x_1,y_1,\ldots,y_n,x_2]$, where $y_i$ correspond to components of the exceptional divisor. We define
$$\phi \colon \Cox(X) = \C[x_1,y_1,\ldots,y_n,x_2] \lra \C[a,b][t_0^{\pm 1},\ldots,t_{n-1}^{\pm 1}]$$
with the formula
$$x_1 \mapsto at_0,\quad x_2 \mapsto bt_{n-1}, \quad y_i \mapsto \chi_i(t_0,\ldots,t_{n-1}),$$
where $\chi_i$ is the character corresponding to the $i$-th column of the intersection matrix of the exceptional divisor of~$X$.

Since the intersection matrix of $X$ is nonsingular (the absolute value of its determinant is just the numerator of the corresponding Hirzebruch-Jung continued fraction), $\phi$ is indeed a monomorphism. Its composition with the evaluation at $t_0=\ldots = t_{n-1} = 1$ gives the toric morphism from $\C^{|\Sigma(1)|}$ to $\C^2$ coming from forgetting about rays of $\Sigma$ added to the fan of $\C^2/G$ in the process of resolution.
\end{ex}

From now on we assume that $G \subset \GL(2,\C)$ is a non-abelian small group. In the abelian case to define $\phi$ we need, apart from the characters of $T$, two elements of $\Cox(\C^2/G)$, which make a generating set of this ring. For non-abelian groups we have to choose three generators with special properties. They may be thought of as sections of sheaves corresponding to divisors of $\C^2/G$ defined by the variables $x_1$, $x_2$, $x_3$, associated with the added columns of the intersection matrix~$U$.

\begin{rem}\label{remark_sigmas}
For all small subgroups $G \subset \GL(2,\C)$ there exist homogeneous polynomials $\sigma_1(a,b)$, $\sigma_2(a,b)$, $\sigma_3(a,b)$ invariant under the action of $[G,G]$ on $\C[a,b]$, which are eigenvectors of the action of $Ab(G)$ on $\C[a,b]^{[G,G]}$ and such that they make a generating set of $\C[a,b]^{[G,G]}$ as a $\C$-algebra. In Example~\ref{example_invariants_eigenvalues} we give a direct proof of existence of such generating sets, i.e. we write them down.

Moreover, such generating sets are uniquely determined up to multiplying its elements by constants. The uniqueness follows by analyzing the numbers of independent $[G,G]$-invariants in small gradations. If we look at Molien series (which can be computed for example in \cite{gap}), it turns out that a few nonzero gradations of smallest degrees have rank 1 and are distributed in such a way that only one choice of $\sigma_i(a,b)$ is possible.

For most small subgroups of $\GL(2,\C)$ the homogeneity condition of $\sigma_i(a,b)$ is forced by the assumption that this polynomial is an eigenvector of $Ab(G)$. However, sometimes it is not -- for example, $Ab(\BI)$ is trivial, so all invariants are the eigenvectors, but only the choice of homogeneous ones gives a correct result.
\end{rem}

\begin{defn}\label{definition_sigmas}
By $\sigma_i(a,b) \in \C[a,b]^{[G,G]}$ for $i=1,2,3$ we denote homogeneous polynomials satisfying conditions in Remark~\ref{remark_sigmas}, i.e. eigenvectors of the action of $Ab(G)$ on $\C[a,b]^{[G,G]}$ such that the set $\{\sigma_1(a,b), \sigma_2(a,b), \sigma_3(a,b)\}$ generates $\C[a,b]^{[G,G]}$ as a $\C$-algebra.

We will assume that they are ordered such that the numbers $\deg(\sigma_i)\cdot (\alpha_i)_{n_i+1}$ are equal (as usually, $\alpha_i$ denotes the vector of exponents in the $i$-th monomial in the equation of $\Spec(\Cox(X))$, see Construction~\ref{equation}).
\end{defn}

In the description of $\phi$ we also use the characters of the Picard torus $T$, so we recall and introduce some notation.

\begin{notation}
As before, $\chi_i(t_0,\ldots, t_{n-1})$ denotes the monomial with exponents given by the $i$-th column of $U$, i.e. the $i$-th character of the Picard torus $T$ used to define its action on $\C^{n+3}$ in Definition~\ref{equation_Pic_torus_action}. Also, when we write $\chi_{x_i}$, $\chi_{y_0}$ or $\chi_{y_{i,j}}$, we think of the character from $\{\chi_1,\ldots,\chi_{n+3}\}$ which corresponds to the respective variable of the coordinate ring $A$ of $\C^{n+3}$ (the order of their appearance is as in the definition of $A$ above).
\end{notation}

We start from defining a homomorphism $$\ovl{\phi} \colon A \lra \Cox(\C^2/G)\otimes \C[T]$$ and then prove that it factors through $\Cox(X) = A/I(S)$, where $I(S)$ is the ideal of $\scx$ in $A$.

\begin{defn}\label{def_parametrization}
Define $\ovl{\phi} \colon A \lra \C[a,b]^{[G,G]}[t_0^{\pm 1}, \ldots, t_{n-1}^{\pm 1}]$ as follows:
\begin{align*}
\ovl{\phi}(x_i) &= \sigma_i(a,b)\chi_{x_i}(t_0,\ldots, t_{n-1}),\\
\ovl{\phi}(y_0) &= \chi_{y_0}(t_0,\ldots, t_{n-1}),\\
\ovl{\phi}(y_{i,j}) &= \chi_{y_{i,j}}(t_0,\ldots, t_{n-1}) \hbox{ for $i= 1,2,3$, $j = 1,\ldots,n_i$.}
\end{align*}
\end{defn}

\begin{lem}\label{lemma_f_factors}
Using the embedding $$S = \scx \subset \C^{n+3} = \Spec(A)$$ given by equation~\ref{equation_equation_S} generating the ideal $I(S) \subset A$, the homomorphism $\ovl{\phi}$ factors through $$\phi\colon A/I(S) = \Cox(X) \lra \C[a,b]^{[G,G]}[t_0^{\pm 1}, t_1^{\pm 1}, \ldots, t_{n-1}^{\pm 1}].$$
\end{lem}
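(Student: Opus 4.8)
The plan is to verify that $\ovl\phi$ annihilates the single generator of the ideal $I(S)$, namely the trinomial of equation~(\ref{equation_equation_S}); once we know $\ovl\phi(I(S)) = 0$, the homomorphism descends to $A/I(S)$ and produces the desired $\phi$. Thus the whole matter reduces to computing $\ovl\phi$ on each of the three monomials and checking that the three images sum to zero. Note that $y_0$ does not occur in the generator of $I(S)$, so only the variables $y_{i,j}$ and $x_i$ are involved.

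First I would compute the image of the $i$-th monomial. By Definition~\ref{def_parametrization}, separating the $\C[a,b]^{[G,G]}$-part from the torus part gives
$$\ovl\phi\Bigl(y_{i,1}^{(\alpha_i)_1}\cdots y_{i,n_i}^{(\alpha_i)_{n_i}}\, x_i^{(\alpha_i)_{n_i+1}}\Bigr) = \sigma_i(a,b)^{(\alpha_i)_{n_i+1}}\cdot \prod_{j=1}^{n_i}\chi_{y_{i,j}}^{(\alpha_i)_j}\cdot \chi_{x_i}^{(\alpha_i)_{n_i+1}}.$$
The product of characters on the right is $\ul{t}^{\,Ue}$, where $e$ is the vector carrying $\alpha_i$ in the $i$-th branch block and zeros elsewhere; this is exactly the vector $(0,\alpha_1,0,0)$, $(0,0,\alpha_2,0)$ or $(0,0,0,\alpha_3)$ appearing in the proof of Lemma~\ref{lemma_S_is_invariant}. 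Since $\alpha_i$ is orthogonal to the $i$-th branch, that proof shows $Ue = (1,0,\ldots,0)$, so the character collapses to $t_0$. Hence the $i$-th monomial maps to $\sigma_i(a,b)^{(\alpha_i)_{n_i+1}}\, t_0$, and summing over the three branches,
$$\ovl\phi\Bigl(\sum_{i=1,2,3} y_{i,1}^{(\alpha_i)_1}\cdots y_{i,n_i}^{(\alpha_i)_{n_i}}\, x_i^{(\alpha_i)_{n_i+1}}\Bigr) = t_0\bigl(\sigma_1^{p_1}+\sigma_2^{p_2}+\sigma_3^{p_3}\bigr),$$
where $p_i = (\alpha_i)_{n_i+1}$ as in Lemma~\ref{lemma_outer_rays}.

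It therefore remains to see that $\sigma_1^{p_1}+\sigma_2^{p_2}+\sigma_3^{p_3}=0$ in $\C[a,b]^{[G,G]}$. This is precisely the relation identified in the proof of Lemma~\ref{lemma_image_is_singularity}: the ring $\C[a,b]^{[G,G]}$ is the coordinate ring of the Du Val singularity $\C^2/[G,G]$, whose embedding in $\C^3$ is cut out by $x_1^{p_1}+x_2^{p_2}+x_3^{p_3}=0$ under the identification $x_i\leftrightarrow\sigma_i$. The degree condition built into Definition~\ref{definition_sigmas}, namely that $\deg(\sigma_i)\cdot p_i$ be independent of $i$, guarantees that the three terms are homogeneous of one and the same degree, so that this trinomial is indeed the defining relation. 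Consequently $\ovl\phi$ sends the generator of $I(S)$ to zero and factors through $\phi\colon A/I(S)=\Cox(X)\to \C[a,b]^{[G,G]}[t_0^{\pm1},\ldots,t_{n-1}^{\pm1}]$.

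The one point requiring genuine care is the matching of coefficients. A priori the relation among the chosen eigenvector-generators reads $c_1\sigma_1^{p_1}+c_2\sigma_2^{p_2}+c_3\sigma_3^{p_3}=0$ with nonzero constants $c_i$, and in the $BT_m$ case one must first pass to the eigenvector combinations of Remark~\ref{remark_sigmas} merely to put the relation into trinomial form. Since each $\sigma_i$ is determined only up to a scalar (Remark~\ref{remark_sigmas}), I would rescale $\sigma_i$ by a $p_i$-th root of $c_i^{-1}$ to normalize all coefficients to $1$; equivalently, one absorbs the $c_i$ into the coefficients of the trinomial defining $S$, which are arbitrary by Construction~\ref{equation}. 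With this normalization the computation above closes, and $\ovl\phi$ factors as claimed.
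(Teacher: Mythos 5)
Your proposal is correct and follows essentially the same route as the paper: both reduce the claim to showing that the image of each branch monomial is $t_0\cdot\sigma_i^{p_i}$ (via orthogonality of $\alpha_i$, as in Lemma~\ref{lemma_S_is_invariant}) and then invoke the single relation $\sigma_1^{p_1}+\sigma_2^{p_2}+\sigma_3^{p_3}=0$ in $\C[a,b]^{[G,G]}$, with the exponents matched through Lemma~\ref{lemma_outer_rays}. Your explicit normalization of the constants $c_i$ by rescaling the $\sigma_i$ is a slightly more careful treatment of the point the paper dispatches with ``up to multiplication by some constants.''
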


\begin{proof} We show that the image under $\ovl{\phi}$ of the equation of $\scx$, described in Construction~\ref{equation}, is zero. This equation is the sum of three monomials corresponding to branches of the minimal resolution diagram. The vector of exponents of the $i$-th monomial is $\alpha_i$, which is orthogonal to the $i$-th branch (see Definition~\ref{definition_orthogonal_to_branch}). This condition translates exactly to the fact that the image of the $i$-th monomial under $\ovl{\phi}$ is $t_0\cdot \sigma_i(a,b)^{(\alpha_i)_{n_i+1}}$. Hence it is sufficient to show that $$\sigma_1(a,b)^{(\alpha_1)_{n_1+1}} + \sigma_2(a,b)^{(\alpha_2)_{n_2+1}} + \sigma_3(a,b)^{(\alpha_3)_{n_3+1}} = 0.$$

From Lemma~\ref{lemma_outer_rays} we know that $(\alpha_i)_{n_i+1} = p_i$, i.e. the numerator of the fraction describing the $i$-th branch of the resolution diagram. We compare these numbers to the exponents in equations of Du Val singularities $\C^2/[G,G]$, exactly as in the proof of Proposition~\ref{sing_quot} -- they are the same.
Hence it is enough to check that $\sigma_1(a,b), \sigma_2(a,b), \sigma_3(a,b)$ satisfy the single relation in $\C[a,b]^{[G,G]}$ (up to multiplication by some constants). This can be done in a straightforward way, since the sets $\{\sigma_1(a,b), \sigma_2(a,b), \sigma_3(a,b)\}$ for all small subgroups $G \subset \GL(2, \C)$ are listed in Example~\ref{example_invariants_eigenvalues}.
\end{proof}

\begin{notation}
We denote by $$\psi\colon \C[a,b]^{[G,G]}[t_0^{\pm 1},\ldots,t_{n-1}^{\pm 1}] \lra \C[a,b]^{[G,G]}$$ the homomorphism of evaluation at $t_0=\ldots = t_{n-1} = 1$, that is $\psi|_{\C[a,b]^{[G,G]}} = id$ and $\psi(t_i) = 1$ for $i = 0,\ldots,n-1$. In a geometric picture it is just an embedding of $\C^2/[G,G]$ in $\C^2/[G,G]\times T$ to $\C^2/[G,G]\times \{1\}$.
\end{notation}

Note that the composition $\psi \circ \phi$ is the map $\Cox(X) \ra \Cox(\C^2/G)$ induced by pushing forward of divisor classes and associated push-forward of sections of corresponding sheaves.

\begin{lem}\label{lemma_f_mono}
The homomorphism $\phi$ is a monomorphism.
\end{lem}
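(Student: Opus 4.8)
The plan is to prove injectivity by a transcendence degree computation, exploiting that $\Cox(X)=A/I(S)$ is an integral domain. First I would record that $S$ is irreducible (Construction~\ref{equation}) and has dimension $n+2$, being a hypersurface in $\C^{n+3}$; hence $\Cox(X)$ is a domain with $\operatorname{trdeg}_{\C}\operatorname{Frac}(\Cox(X))=n+2$. Since $\phi$ is a homomorphism out of a domain, its image is isomorphic to $\Cox(X)/\ker\phi$, and a quotient of a domain by a nonzero prime ideal has strictly smaller transcendence degree. Thus $\ker\phi=0$ as soon as I exhibit $n+2$ elements of $\phi(\Cox(X))$ that are algebraically independent over $\C$.

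The natural candidates live in the target $\C[a,b]^{[G,G]}[t_0^{\pm1},\ldots,t_{n-1}^{\pm1}]$, which has transcendence degree $n+2$ (two from $\C[a,b]^{[G,G]}=\Cox(\C^2/G)$ and $n$ from the torus). I would take the $n$ Laurent monomials $\phi(y_0)=\chi_{y_0}$ and $\phi(y_{i,j})=\chi_{y_{i,j}}$ together with two of the three images $\phi(x_i)=\sigma_i(a,b)\,\chi_{x_i}$. The exponent vectors of the characters $\chi_{y_0},\chi_{y_{i,j}}$ are exactly the columns of the (non-extended) intersection matrix $U^0$ of the exceptional divisor; since $U^0$ is negative definite, hence nonsingular, these vectors are linearly independent, so the $n$ monomials are algebraically independent over $\C$ and form a transcendence basis of $F:=\C(t_0,\ldots,t_{n-1})$. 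For the remaining two I would choose indices $i\neq j$ with $\sigma_i(a,b),\sigma_j(a,b)$ algebraically independent over $\C$; such a pair exists because $\sigma_1,\sigma_2,\sigma_3$ generate $\C[a,b]^{[G,G]}$ (Definition~\ref{definition_sigmas}), a ring of transcendence degree $2$, so a transcendence basis can be extracted from them.

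It then remains to show that the full collection of $n+2$ elements is algebraically independent over $\C$. Because the $\chi_{y}$'s already form a transcendence basis of $F/\C$, it suffices to prove that $\phi(x_i)$ and $\phi(x_j)$ are algebraically independent over $F$. Here I would use that the variables $a,b$ are disjoint from the torus variables: $\sigma_i,\sigma_j\in\C[a,b]$ are transcendental over $F$ and remain algebraically independent over $F$. If $P(\phi(x_i),\phi(x_j))=0$ for a polynomial $P$ with coefficients in $F$, then expanding gives $\sum_{(k,l)} c_{kl}\,\chi_{x_i}^{k}\chi_{x_j}^{l}\,\sigma_i^{k}\sigma_j^{l}=0$ with $c_{kl}\in F$ and $\chi_{x_i}^{k}\chi_{x_j}^{l}\in F$; since the monomials $\sigma_i^{k}\sigma_j^{l}$ are linearly independent over $F$, every coefficient $c_{kl}\chi_{x_i}^{k}\chi_{x_j}^{l}$, and hence every $c_{kl}$, vanishes, so $P=0$. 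This produces the required $n+2$ algebraically independent elements and finishes the argument.

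The one point needing genuine care — and which I regard as the main (modest) obstacle — is isolating the contribution of the invariants: one must pick the two indices for which the $\sigma_i$ are algebraically independent, since the three of them satisfy the single relation $\sum_i\sigma_i^{p_i}=0$, which is precisely the relation absorbed upon passing to $A/I(S)$ (cf. Lemma~\ref{lemma_f_factors}), and one must keep the $a,b$-direction and the $t$-direction separate throughout the independence computation. Everything else is bookkeeping.
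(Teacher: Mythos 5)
Your proof is correct, and it takes a genuinely different route from the paper. The paper argues by explicit reduction: given $w$ with $\ovl{\phi}(w)=0$, it multiplies by a suitable $v$ and subtracts a multiple of the trinomial to lower the $x_3$-degree below $p_3=(\alpha_3)_{n_3+1}$, observes that the resulting expression in $\sigma_1,\sigma_2,\sigma_3$ would have to be divisible by the single relation but cannot be for degree reasons, and then kills the remaining coefficients using the linear independence of the columns of $U^0$ together with the algebraic independence of $\sigma_1,\sigma_2$; primality of $I(S)$ finishes the argument. You instead run a transcendence-degree count: $S$ irreducible of dimension $n+2$ makes $\Cox(X)$ a domain of transcendence degree $n+2$, so it suffices to exhibit $n+2$ algebraically independent elements in the image, which you assemble from the $n$ Laurent monomials (independent because $U^0$ is nonsingular) and two of the $\sigma_i\chi_{x_i}$ (independent over $\C(t_0,\ldots,t_{n-1})$ because the $a,b$-variables and the torus variables are disjoint and two of the $\sigma_i$ can be chosen algebraically independent). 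The two arguments lean on the same three inputs --- primality of $I(S)$, nonsingularity of $U^0$, and independence of two of the $\sigma_i$ --- but yours packages them into a clean dimension argument that avoids the paper's somewhat delicate division step, at the cost of being less explicit about what the relation in $\ker\ovl{\phi}$ actually is; the paper's version, by contrast, effectively re-derives that $I(S)$ is the full kernel of $\ovl{\phi}$ by hand, which is closer in spirit to how one would try to find the ideal if it were not already known.
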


\begin{proof} Assume that a polynomial $w \in A$ is in $\ker \ovl{\phi}$. Multiplying $w$ by a suitable $v \in A$ which does not contain $x_3$ and subtracting some multiple of the generator of the ideal $I(S) \subset A$ of $S = \scx$, we get $w' \in \ker \ovl{\phi}$ such that its degree as a polynomial of one variable $x_3$ is smaller than $(\alpha_3)_{n_3+1}$.

Consider $\psi(\ovl{\phi}(w'))$, think of it as of an expression in $\sigma_1$, $\sigma_2$, $\sigma_3$ (see Definitions~\ref{definition_sigmas} and~\ref{def_parametrization}). It is 0, so this expression must be divisible by the single relation between $\sigma_1$, $\sigma_2$, $\sigma_3$. But this is impossible, because the degree of this expression as a polynomial of $\sigma_3$ is too small. This means that if we look at $w'$ as a polynomial of $x_3$ again, the polynomials of variables $x_1$, $x_2$, $y_0$, $y_{i,j}$ which are its coefficients are mapped by $\ovl{\phi}$ to~0.

However, $\sigma_1$, $\sigma_2$ and these characters $\chi_i(t_0,\ldots, t_{n-1})$ which do not correspond to the variables $x_i$ are independent. This follows by the fact that the columns of the matrix $U_0$ of intersection numbers of components of the exceptional fiber of $X$ are linearly independent. Moreover, $\sigma_1$ and $\sigma_2$ are independent, because the relation in $\C[a,b]^{[G,G]}$ involves $\sigma_3$. Therefore all coefficients of $w'$ viewed as a polynomial of $x_3$ are just 0, so $wv \in I(S)$ and finally, because $I(S)$ is prime and $v \notin I(S)$, we obtain $w \in I(S)$.
\end{proof}

As a direct result of Lemmata~\ref{lemma_f_factors} and~\ref{lemma_f_mono} we obtain

\begin{thm}\label{theorem_parametrization}
$\Cox(X) \subset \C[a,b]^{[G,G]}[t_0^{\pm 1}, t_1^{\pm 1}, \ldots, t_{n-1}^{\pm 1}]$ is generated by the images of the variables under $\phi$, as listed in Definition~\ref{def_parametrization}, i.e.
\begin{enumerate}
\item $\sigma_i(a,b)\cdot\chi_{k(i)}(t_0,\ldots, t_{n-1})$ for $i = 1,2,3$ and
\item $\chi_0(t_0,\ldots, t_{n-1})$ and $\chi_{k_{i,j}}(t_0,\ldots, t_{n-1})$ for $i=1,2,3$, $j=1,\ldots,n_i$.
\end{enumerate}
\end{thm}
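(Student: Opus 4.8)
The plan is to assemble the statement from the two preceding lemmas together with the identification $\Cox(X) \simeq A/I(S)$ furnished by Theorem~\ref{main_thm}. First I would observe that $A$ is a polynomial ring, hence generated as a $\C$-algebra by the variables $y_0$, the $y_{i,j}$ and the $x_i$; passing to the quotient $A/I(S) = \Cox(X)$, the Cox ring is therefore generated by the classes of these variables. This reduces the problem to understanding how $\phi$ acts on this finite generating set.

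Next I would invoke Lemma~\ref{lemma_f_factors}, which guarantees that $\ovl{\phi}$ descends to a well-defined ring homomorphism $\phi\colon \Cox(X) \to \C[a,b]^{[G,G]}[t_0^{\pm 1},\ldots,t_{n-1}^{\pm 1}]$. By Definition~\ref{def_parametrization} this $\phi$ sends each algebra generator to the corresponding element of the list in the statement: the variable $x_i$ maps to $\sigma_i(a,b)\chi_{x_i}(t_0,\ldots,t_{n-1})$, the variable $y_0$ to $\chi_{y_0}(t_0,\ldots,t_{n-1})$, and each $y_{i,j}$ to $\chi_{y_{i,j}}(t_0,\ldots,t_{n-1})$. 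Since a ring homomorphism carries a generating set of its source onto a generating set of its image, $\phi(\Cox(X))$ is precisely the subalgebra of $\C[a,b]^{[G,G]}\otimes\C[T]$ generated by the elements listed in (1) and (2).

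Finally I would appeal to Lemma~\ref{lemma_f_mono}, the injectivity of $\phi$, which yields an isomorphism $\Cox(X) \simeq \phi(\Cox(X))$ onto this subalgebra. Under this identification $\Cox(X)$ is exactly the subring generated by the listed elements, which is the assertion. At this stage I expect no genuine obstacle to remain: the whole technical weight of the argument has already been discharged in Lemmata~\ref{lemma_f_factors} and~\ref{lemma_f_mono} — namely the verification that $\ovl{\phi}$ annihilates the trinomial relation of $S$, and the degree-in-$x_3$ together with the independence argument establishing $\ker\phi = 0$. The only point requiring care in the assembly is the purely bookkeeping check that the images of the generators under $\phi$ match, term by term, the elements displayed in the theorem, and this is immediate from Definition~\ref{def_parametrization}.
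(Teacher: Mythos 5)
Your proposal is correct and matches the paper's argument exactly: the paper states the theorem as a direct consequence of Lemmata~\ref{lemma_f_factors} and~\ref{lemma_f_mono}, and your assembly — generators of $A/I(S)=\Cox(X)$ from the polynomial-ring presentation, well-definedness from the first lemma, injectivity from the second — is precisely the implicit reasoning. Nothing further is needed.
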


Look at the composition
$$\Cox(X) \xlra{\phi} \Cox(\C^2/G)[t_0^{\pm 1}, t_1^{\pm 1}, \ldots, t_{n-1}^{\pm 1}] \xlra {\psi} \Cox(\C^2/G),$$
that is the push-forward homomorphism between Cox rings. Theorem~\ref{theorem_parametrization} mentions two kinds of generators of $\Cox(X)$. These from the first group are pull-backs of generators of $\Cox(\C^2/G)$, which come from the eigenspaces of $Ab(G)$-action on $\C[a,b]^{[G,G]}$. In particular, they are mapped to nontrivial elements of $\Cox(\C^2/G)$ by the push-forward homomorphism. Other are mapped via $\psi\circ \phi$ to $1 \in \Cox(\C^2/G)$, and they depend only on the Picard torus action on $\Cox(X)$, which in fact induces~$\phi$. We may say that generators of the first kind reflect the structure of the group~$G$ and these of the second kind contain the information on the intersection numbers of components in the exceptional divisor of the minimal resolution $X$. This idea of describing the generators of $\Cox(X)$ seems more general than just the two-dimensional case. In fact, in~\cite{cox_resolutions} we prove that for any (minimal) resolution $X$ of a quotient singularity $V/G$ there is a monomorphism
$$\Cox(X) \hookrightarrow \Cox(V/G) \otimes \C[T],$$
constructed using general ideas sketched in this chapter, and we attempt to find generators of this embedding in chosen cases.

The following remark gives another point of view on the situation of Theorem~\ref{theorem_parametrization}.
\begin{rem}\label{remark_torsor_image}
Define an action of $Ab(G) \simeq G^{\vee}$ on $\C[a,b]^{[G,G]}[t_0^{\pm 1}, t_1^{\pm 1}, \ldots, t_{n-1}^{\pm 1}]$ as follows: take $g\in Ab(G)$, then
\begin{itemize}
\item the action of $g$ on $\sigma_i$ is induced by the considered representation of $G$ on $\C[a,b]$; then by definition of $\sigma_i$ we have $g\cdot \sigma_i = c_i\cdot \sigma_i$,
\item we put $g\cdot t_{k(i)} = c_i^{-1} \cdot t_{k(i)}$; recall that $t_{k(i)} = \chi_{x_i}(t_0,\ldots,t_{n-1})$ is the character of $T$ corresponding to $x_i$, so the action is defined such that $\phi (x_i)$ are its fixed points,
\item we extend the above definition (to other coordinates of $T$) such that all characters $\chi_{y_{i,j}}$ and $\chi_{y_0}$ of $T$ are fixed by this action; one can check that these conditions determine the action uniquely.
\end{itemize}
We see that the image of $\phi$ is a subring of the invariant ring of this action.
\end{rem}

We finish with a few words about the geometric meaning of these results. The dual map to $\phi$ is just the morphism from the torus bundle to the spectrum of the Cox ring:
$$\phi_{\#} \colon \C^2/[G,G] \times T \lra \scx.$$ Since $\phi$ is a monomorphism, the dual $\phi_{\#}$ is a dominant map. Moreover, it factors as the quotient by the action of $Ab(G)$ described in Remark~\ref{remark_torsor_image} followed by an embedding. One can prove this by checking that the cardinality of fibres over points in the image of $\phi_{\#}$ is equal to $|Ab(G)|$. This follows by analyzing the set of characters of $T$ which define $\phi$ (given by columns of the intersection matrix) and applying the information from~\cite[Satz~2.11]{Brieskorn} in a similar way as in the proof of Lemma~\ref{lemma_image_is_singularity}; we skip the computations.

It is worth noting that the image of $\phi$ is not isomorphic to the ring of invariants of the above $Ab(G)$-action, i.e. the image of $\phi_{\#}$ is a proper subset of $\scx$. This can be seen already in the case of an abelian group $G$, where the quotient is toric, see Example~\ref{example_abelian}. Then the fan of $\C^2/[G,G]\times T = \C^2\times T$ has just two rays. The fan of its $Ab(G)$-quotient also, since it has the same set of cones, only the lattice is denser. But $\scx \simeq \C^{\Sigma(1)}$ (where as usual $\Sigma$ is the fan of the minimal resolution of $\C^2/G$), hence its fan has more than two rays, so is cannot be isomorphic to the fan of $(\C^2\times T)/Ab(G)$. Roughly speaking, this means that the ring of invariants of the considered $Ab(G)$-action does not contain the information about the divisors in $\scx$ corresponding to the characters of~$T$ given by columns of the intersection matrix. Analogous observations can be made for quotients by non-abelian groups.

\subsection{Examples}\label{sect_gen_examples}
The examples below describe the homomorphism $\phi$ and the generators of $\Cox(X)$ explicitly in a few interesting cases. Also, in Example~\ref{example_invariants_eigenvalues}, we list the eigenvectors of $Ab(G)$ which generate  $\C[a,b]^{[G,G]}$ for all small groups $G \subset \GL(2,\C)$.

\begin{ex}[Binary dihedral groups $\BD_{4n}$]\label{example_parametrization_BD}
We consider the case of Du Val singularities, which was investigated in~\cite{FGAL} but without describing generators of the Cox ring. In this example we correct a mistake in~\cite[p.~9]{FGAL} -- below we provide a set of equations for an embedding of the du Val singularity $D_n$ for odd $n$ in $\C^6$.

The commutator subgroup of $\BD_{4n}$ is $\Z_n = \langle \diag(\eps_n, \eps_n^{-1}) \rangle$. The ring of invariants of the action of $[\BD_{4n}, \BD_{4n}]$ on $\C[x,y]$ is generated by
$xy$, $x^n$ and $y^n$. However, only the first monomial is an eigenvector of the action of $Ab(\BD_{4n})$ on this ring of invariants and we have to find suitable linear combinations of the remaining two (see  Example~\ref{example_invariants_eigenvalues}).
As before, the coordinates on $\C^2 \times (\C^*)^n$ are $(a,b, t_0, \ldots, t_{n-1})$.

If $n$ is even then the generators of $\Cox(X)$ are
\begin{align*}
\phi(x_j) &\::\quad i(a^n+b^n)t_1, (a^n-b^n)t_2, 2^{\frac{2}{n}}abt_{n-1},\\
\phi(y_0),\phi(y_{i,j}) &\::\quad \frac{t_1t_2t_3}{t_0^2}, \frac{t_0}{t_1^2}, \frac{t_0}{t_2^2}, \frac{t_0t_4}{t_3^2}, \frac{t_3t_5}{t_4^2}, \frac{t_4t_6}{t_5^2},\ldots , \frac{t_it_{i+2}}{t_{i+1}^2},\ldots, \frac{t_{n-3}t_{n-1}}{t_{n-2}^2}, \frac{t_{n-2}}{t_{n-1}^2}.
\end{align*}

And if $n$ is odd, we have
\begin{align*}
\phi(x_j) &\::\quad  (-ia^n+b^n)t_1, (a^n-ib^n)t_2, 2^{\frac{2}{n}}abt_{n-1}, \\
\phi(y_0),\phi(y_{i,j}) &\::\quad \frac{t_1t_2t_3}{t_0^2}, \frac{t_0}{t_1^2}, \frac{t_0}{t_2^2}, \frac{t_0t_4}{t_3^2}, \frac{t_3t_5}{t_4^2}, \frac{t_4t_6}{t_5^2},\ldots , \frac{t_it_{i+2}}{t_{i+1}^2},\ldots, \frac{t_{n-3}t_{n-1}}{t_{n-2}^2}, \frac{t_{n-2}}{t_{n-1}^2}.
\end{align*}

We can use the formula for $\phi$ (more precisely, for the associated morphism of varieties) in the case of odd $n$ to correct a false statement on page~9 of~\cite{FGAL}.
The authors describe the quotient of $\C^{n+3}$ by the Picard torus action~as
\begin{multline*}
V = \{Z_2^4-Z_5Z_6 = Z_1Z_2^2-Z_3Z_4 = Z_2^2Z_4-Z_3Z_6 =\\ = Z_2^2Z_3-Z_4Z_5 = Z_4^2-Z_1Z_6 = Z_3^2-Z_1Z_5 = 0\}
\end{multline*}
and attempt to realize $\C^2/\BD_{4n}$ as a subvariety of $V$. They suggest that it is isomorphic to $$V' = V\cap \{Z_1^k + Z_3 + Z_4 = 0\},$$ where $k = (n-1)/2$. However, this variety is reducible. One component (of dimension 2) is given by $Z_1 = Z_3 = Z_4 = Z_2^4-Z_5Z_6 = 0$
and the second one, isomorphic to $\C^2/\BD_{4n}$, is the closure of the set of points of $V'$ with at least one of $Z_1, Z_3, Z_4$ nonzero.

To obtain the full set of equations of the second component we first apply the quotient morphism described in~\cite[Lemma~4.2]{FGAL} to the image of $\phi$, i.e. we compute monomials $Z_1,\ldots, Z_6$.

The relations between these monomials for a few small values of $n$ can be computed for example in Singular,~\cite{Singular}. Thus we find two more equations, namely $$Z_1^{k-1}Z_3+Z_2^2+Z_5=0 \quad \hbox{and} \quad Z_1^{k-1}Z_4+Z_2^2+Z_6=0.$$ It turns out that they are sufficient for all odd $n$. i.e.
$$V\cap \{Z_1^k + Z_3 + Z_4 = Z_1^{k-1}Z_3+Z_2^2+Z_5 = Z_1^{k-1}Z_4+Z_2^2+Z_6 = 0\}$$
is irreducible and by a direct computation one can check that its coordinate ring is isomorphic to the one of $\C^2/\BD_{4n}$.

This observation does not change anything in the main results of~\cite{FGAL}. However, this is a convincing example that the ideas used there may be hard to generalize to more complicated singularities.
\end{ex}

\begin{ex}\label{example_invariants_eigenvalues}
Let $G$ be a finite nonabelian small subgroup of $\GL(2,\C)$. We compute the eigenvectors of the induced action of $Ab(G)$ which generate $\C[x,y]^{[G,G]}$. We use the list of generators of rings of $[G,G]$-invariants from \cite{invariants} and Corollary~\ref{abelianizations}. The data included in this example, together with the description of the exceptional divisor of the minimal resolution of~$\C^2/G$ given in~section~\ref{resolution_intro}, is entirely sufficient to write down $\phi$ explicitly in all considered cases.
\begin{enumerate}
\item For $G = \BD_{n,m}$ we have $[G,G] \simeq \Z_n \subset \SL(2,\C)$. The invariants of $[G,G]$ are generated by
$$xy, \quad x^n, \quad y^n$$ with the relation $(xy)^n-x^ny^n = 0$.
Invariants that are eigenvectors of $Ab(G)$ are
$$xy, \quad x^n+y^n, \quad x^n-y^n$$
for even $n$ and
$$xy, \quad x^n+iy^n, \quad x^n-iy^n$$
for odd $n$.

\item For $G = BT_m$ the commutator subgroup is $[G, G] = \BD_2$ and its invariants are generated by $$x^2y^2, \quad x^4+y^4, \quad xy(x^4-y^4)$$ with the relation $-4(x^2y^2)^3 + (x^2y^2)(x^4+y^4)^2 - (xy(x^4-y^4))^2 = 0$. The last polynomial is an invariant of $BT$, hence also an invariant of $Ab(BT_m)$. The remaining eigenvectors of $Ab(G)$ are
$$x^4+y^4 + 2i\sqrt{3}x^2y^2 \quad \hbox{ and } \quad x^4+y^4 - 2i\sqrt{3}x^2y^2.$$

\item For $G = BO_m$ the invariants of $[G,G] = BT$ are generated by
$$ A = \sqrt[4]{108}xy(x^4-y^4),\qquad B = -(x^8 +14x^4y^4+y^8),$$ $$C = x^{12}-33x^8y^4-33x^4y^8+y^{12}$$ with the relation $A^4 + B^3 + C^2 = 0$. Moreover, these generators lie in eigenspaces of $Ab(G)$.

\item Finally, for $G = \BI_m$ the invariants of $[G,G] = \BI$ are generated by
$$D = \sqrt[5]{1728}xy(x^{10} + 11x^5y^5 - y^{10}),$$ $$E = - (x^{20} + y^{20}) + 228(x^{15}y^5 - x^5y^{15}) - 494x^{10}y^{10},$$
$$F = x^{30}+y^{30} + 522(x^{25}y^{5}-x^5y^{25}) - 10005(x^{20}y^{10}+x^{10}y^{20})$$
with the relation $D^5 + E^3 + F^2 = 0$. As before, these generators lie in eigenspaces of $Ab(G)$.
\end{enumerate}
\end{ex}

\begin{ex}
Let us write down the generators of $\Cox(X)$ in a case of $G = \BD_{23,39}$. It was already explored in Examples~\ref{example_big_diagram} and~\ref{example_big_diagram_2}, where the dual graph of the exceptional divisor and the extended intersection matrix are shown. As before, choose the coordinates on $\C^{10}$ to be $$(y_0,y_{1,1},x_1,y_{2,1},x_2, y_{3,1}, y_{3,2}, y_{3,3}, y_{3,4},x_3).$$

We have $[\BD_{23,39}, \BD_{23,39}] \simeq \Z_{23} \subset \SL(2, \C)$ and $n$ is odd, so the generators are
\begin{multline*}
\phi(x_1) = (-ia^{23}+b^{23})t_1,\quad \phi(x_2) = (a^{23}-ib^{23})t_2,\quad \phi(x_3) = -i\sqrt[23]{4}abt_6,\\
\phi(y_0) = t_1t_2t_3t_0^{-3},\quad \phi(y_{1,1}) = t_0t_1^{-2},\quad \phi(y_{2,1}) = t_0t_2^{-2},\\
\phi(y_{3,1}) = t_0t_4t_3^{-4},\quad \phi(y_{3,2}) = t_3t_5t_4^{-2},\quad \phi(y_{3,3}) = t_4t_6t_5^{-2},\quad \phi(y_{3,4}) = t_5t_6^{-3}.
\end{multline*}
\end{ex}

\begin{ex}
There is a case where the morphism of varieties $\phi_{\#}$ induced by~$\phi$ is an embedding of the trivial torus bundle over the singularity $\C^2/\BI$ in $\scx$: the Du Val singularity $E_8$. This is because $[\BI,\BI] \simeq \BI$, so the abelianization is trivial. By Remark~\ref{remark_torsor_image}, the morphism $$\phi_{\#} \colon\C^2/[\BI,\BI]\times (\C^*)^8 \ra \scx$$ is then a quotient by the trivial group action, so the image is isomorphic to $\C^2/\BI\times (\C^*)^8 \subset \scx$.
\end{ex}

\bibliographystyle{amsalpha}
\bibliography{xbib}

\providecommand{\bysame}{\leavevmode\hbox to3em{\hrulefill}\thinspace}
\providecommand{\MR}{\relax\ifhmode\unskip\space\fi MR }
\providecommand{\MRhref}[2]{%
  \href{http://www.ams.org/mathscinet-getitem?mr=#1}{#2}
}
\providecommand{\href}[2]{#2}
\begin{thebibliography}{ADHL10}

\bibitem[ADHL10]{CoxRings}
Ivan Arzhantsev, Ulrich Derenthal, J\"urgen Hausen, and Antonio Laface,
  \emph{Cox {R}ings}, arXiv:1003.4229 [math.AG] (2010).

\bibitem[AG10]{AG_finite}
Ivan~V. Arzhantsev and Sergey~A. Gaifullin, \emph{Cox rings, semigroups and
  automorphisms of affine algebraic varieties}, Sbornik: Mathematics
  \textbf{201} (2010), no.~1, 3--24.

\bibitem[AW10]{AW}
Marco Andreatta and Jaros{\l}aw~A. Wi\'{s}niewski, \emph{On the {Kummer}
  {Construction}}, Revista Matematica Complutense \textbf{23} (2010), 191--251.

\bibitem[BBS96]{BBSw2}
Andrzej Bia{\l}ynicki-Birula and Joanna \'{S}wi\k{e}cicka, \emph{Open subset of
  projective spaces with a good quotient by an action of a reductive group},
  Transformation Groups \textbf{1} (1996), 153--185.

\bibitem[Ben93]{Benson}
David~J. Benson, \emph{Polynomial invariants of finite groups}, LMS Lecture
  Notes Series, vol. 190, Cambridge University Press, 1993.

\bibitem[Bri68]{Brieskorn}
Egbert Brieskorn, \emph{Rationale {Singularit\"{a}ten} komplexer
  {Fl\"{a}chen}}, Inventiones Mathematicae \textbf{4} (1968), 336--358.

\bibitem[CLS11]{ToricBook}
David Cox, John Little, and Hal Schenck, \emph{Toric varieties}, Graduate
  Studies in Mathematics, vol. 124, American Mathematical Society, 2011.

\bibitem[DBW13]{cox_resolutions}
Maria Donten-Bury and Jaros{\l}aw~A. Wi\'{s}niewski, \emph{Total coordinate
  rings of resolutions of a quotient singularity}, 2013, in preparation.

\bibitem[DGPS12]{Singular}
Wolfram Decker, Gert-Martin Greuel, Gerhard Pfister, and Hans Sch{\"o}nemann,
  \emph{{\sc Singular} {3-1-5} --- {A}~computer algebra system for polynomial
  computations}, http://www.singular.uni-kl.de.

\bibitem[Don11]{mgr}
Maria Donten, \emph{On {Kummer} 3-folds}, Revista Matematica Complutense
  \textbf{24} (2011), 465--492.

\bibitem[DZ93]{invariants}
Dennis Deturck and Wolfgang Ziller, \emph{Spherical minimal immersions of
  spherical space forms}, Proceedings in Symposia in Pure Mathematics
  \textbf{54} (1993), 111--120.

\bibitem[FGAL11]{FGAL}
Laura Facchini, V\'{i}ctor Gonz\'{a}lez-Alonso, and Micha{\l} Laso\'{n},
  \emph{Cox rings of {D}u {V}al singularities}, Le Matematiche \textbf{66}
  (2011), 115--136.

\bibitem[GAP12]{gap}
The GAP~Group, \emph{{GAP -- Groups, Algorithms, and Programming, Version
  4.5.6}}, 2012.

\bibitem[HS10]{compl1}
J{\"u}rgen Hausen and Hendrik S{\"u}{\ss}, \emph{The {Cox} ring of an algebraic
  variety with torus action}, Advances in Mathematics \textbf{225} (2010),
  no.~2, 977--1012.

\bibitem[Kol93]{kollar}
J\'{a}nos Koll\'{a}r, \emph{Shafarevich maps and plurigenera of algebraic
  varieties}, Invent. Math. \textbf{113} (1993), 177--215.

\bibitem[Lun73]{LunaSlice}
Domingo Luna, \emph{Slices \'etales}, Bull. Soc. Math. de France \textbf{33}
  (1973), 81--105.

\bibitem[LV09]{surveyCox}
Antonio Laface and Mauricio Velasco, \emph{A survey on {Cox} rings}, Geometriae
  Dedicata \textbf{139} (2009), 269--287.

\bibitem[Rei]{duval}
Miles Reid, \emph{The {Du} {Val} singularities {$A_n, D_n, E_6, E_7, E_8$}},
  homepages.warwick.ac.uk/\~{}masda/surf/more/DuVal.pdf.

\bibitem[Rei97]{HJquot}
\bysame, \emph{Surface cyclic quotient singularities and {Hirzebruch-Jung}
  resolutions}, homepages.warwick.ac.uk/\~{}masda/surf/more/cyclic.pdf, 1997.

\bibitem[Rie77]{Riemenschneider}
Oswald Riemenschneider, \emph{Die {Invarianten} der endlichen {Untergruppen}
  von {GL(2,C)}}, Mathematische Zeitschrift \textbf{153} (1977), 37--50.

\bibitem[Sta79]{Stanley}
Richard~P. Stanley, \emph{Invariants of finite groups and their applications to
  combinatorics}, vol.~1, Bull. Amer. Math. Soc., 1979.

\bibitem[Stu93]{AlgInv}
Bernd Sturmfels, \emph{Algorithms in {Invariant} {Theory}}, Springer-Verlag,
  1993.

\bibitem[Sum74]{Sumihiro}
Hideyasu Sumihiro, \emph{Equivariant completion}, J. Math. Kyoto Univ.
  \textbf{14} (1974), 1--28.

\end{thebibliography}

\end{document}